\numberwithin{equation}{section} 
\numberwithin{figure}{section} 
\theoremstyle{plain}
\theoremstyle{plain}
\newtheorem{thm}{Theorem}
  \theoremstyle{plain}
  \newtheorem{prop}[thm]{Proposition}
  \theoremstyle{remark}
  \newtheorem*{acknowledgement*}{Acknowledgement}
  \theoremstyle{plain}
  \newtheorem{lem}[thm]{Lemma}
 \theoremstyle{definition}
  \newtheorem{example}[thm]{Example}
  \theoremstyle{plain}
  \newtheorem{cor}[thm]{Corollary}
  \theoremstyle{remark}
  \newtheorem{rem}[thm]{Remark}
  \theoremstyle{remark}
  \newtheorem{notation}[thm]{Notation}
\begin{document}
\newcommand{\Alb}{{\rm Alb}}

\newcommand{\Jac}{{\rm Jac}}

\newcommand{\Hom}{{\rm Hom}}

\newcommand{\End}{{\rm End}}

\newcommand{\Aut}{{\rm Aut}}

\newcommand{\NS}{{\rm NS}}

\title{Fano surfaces with $12$ or $30$ elliptic curves}

\author{Xavier Roulleau}

\maketitle
MSC: 14J29 (primary); 14C22, 14J50, 14J70 (secondary).

Key-words: Surface of general type, Fano surface of a cubic threefold,
Néron-Severi group, Maximal Picard number.

\subsection*{Introduction.}

A Fano surface is a surface of general type that parametrizes the
lines of a smooth cubic threefold. First studied by Fano and then
by many others, like Bombieri and Swinnerton-Dyer \cite{Bombieri},
Gherardelli \cite{Gherardelli}, Tyurin \cite{Tyurin}, \cite{Tyurin1},
Clemens and Griffiths \cite{Clemens}, Collino \cite{Collino}, these
surfaces carry many remarkable properties. In our previous paper \cite{Roulleau1},
we classified Fano surfaces according to the configurations of their
elliptic curves. The aim of the present paper is to give various applications
of this study when the Fano surface contains $12$ or $30$ elliptic
curves.\\
The main result of the first part of this paper is as follows:
\begin{prop}
\label{Corollaire revetement de degre 3}The Picard number $\rho_{S}$
of a Fano surface $S$ satisfies $1\leq\rho_{S}\leq25$ and is $1$
for $S$ generic. \\
A Fano surface that contains $12$ elliptic curves is a triple
ramified cover of the blow-up of $9$ points of an abelian surface.\\
 The Néron-Severi group of such a surface has rank $12,\,13$ or
$25=h^{1,1}(S)$.\\
 For $S$ generic among Fano surfaces with $12$ elliptic curves,
the Néron-Severi group has rank $12$ and is rationally generated
by its $12$ elliptic curves.\\
An infinite number of Fano surfaces with $12$ elliptic curves
have maximal Picard number $25=h^{1,1}(S)$.
\end{prop}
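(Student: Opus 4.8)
\emph{Proof proposal for the last statement.} The plan is to transfer the computation of $\rho_{S}$ to the Albanese variety $\Alb(S)$ — which, by Clemens--Griffiths, is the intermediate Jacobian of the cubic threefold of $S$ — and then to produce infinitely many such surfaces whose Albanese is isogenous to the fifth power of an elliptic curve with complex multiplication. For a Fano surface the cup-product induces an isomorphism of polarized Hodge structures between $\wedge^{2}H^{1}(S,\mathbb{Q})$ and $H^{2}(S,\mathbb{Q})$; since $H^{2}(\Alb(S),\mathbb{Q})=\wedge^{2}H^{1}(\Alb(S),\mathbb{Q})=\wedge^{2}H^{1}(S,\mathbb{Q})$, this identifies the spaces of Hodge classes and gives $\rho_{S}=\rho_{\Alb(S)}$. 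Now recall that an abelian variety $A$ of dimension $g$ satisfies $\rho_{A}\le g^{2}=h^{1,1}(A)$, with equality precisely when $A$ is isogenous to $E^{g}$ for an elliptic curve $E$ with complex multiplication (one identifies $\NS(A)_{\mathbb{Q}}$ with the Rosati-symmetric part of $\End^{0}(A)$ and compares dimensions). Since $h^{1,1}(\Alb(S))=25=h^{1,1}(S)$, this yields: $\rho_{S}=25$ if and only if $\Alb(S)$ is isogenous to $E^{5}$ for some CM elliptic curve $E$.

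Next I would exploit the structure obtained above: $S$ is a cyclic triple cover $\pi\colon S\to\widetilde{\mathcal{A}}$, with $\widetilde{\mathcal{A}}$ the blow-up of $9$ points of an abelian surface $\mathcal{A}$. Splitting $H^{1}(S,\mathbb{Q})$ under the order-$3$ generator of $\Aut(\pi)$ gives $\Alb(S)\sim\mathcal{A}\times P$ with $P$ an abelian threefold on which $\mathbb{Z}[\zeta_{3}]$ acts (the two non-trivial eigenspaces of $H^{1}(S)$ assemble into $H^{1}(P)$, of dimension $3$ and of non-trivial signature for this action). Hence, if $\Alb(S)\sim E^{5}$ with $E$ CM, then $\mathbb{Q}(\zeta_{3})\hookrightarrow\End^{0}(E)$, so $E$ is isogenous to the equianharmonic curve $E_{\omega}$ with $j=0$ (CM by $\mathbb{Q}(\sqrt{-3})$), and necessarily $\mathcal{A}\sim E_{\omega}^{2}$ and $P\sim E_{\omega}^{3}$.

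It remains to realize both conditions at once for infinitely many Fano surfaces. The period map of the family of Fano surfaces with $12$ elliptic curves records the pair $(\mathcal{A},P)$, together with its polarizations, the $\mathbb{Z}[\zeta_{3}]$-action on $P$ and the covering data, and so lands in a product of Shimura varieties: a moduli space of polarized abelian surfaces for $\mathcal{A}$, and a unitary ball quotient of abelian threefolds with the prescribed $\mathbb{Z}[\zeta_{3}]$-multiplication for $P$. In each factor the relevant special points — those with $\mathcal{A}\sim E_{\omega}^{2}$, and those with $P\sim E_{\omega}^{3}$, i.e. the CM points of maximal type with $\End^{0}(P)=M_{3}(\mathbb{Q}(\sqrt{-3}))$ — form a single Hecke orbit, which is infinite and Zariski-dense. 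Provided the period map of our family is dominant onto the relevant Shimura subvariety, infinitely many members satisfy both $\mathcal{A}\sim E_{\omega}^{2}$ and $P\sim E_{\omega}^{3}$, hence $\Alb(S)\sim E_{\omega}^{5}$ and, by the first step, $\rho_{S}=25$. Distinct such data give non-isomorphic cubic threefolds, hence, by the Torelli theorem, non-isomorphic Fano surfaces; so there are infinitely many.

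The main obstacle is exactly the dominance (or at least the ``non-avoidance'') in the last step: one must show that Fano surfaces with $12$ elliptic curves are flexible enough for their period image to meet these $\mathbb{Q}(\sqrt{-3})$-CM loci in infinitely many points, i.e. that $\mathcal{A}$ can be prescribed up to isogeny and $P$ forced to be a power of $E_{\omega}$ inside genuine triple covers of blown-up abelian surfaces that occur as Fano surfaces. The remaining ingredients — the isomorphism $H^{2}(S,\mathbb{Q})\cong\wedge^{2}H^{1}(S,\mathbb{Q})$, the bound $\rho_{A}\le(\dim A)^{2}$ and its equality case, and the density of Hecke/CM orbits on Shimura varieties — are standard.
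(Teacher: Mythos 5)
Your reduction to the Albanese variety is sound and is essentially the paper's route: $\rho_{S}=\rho_{\Alb(S)}$ (Theorem \ref{intersection de deux diviseurs}), the bound $\rho_{A}\leq g^{2}$ with equality exactly when $A\sim E^{5}$ for a CM elliptic curve $E$, and the isogeny decomposition $\Alb(S)\sim\mathcal{A}\times P$ coming from the order-$3$ covering automorphism. But the decisive step --- actually exhibiting infinitely many Fano surfaces with $12$ elliptic curves whose Albanese is isogenous to the fifth power of the $j=0$ curve --- is precisely the step you leave conditional (``provided the period map \dots is dominant''), so the proof does not close. Moreover, the picture you propose for closing it is not the right one. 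By Lemma \ref{lem:12 courbes ell caracterisent} every Fano surface with $12$ elliptic curves is isomorphic to a member $S_{\lambda}$ of the explicit one-parameter pencil $F_{\lambda}$, so the family is only one-dimensional and its period image cannot dominate a product of a three-dimensional moduli of polarized abelian surfaces with a ball quotient. In fact $P$ does not vary at all: the $\mathbb{Z}[\alpha]$-action on $H^{1}(P)$ has signature $(3,0)$ (the nine contracted curves $E_{ij}^{\beta}$, $1\leq i<j\leq3$, are all isomorphic to the equianharmonic curve $E_{0}$), which forces $P\sim E_{0}^{3}$ identically; and $\mathcal{A}$ is not an arbitrary abelian surface but $E_{\lambda}\times E_{\lambda}$ with $E_{\lambda}$ the Hesse cubic. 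The only condition left to arrange is therefore that $E_{\lambda}$ have complex multiplication by $\mathbb{Q}(\alpha)$.

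The paper fills this gap explicitly rather than by a dominance or equidistribution argument: by \cite{Roulleau1} one has $\Alb(S_{\lambda})\sim E_{0}^{3}\times E_{\lambda}^{2}$, the Hesse pencil realizes every $j$-invariant, and there are infinitely many $j$-invariants of elliptic curves with CM by an order of $\mathbb{Q}(\sqrt{-3})$; for each such $\lambda$ one gets $\Alb(S_{\lambda})\sim E_{0}^{5}$ and hence $\rho_{S_{\lambda}}=25$, and the surfaces are pairwise non-isomorphic because the $j$-invariant of $E_{\lambda}$ is read off from three of the twelve elliptic curves on $S_{\lambda}$ (no Torelli needed). If you replace your Shimura-variety paragraph by this explicit computation, your argument becomes complete and coincides with the paper's proof of Proposition \ref{il existe une infinit=0000E9 de NS de rang 25}.
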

Recall that among the K3 surfaces, the Kummer surfaces are recognized
as those K3 having $16$ disjoint $(-2)$-curves. They are the double
cover of the blow-up over the $2$-torsion points of an abelian surface
(see \cite{Nikulin}). Our theorem is the analogue for Fano surfaces
that contains 12 elliptic curves among Fano surfaces.

In the second part, we study the Fano surface $S$ of the Fermat cubic
threefold $F\hookrightarrow\mathbb{P}^{4}$:\[
F=\{x_{1}^{3}+x_{2}^{3}+x_{3}^{3}+x_{4}^{3}+x_{5}^{3}=0\}.\]
Let $\mu_{3}$ be the group of third roots of unity and let $\alpha\in\mu_{3}$
be a primitive root. For $s$ a point of $S$, we denote by $L_{s}$
the line on $F$ corresponding to the point $s$ and we denote by
$C_{s}$ the incidence divisor that parametrizes the lines in $F$
that cut the line $L_{s}$. 
\begin{thm}
The surface $S$ is the unique Fano surface that contains $30$ smooth
curves of genus $1$. These curves are numbered:\[
E_{ij}^{\beta},\,1\leq i<j\leq5,\,\beta\in\mu_{3},\]
 in such a way that for two such curves $E_{ij}^{\gamma}$ and $E_{st}^{\beta}$,
we have: \[
E_{ij}^{\beta}E_{st}^{\gamma}=\left\{ \begin{array}{cc}
1 & \textrm{if }\{i,j\}\cap\{s,t\}=\emptyset\\
-3 & \textrm{if }E_{ij}^{\beta}=E_{st}^{\gamma}\\
0 & \textrm{else}.\end{array}\right.\]
The Néron-Severi group $\NS(S)$ of $S$ has rank : $25=\dim H^{1}(S,\Omega_{S})$
and discriminant $3^{18}$. These $30$ elliptic curves generate an
index $3$ sub-lattice of $\NS(S)$ and with the class of an incidence
divisor $C_{s}$ ($s\in S$), they generate the Néron-Severi group.
\end{thm}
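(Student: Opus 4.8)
The plan is: (i) realize the $30$ genus-$1$ curves geometrically as the families of lines of $F$ through its Eckardt points; (ii) compute all their mutual intersection numbers; (iii) read the Picard number off the resulting Gram matrix; (iv) determine $\NS(S)$ as a lattice. For step (i), recall that a smooth genus-$1$ curve on a Fano surface is exactly the curve of lines of the cubic threefold through an \emph{Eckardt point} $p$, i.e.\ a point at which $T_pF\cap F$ degenerates to a cone over a smooth plane cubic; that cone is a hyperplane section of $F$ and the elliptic curve is its set of ruling lines. A direct computation shows that the Eckardt points of the Fermat threefold are the $30$ points $p_{ij}^\beta$ with exactly two nonzero coordinates $x_i,x_j$ subject to $x_i^3+x_j^3=0$ — one for each pair $\{i,j\}$ and each of the three cube roots — producing the $30$ curves $E_{ij}^\beta$. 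That these are \emph{all} the elliptic curves on $S$, and that $S$ is the unique Fano surface carrying $30$ of them, is the content of the classification of \cite{Roulleau1}, where $30$ is shown to be the maximum and to be attained only for $F$ Fermat.

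For the intersection numbers: if $E_{ij}^\beta\neq E_{st}^\gamma$, then $E_{ij}^\beta\cdot E_{st}^\gamma$ counts the lines of $F$ through both Eckardt points $p_{ij}^\beta,p_{st}^\gamma$; the only candidate is the line joining them, and substituting its parametrization into $\sum_k x_k^3$ shows this line lies on $F$ exactly when $\{i,j\}\cap\{s,t\}=\emptyset$ — where moreover the two curves meet transversally — and otherwise not, giving the values $1$ and $0$. For the self-intersection I use adjunction, $E^2=-E\cdot K_S$ since $g(E)=1$, together with $K_S\equiv 3C_s$ numerically (which follows from the classical values $K_S^2=45$, $C_s^2=5$, $K_S\cdot C_s=15$ by the Hodge index theorem) and $E\cdot C_s=1$ (a general line $L_s$ meets the hyperplane section $T_pF\cap F$ in a single point, hence lies in exactly one line of $F$ through $p$), so $E^2=-3$. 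With the full $30\times 30$ Gram matrix $G$ in hand one then computes $\mathrm{rk}\,G=25$; here the transitive action of $\Aut(F)\cong(\mu_3^5/\mu_3)\rtimes S_5$ on the $30$ curves makes $G$ equivariant and lets one work isotypically. Since $\mathrm{rk}\,G=25$ we get $25\le\rho_S\le h^{1,1}(S)=\dim H^1(S,\Omega_S)=25$ by Proposition~\ref{Corollaire revetement de degre 3}, hence $\rho_S=25$.

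Now let $\Lambda\subseteq\NS(S)$ be the sublattice generated by the $30$ classes. First I record the relation $\sum_{i<j,\,\beta}E_{ij}^\beta\equiv 6\,C_s$: both sides have self-intersection $180$ and meet $C_s$ in $30$, so their difference is orthogonal to $C_s$ with vanishing square and is numerically trivial by the Hodge index theorem. Hence $2[C_s]=\tfrac13\sum E_{ij}^\beta\in\NS(S)$, and one checks from the explicit relations among the $30$ classes that $\tfrac13\sum E_{ij}^\beta\notin\Lambda$; thus $\Lambda\subsetneq\langle\Lambda,C_s\rangle$ with $[\langle\Lambda,C_s\rangle:\Lambda]=3$. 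Computing the discriminant of $\Lambda$ — via the Smith normal form of $G$, or via a rank-$25$ subfamily together with the five relations among the $30$ curves — gives $|\mathrm{disc}\,\Lambda|=3^{20}$, hence $|\mathrm{disc}\langle\Lambda,C_s\rangle|=3^{18}$. It then remains to see that $\langle\Lambda,C_s\rangle$ is saturated in $H^2(S,\mathbb Z)$, i.e.\ equals $\NS(S)$. Since $3^{18}$ is a $p$-adic unit for every prime $p\neq 3$, the lattice $\langle\Lambda,C_s\rangle$ is automatically $p$-saturated there, so only $3$-saturation — equivalently $|\mathrm{disc}\,\NS(S)|=3^{18}$ — remains; for this I pass to the transcendental lattice $T(S)=\NS(S)^{\perp}\subset H^2(S,\mathbb Z)$. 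As $H^2(S,\mathbb Z)$ is unimodular one has $|\mathrm{disc}\,\NS(S)|=|\mathrm{disc}\,T(S)|$, and using the isomorphism $H^2(S,\mathbb Q)\cong\wedge^2H^1(S,\mathbb Q)$ coming from the Albanese together with the fact that the intermediate Jacobian of $F$ is isogenous to $E_0^5$, where $E_0$ is the Fermat cubic curve with $\End(E_0)=\mathbb Z[\alpha]$, one computes $|\mathrm{disc}\,T(S)|=3^{18}$. Combining everything: $\NS(S)=\langle\Lambda,C_s\rangle$, $[\NS(S):\Lambda]=3$, $|\mathrm{disc}\,\NS(S)|=3^{18}$, and the $30$ curves together with the class of one incidence divisor $C_s$ generate $\NS(S)$.

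The main obstacle is this last point. The rank, the intersection numbers and even $|\mathrm{disc}\,\Lambda|$ reduce to explicit coordinate computations and linear algebra with the configuration of $30$ curves; but proving that the sublattice generated by the $30$ curves and $C_s$ is already \emph{saturated} in $H^2(S,\mathbb Z)$ — equivalently that $|\mathrm{disc}\,\NS(S)|$ is exactly $3^{18}$ rather than a smaller power of $3$ — genuinely requires input beyond the configuration itself, namely the CM structure of the intermediate Jacobian of $F$ (or an equivalent computation of the transcendental lattice), since the discriminant form of the generated lattice on its own admits overlattices.
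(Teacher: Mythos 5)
Your realization of the $30$ curves, the intersection numbers, and the rank computation all track the paper (which delegates these to \cite{Roulleau1} via Proposition \ref{automor} and verifies $\mathrm{rk}\,\mathcal{I}=25$ directly), and your bookkeeping $\mathrm{disc}\,\Lambda=3^{20}$, $\Sigma\equiv 6C_s$, index $3$, $\mathrm{disc}=3^{18}$ is consistent with Theorem \ref{neronseveri}. The genuine gap is exactly where you locate it, but your proposed fix does not work as stated: you cannot compute $|\mathrm{disc}\,T(S)|$ (equivalently, verify that $\langle\Lambda,C_s\rangle$ is $3$-saturated) from the statement that the intermediate Jacobian is \emph{isogenous} to $E_0^5$. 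An isogeny class only determines $H^1(S,\mathbb{Q})$ and hence $T(S)\otimes\mathbb{Q}$; the discriminant of an integral lattice is not an isogeny invariant, and indeed the whole difficulty is to distinguish the true period lattice from finite-index neighbours (the paper stresses that $(A,\Theta)$ is \emph{not} isomorphic to a product of polarized elliptic curves, so the ``obvious'' lattice of $E_0^5$ is the wrong one). There is also a second integral subtlety you would have to track: $H^2(S,\mathbb{Z})_f$ is not $\wedge^2 H^1(A,\mathbb{Z})^*$ but an index-$2$ overlattice of $\vartheta^*H^2(A,\mathbb{Z})$ (Collino's exact sequence, Theorem \ref{intersection de deux diviseurs}\,b)), which changes discriminants by squares of $2$ and must be reconciled with your claim that everything away from $p=3$ is automatic.

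The paper closes this gap by a genuinely integral computation: Theorem \ref{le reseau de A} pins down $H_1(A,\mathbb{Z})$ exactly as one of six lattices squeezed between $\Lambda_0=\sum\mathbb{Z}[\alpha](e_i-\beta e_j)$ and its ``dual'' overlattice $\Lambda$, eliminating the wrong candidates by (i) integrality and unimodularity of the Pfaffian of $c_1(\Theta)$, (ii) the fact that $(A,\Theta)$ cannot decompose as a product of Jacobians, and (iii) a symmetry argument in $\alpha\mapsto\alpha^2$; it then computes $\NS(A)$ integrally via Rosati-symmetric endomorphisms and pushes forward through the exact sequence $0\to\NS(A)\to\NS(S)\to\mathbb{Z}/2\mathbb{Z}\to 0$ together with $\vartheta^*\Theta=2C_s$. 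If you want to salvage your transcendental-lattice route, you would still need the exact lattice $H_1(A,\mathbb{Z})$ (not the isogeny class) as input, at which point you have reproduced the paper's hard step; so as written the final saturation claim is unproven.
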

Given a smooth curve of low genus and with a sufficiently large automorphism
group, it is sometimes possible to calculate the period matrix of
its Jacobian \cite{Birkenhake}. In this paper, we calculate also
the period lattice of the Albanese variety of the $2$ dimensional
variety $S$. This computation is used to determine the Néron-Severi
group of $S$. We determine also the fibrations of $S$ onto an elliptic
curve and the intersection numbers between the fibers of these fibrations
and discuss on the more interesting fibrations. 
\begin{acknowledgement*}
A part of this paper done during the author's stay at the Max Planck
Institute of Bonn and at the University of Tokyo, under JSPS fellowship. 
\end{acknowledgement*}

\section{Preliminaries on Fano surfaces.}

\subsection{Tangent Bundle Theorem.\label{parag. proprietes des SdF.}}

Let $F\hookrightarrow\mathbb{P}^{4}$ be a smooth cubic threefold
and let $S$ be its Fano surface of lines. We consider the following
diagram : \[
\begin{array}{ccc}
\mathcal{U} & \stackrel{\psi}{\rightarrow} & F\hookrightarrow\mathbb{P}^{4}\\
\pi\downarrow\\
S\end{array}\]
where $\mathcal{U}$ is the universal family of lines and $\pi,\psi$
are the projections. 
\begin{thm}
(Tangent Bundle Theorem \cite{Clemens}). There is an isomorphism
: $\pi_{*}\psi^{*}\mathcal{O}(1)\simeq\Omega_{S}$, where $\Omega_{S}$
is the cotangent sheaf. \\
By this isomorphism, we can identify the spaces $H^{0}(F,\mathcal{O}(1))$
and $H^{0}(S,\Omega_{S})$, the varieties $\mathbb{P}^{4}$ and $\mathbb{P}(H^{0}(S,\Omega_{S})^{*})$
and the varieties $\mathcal{U}$ and $\mathbb{P}(T_{S})$, where $T_{S}=\Omega_{S}^{*}$. 
\end{thm}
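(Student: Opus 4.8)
The plan is to reduce the statement to a single natural isomorphism of rank-two bundles on $S$ and to prove that isomorphism by realizing $S$ as a zero locus inside the Grassmannian $G=G(2,5)$ of lines in $\mathbb{P}^{4}=\mathbb{P}(V)$, where $V=H^{0}(F,\mathcal{O}(1))^{*}$. Let $0\to\mathcal{S}\to V\otimes\mathcal{O}_{G}\to\mathcal{Q}\to0$ be the tautological sequence, so that the fibre of $\mathcal{S}$ over $[L]$ is the plane $V_{L}\subset V$ with $L=\mathbb{P}(V_{L})$. First I would check that the universal family is $\mathcal{U}=\mathbb{P}(\mathcal{S}|_{S})$ and that the inclusion $\mathcal{S}\hookrightarrow V\otimes\mathcal{O}$ identifies $\psi^{*}\mathcal{O}(1)$ with the relative hyperplane bundle. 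Pushing forward along the $\mathbb{P}^{1}$-bundle $\pi$ then yields a bundle whose fibre at $[L]$ is $H^{0}(L,\mathcal{O}_{L}(1))=V_{L}^{*}$, namely $\pi_{*}\psi^{*}\mathcal{O}(1)\cong\mathcal{S}^{*}|_{S}$. Thus the cotangent isomorphism to be proved becomes the assertion $T_{S}\cong\mathcal{S}|_{S}$, or dually $\Omega_{S}\cong\mathcal{S}^{*}|_{S}$.

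Next I would exhibit $T_{S}$ as a kernel. The cubic form $f\in\mathrm{Sym}^{3}V^{*}$ defining $F$ restricts, plane by plane, to a section $\sigma_{f}\in H^{0}(G,\mathrm{Sym}^{3}\mathcal{S}^{*})$ whose zero locus is exactly $S$. Since $F$ is smooth, $S$ is smooth of the expected dimension $\dim G-\mathrm{rk}\,\mathrm{Sym}^{3}\mathcal{S}^{*}=6-4=2$, so the section is regular and there is an exact sequence $0\to T_{S}\to T_{G}|_{S}\xrightarrow{d\sigma_{f}}\mathrm{Sym}^{3}\mathcal{S}^{*}|_{S}\to0$, whence $T_{S}=\ker d\sigma_{f}$. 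Using $T_{G}=\Hom(\mathcal{S},\mathcal{Q})$ I would compute $d\sigma_{f}$ from the polarization $P$ of $f$: a tangent vector $\phi\colon V_{L}\to V/V_{L}$ is sent to the cubic $v\mapsto P(v,v,\widetilde{\phi}(v))$ on $V_{L}$, which is well defined because $f|_{V_{L}}=0$ forces $P$ to vanish whenever an argument lies in $V_{L}$.

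The heart of the proof is the canonical isomorphism $\ker d\sigma_{f}\cong\mathcal{S}|_{S}$. Encoding the second polar of $f$ along $L$ as a second fundamental form $\Pi\in\mathrm{Sym}^{2}\mathcal{S}^{*}\otimes\mathcal{Q}^{*}$ (up to an $\mathcal{O}_{G}(1)$-twist), the map $d\sigma_{f}$ factors as contraction against $\Pi$ followed by the multiplication $\mathcal{S}^{*}\otimes\mathrm{Sym}^{2}\mathcal{S}^{*}\to\mathrm{Sym}^{3}\mathcal{S}^{*}$. For a smooth cubic, $\Pi$ is an isomorphism $\mathcal{Q}\xrightarrow{\sim}\mathrm{Sym}^{2}\mathcal{S}^{*}$ precisely along the lines of the first type, and there $\ker d\sigma_{f}$ is identified with the kernel of the multiplication map, which for a rank-two bundle is canonically isomorphic to $\mathcal{S}$ up to a twist by a power of $g=\mathcal{O}_{G}(1)|_{S}$. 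Hence $T_{S}\cong\mathcal{S}|_{S}\otimes\lambda$ with $\lambda$ a power of $g$; comparing first Chern classes through the normal sequence, $c_{1}(T_{S})=c_{1}(T_{G})|_{S}-6\,c_{1}(\mathcal{S}^{*})|_{S}=5g-6g=-g=c_{1}(\mathcal{S}|_{S})$, forces $\lambda=\mathcal{O}_{S}$. I expect the main obstacle to be the lines of the second type, where $\Pi$ drops rank and the identification $\mathcal{Q}\cong\mathrm{Sym}^{2}\mathcal{S}^{*}$ degenerates: since these form a curve in $S$, extending the isomorphism across them is not automatic, and one argues directly from the splitting $N_{L/F}=\mathcal{O}_{L}(1)\oplus\mathcal{O}_{L}(-1)$ that the comparison map stays bijective there. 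Taking global sections of $\Omega_{S}\cong\mathcal{S}^{*}|_{S}$ finally identifies $H^{0}(S,\Omega_{S})$ with $H^{0}(G,\mathcal{S}^{*})=V^{*}=H^{0}(F,\mathcal{O}(1))$ (both of dimension $5$, the restriction being injective since no nonzero linear form vanishes on every line contained in $F$), whence $\mathbb{P}^{4}=\mathbb{P}(V)=\mathbb{P}(H^{0}(S,\Omega_{S})^{*})$ and $\mathcal{U}=\mathbb{P}(\mathcal{S}|_{S})=\mathbb{P}(T_{S})$, as claimed.
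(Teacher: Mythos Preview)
The paper does not prove this theorem at all: it is quoted as the Tangent Bundle Theorem of Clemens--Griffiths with a bare citation, and everything that follows uses it as a black box. So there is no ``paper's own proof'' to compare against; your proposal is an attempt to supply the argument that the paper outsources to \cite{Clemens}.

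Your reduction is correct and standard: identifying $\mathcal{U}$ with $\mathbb{P}(\mathcal{S}|_{S})$ and pushing forward the relative $\mathcal{O}(1)$ gives $\pi_{*}\psi^{*}\mathcal{O}(1)\cong\mathcal{S}^{*}|_{S}$, so the whole statement becomes $T_{S}\cong\mathcal{S}|_{S}$. Realizing $S$ as the zero locus of $\sigma_{f}\in H^{0}(G,\mathrm{Sym}^{3}\mathcal{S}^{*})$ and writing $T_{S}=\ker d\sigma_{f}$ is also exactly how the literature proceeds.

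The gap is in the ``heart'' paragraph. From the factorization of $d\sigma_{f}$ through $\mathrm{id}\otimes\Pi'$ and the multiplication map you only obtain, on the open set $U$ of lines of the first type, an isomorphism $T_{S}|_{U}\cong\ker(\mathrm{mult})|_{U}$. For a rank-two bundle the Koszul relation gives $\ker(\mathcal{S}^{*}\otimes\mathrm{Sym}^{2}\mathcal{S}^{*}\to\mathrm{Sym}^{3}\mathcal{S}^{*})\cong\det\mathcal{S}^{*}\otimes\mathcal{S}^{*}\cong\mathcal{S}\otimes\mathcal{O}_{G}(2)$, whose first Chern class is $3g$, not $-g$; so even on $U$ you are not getting $\mathcal{S}$ but a twist of it, and the isomorphism you produce is only between restrictions to an open set. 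An isomorphism of locally free sheaves on a dense open does \emph{not} force a global isomorphism, nor does it force the two bundles to differ by a line bundle: your ``Hence $T_{S}\cong\mathcal{S}|_{S}\otimes\lambda$'' is exactly the step that fails. The Chern-class check $c_{1}(T_{S})=-g=c_{1}(\mathcal{S}|_{S})$ is correct but by itself proves nothing about the bundle.

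What is missing is a \emph{global} morphism of sheaves between $\mathcal{S}|_{S}$ and $T_{S}$ that you can then test for bijectivity fibre by fibre (including at lines of the second type). In Clemens--Griffiths this comes from deformation theory: one identifies $T_{S,[L]}$ canonically with $H^{0}(L,N_{L/F})$ and produces a natural map to or from $V_{L}$ via the normal-bundle sequences, globalizing to a morphism of vector bundles on $S$. Once such a morphism exists, your pointwise analysis at first- and second-type lines (using $N_{L/F}\cong\mathcal{O}\oplus\mathcal{O}$ or $\mathcal{O}(1)\oplus\mathcal{O}(-1)$) finishes the job. Without it, the argument does not close.
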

We always work with the identifications of the above Theorem. In particular,
the line $L_{s}\hookrightarrow F$ corresponding to a point $s$ in
$S$ is the projectivized tangent space to $S$ at $s$.

\subsection{Properties of Fano surfaces with an elliptic curve.}

Let us denote by $\mathcal{C}$ a cone on the cubic $F$. The following
lemmas come from \cite{Roulleau1}:
\begin{lem}
\label{lem:The-base-of}The cone $\mathcal{C}$ is an hyperplane section
of $F$. The curve $E$ parametrizing the lines on $\mathcal{C}$
is naturally embedded into $S$ and is an elliptic curve. \\
Conversely, if $E\hookrightarrow S$ is an elliptic curve on $S$,
the surface $\psi(\pi^{-1}(E))$ is a cone. 
\end{lem}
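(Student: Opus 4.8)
The plan is to establish the two implications separately, beginning with the converse, which follows at once from the Tangent Bundle Theorem. So let $E\hookrightarrow S$ be a smooth curve of genus one, so that $\Omega_E\simeq\mathcal{O}_E$. The conormal exact sequence $0\to N_{E/S}^{*}\to\Omega_S|_E\to\Omega_E\to 0$ exhibits a line-bundle quotient $q\colon\Omega_S|_E\twoheadrightarrow\mathcal{O}_E$; dually $q$ is the inclusion of the tangent line $T_E\hookrightarrow T_S|_E$, so it corresponds to a section $\sigma\colon E\to\mathbb{P}(T_S)|_E=\pi^{-1}(E)$ whose value at $s$ is the point of the line $L_s=\mathbb{P}(T_{S,s})$ cut out by the tangent direction to $E$ at $s$. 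By the Tangent Bundle Theorem $\psi^{*}\mathcal{O}(1)$ restricts to $\mathcal{O}_{L_s}(1)$ on each fibre of $\pi$, so it agrees with $\mathcal{O}_{\mathbb{P}(T_S)}(1)$ up to a twist by a line bundle pulled back from $S$; that twist is trivial because $\pi_{*}$ of either sheaf is $\Omega_S$, so $\psi^{*}\mathcal{O}(1)=\mathcal{O}_{\mathbb{P}(T_S)}(1)$ and $\sigma^{*}\psi^{*}\mathcal{O}(1)$ is precisely the quotient bundle $\mathcal{O}_E$. Therefore $\psi\circ\sigma\colon E\to\mathbb{P}^{4}$ pulls $\mathcal{O}(1)$ back to a degree-zero line bundle, hence is constant, say with value $p\in F$. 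Since $\psi$ maps each fibre $\pi^{-1}(s)$ isomorphically onto $L_s$, this means $p\in L_s$ for every $s\in E$, so $\psi(\pi^{-1}(E))=\bigcup_{s\in E}L_s$ is a union of lines through the fixed point $p$, that is, a cone; it is a surface because $s\mapsto L_s$ is injective and infinitely many distinct lines cannot lie on one curve.

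For the direct implication, let $\mathcal{C}\subset F$ be an irreducible cone with vertex $p\in F$. Each ruling line of $\mathcal{C}$ lies on $F$ and passes through $p$, hence lies in the tangent hyperplane $H:=T_pF$, so $\mathcal{C}\subseteq H\cap F$, a cubic surface. I first note that $S$ carries no rational curve: $\Omega_S$ is globally generated by the Tangent Bundle Theorem, since a linear form on $\mathbb{P}^{4}$ restricts surjectively to each line $L_s$; and for a non-constant $f\colon\mathbb{P}^{1}\to S$ the bundle $f^{*}\Omega_S$ would be globally generated, hence a sum of line bundles of nonnegative degree, contradicting the nonvanishing of $df\colon\mathcal{O}_{\mathbb{P}^{1}}(2)\to f^{*}T_S$. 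In particular $F$ contains no plane and no quadric cone, as these would carry a $\mathbb{P}^{2}$, respectively a $\mathbb{P}^{1}$, of lines. Hence $\mathcal{C}$, a cone of degree at most $3$ contained in the cubic surface $H\cap F$ and being neither a plane (degree $1$) nor a quadric cone (degree $2$), has degree $3$ and equals $H\cap F$; projecting $\mathcal{C}$ from $p$ inside $H\simeq\mathbb{P}^{3}$ identifies it with the cone over a reduced irreducible plane cubic $C$. The lines on $\mathcal{C}$ are exactly its ruling lines, parametrized bijectively by $C$, which gives a morphism $C\to S$ with image a curve $E$; and $C$ is \emph{smooth}, since a nodal or cuspidal plane cubic is rational and would produce a rational curve on $S$. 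Thus $E\cong C$ is a smooth plane cubic, i.e.\ an elliptic curve, and one checks that $C\to S$ is a closed immersion: a ruling line $L$ deforms on $F$ keeping $p$ fixed, so it is of the second type, $N_{L/F}\simeq\mathcal{O}_L(1)\oplus\mathcal{O}_L(-1)$, whence $H^{0}(N_{L/F}(-p))$ is one-dimensional and the Kodaira--Spencer map of the family $\{L_c\}_{c\in C}$ is everywhere injective.

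The step I expect to be the main obstacle is this last one, upgrading ``$E$ is a reduced irreducible plane cubic'' to ``$E$ is a smooth elliptic curve embedded in $S$''. The smoothness is forced by the absence of rational curves on $S$, which I deduce from the global generation of $\Omega_S$ given by the Tangent Bundle Theorem; the embeddedness rests on the normal-bundle computation just sketched, which may alternatively be quoted from \cite{Clemens} or \cite{Roulleau1}. The remainder is formal: the converse is bookkeeping with the Tangent Bundle Theorem, and the reduction of an arbitrary cone in $F$ to a tangent hyperplane section is a degree count.
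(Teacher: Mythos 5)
Your proof is correct, and the paper itself offers no argument here (the lemma is quoted from \cite{Roulleau1}); your two-step reconstruction --- the trivial quotient $\Omega_S|_E\twoheadrightarrow\Omega_E\simeq\mathcal{O}_E$ giving a section of $\pi$ contracted by $\psi$ for the converse, and the tangent-hyperplane degree count plus absence of rational curves on $S$ for the direct part --- is essentially the standard proof in the cited reference and in Clemens--Griffiths. Two points are stated slightly faster than they deserve but are harmless: $\pi_*(\psi^*\mathcal{O}(1))\simeq\Omega_S$ only pins down $\psi^*\mathcal{O}(1)$ up to a $2$-torsion twist from $S$, which still has degree $0$ on $E$ and so does not affect the conclusion; and the embeddedness of $E$ is cleanest phrased as: the incidence subscheme $\{s\in S: p\in L_s\}$ has $1$-dimensional tangent space $H^0(N_{L/F}(-p))$ at each point, hence is a smooth curve, onto which the bijection from $C$ is then an isomorphism.
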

Let $E\hookrightarrow S$ be an elliptic curve. 
\begin{lem}
\label{lem:construction de la fibration}We can canonically associate
to $E\hookrightarrow S$ an involution $\sigma_{E}:S\rightarrow S$
and a fibration $\gamma_{E}:S\rightarrow E$. 
\end{lem}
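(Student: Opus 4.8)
The plan is to read off both $\sigma_{E}$ and $\gamma_{E}$ from the cone $\mathcal{C}=\psi(\pi^{-1}(E))$ attached to $E$ by Lemma~\ref{lem:The-base-of}; since $\mathcal{C}$, and hence its vertex, depends only on $E\hookrightarrow S$, the construction will automatically be canonical. Write $\mathcal{C}=F\cap H$ with $H\subset\mathbb{P}^{4}$ a hyperplane and let $v$ be the vertex. As $F$ is smooth, $T_{v}F$ is a $\mathbb{P}^{3}$; and since $\mathcal{C}\subset F$ is a cubic cone spanning $H$, one has $H=T_{v}\mathcal{C}\subseteq T_{v}F$, whence $H=T_{v}F$. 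A line of $F$ lies in $H$ exactly when it lies on $\mathcal{C}$, and the base of the cone being the smooth plane cubic $E$, the lines on $\mathcal{C}$ are precisely the rulings $\overline{vp}$, $p\in E$; hence $L_{s}\subset H$ if and only if $s\in E$.

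Construction of $\gamma_{E}$. For $s\notin E$ the point $a_{s}:=L_{s}\cap H$ is a single point of $F\cap H=\mathcal{C}$, and $a_{s}\neq v$ (otherwise $v\in L_{s}$ would make $L_{s}$ a ruling); so $a_{s}$ lies on a unique ruling $R_{s}:=\overline{v\,a_{s}}$, i.e.\ defines a point of $E$, and I set $\gamma_{E}(s)$ to be that point. On $S\setminus E$ this is a morphism: $s\mapsto a_{s}$ is $\psi$ composed with the section of $\pi$ cut out on $\mathcal{U}$ by the divisor $\psi^{-1}(H)$, followed by the projection $\mathcal{C}\setminus\{v\}\to E$ of the cone onto its base. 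Since $E$ has genus $1$ it carries no rational curve, so the rational map $\gamma_{E}:S\dashrightarrow E$ has no point of indeterminacy and extends to a morphism $S\to E$; it is surjective, one checks that it restricts to the identity on $E$, and its general fibre consists of the lines of $F$ meeting a fixed ruling of $\mathcal{C}$. This is the fibration.

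Construction of $\sigma_{E}$. Keep $s\notin E$ and put $P_{s}:=\langle v,L_{s}\rangle$, a plane not contained in $H$. Because $\mathcal{C}$ is a cone and $a_{s}\in L_{s}\subset P_{s}$, the ruling $R_{s}=\overline{v\,a_{s}}$ lies in $P_{s}$; and since a smooth cubic threefold contains no plane, $F\cap P_{s}$ is a plane cubic containing the two distinct lines $L_{s}$ and $R_{s}$, hence equals $L_{s}\cup R_{s}\cup\ell_{s}'$ for a third line $\ell_{s}'\subset F$, and I set $\sigma_{E}(s):=[\ell_{s}']\in S$. On the dense open set of those $s$ for which $L_{s},R_{s},\ell_{s}'$ are pairwise distinct and $v\notin\ell_{s}'$, one has $\langle v,\ell_{s}'\rangle=P_{s}$, the unique ruling of $\mathcal{C}$ in this plane is still $R_{s}$, and $\ell_{s}'\cap H\in P_{s}\cap H=R_{s}$; consequently $\gamma_{E}(\sigma_{E}(s))=\gamma_{E}(s)$, while the line of $F\cap P_{\sigma_{E}(s)}$ residual to $\ell_{s}'$ and $R_{s}$ is $L_{s}$, that is $\sigma_{E}(\sigma_{E}(s))=s$. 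Thus $\sigma_{E}$ is a birational self-map of $S$ which is generically an involution; since a Fano surface is a minimal surface of general type, every birational self-map of $S$ is biregular, so $\sigma_{E}\in\Aut(S)$, and (two morphisms agreeing on a dense open set being equal) $\sigma_{E}^{2}=\mathrm{id}$ and $\gamma_{E}\circ\sigma_{E}=\gamma_{E}$ hold on all of $S$.

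The crux of the argument is the geometric fact that the plane spanned by $v$ and a line $L_{s}\subset F$ not through $v$ automatically contains the ruling through $L_{s}\cap H$, so that $F\cap\langle v,L_{s}\rangle$ is a triangle of lines one side of which is a ruling; the rest is either formal — the two extensions, justified respectively by ``$E$ has genus $1$'' and by ``birational self-maps of a minimal surface of general type are biregular'' — or a routine check that these three lines are generically distinct with $v$ off the residual one. The one point I would treat with care is precisely that genericity, since it rests on $\mathcal{C}$ being an honest cone over a smooth cubic rather than a more degenerate surface, a hypothesis guaranteed by Lemma~\ref{lem:The-base-of}. To call $\gamma_{E}$ a fibration in the strict sense one moreover wants its general fibre connected, which comes from identifying that fibre with the étale double cover of the discriminant quintic of the conic bundle on $F$ obtained by projecting from a ruling of $\mathcal{C}$.
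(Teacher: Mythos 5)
Your construction coincides with the one the paper merely recalls (and attributes to \cite{Roulleau1}): $\gamma_{E}(s)$ is the ruling of the cone through the point $L_{s}\cap H$, and $\sigma_{E}(s)$ is the residual line of $L_{s}\cup L_{\gamma_{E}s}$ in the plane they span. Your supporting checks --- that $H=T_{v}F$, so that $L_{s}\subset H$ if and only if $s\in E$; that $P_{s}\cap H=R_{s}$, which yields $\gamma_{E}\circ\sigma_{E}=\gamma_{E}$ and $\sigma_{E}^{2}=\mathrm{id}$ generically; and the extension of $\gamma_{E}$ because $E$ carries no rational curve and of $\sigma_{E}$ because $S$ is minimal of general type (more directly: $S$ embeds in its Albanese variety, hence contains no rational curve at all) --- are correct and supply details the paper leaves to the reference.

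One assertion is false, although nothing you need rests on it: $\gamma_{E}$ does \emph{not} restrict to the identity on $E$. By Lemma~\ref{lem:The-differentials-}, the differential $d\Gamma_{E}$ acts on $T_{E}$ as $-I-d\Sigma_{E}|_{T_{E}}=-2I$, so $\gamma_{E}|_{E}$ is, up to translation, multiplication by $-2$, a map of degree $4$; equivalently, Lemma~\ref{lem:equivalence num de la fibre} gives $E\cdot\gamma_{E}^{*}s=E(C_{s}-E)=1+3=4$, so $E$ is a $4$-section of $\gamma_{E}$, not a section. Surjectivity, which is all you used this claim for, already follows from non-constancy. Finally, the two points you yourself flag --- the genericity ensuring that $L_{s}$, $R_{s}$, $\ell_{s}'$ are pairwise distinct with $v\notin\ell_{s}'$, and the connectedness of the general fibre --- are where the remaining substance lies; the paper does not prove them either, deferring to \cite{Roulleau1}, so apart from the erroneous remark above your argument is at the same level of completeness as the source it reconstructs.
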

Let us recall the construction of $\sigma_{E}$ and $\gamma_{E}$
: \\
For a generic point $s$ of $S$, the line $L_{s}$ cuts the hyperplane
section $\psi(\pi^{-1}(E))$ $ $ into a point $p_{s}$. The line
between $p_{s}$ and the vertex of the cone $\psi(\pi^{-1}(E))$ lies
inside this cone and is represented by a point $\gamma_{E}s$ in $E$.
The lines $L_{s}$ and $L_{\gamma_{E}s}$ lies on a plane, that plane
cuts the cubic into a third residual line denoted by $L_{\sigma_{E}s}$.

Let $s$ is a point of $E$ and let $C_{s}$ the incident divisor
parametrizing the lines in $F$ that cut $L_{s}$.
\begin{lem}
\label{lem:equivalence num de la fibre}The fiber $\gamma_{E}^{*}s$
satisfies : $C_{s}=\gamma_{E}^{*}s+E$. We have : $C_{s}^{2}=5$ ,
$C_{s}E=1$ and $E^{2}=-3$. 
\end{lem}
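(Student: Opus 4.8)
The plan is to first determine, for a point $s\in E$, the components of the incidence divisor $C_s$, obtaining the identity $C_s=\gamma_{E}^{*}s+E$, and then to extract the three intersection numbers from this identity together with adjunction and the classical numerical relation $K_S\equiv 3\,C_t$ on Fano surfaces.

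First I would analyse which lines of $F$ meet $L_s$. Let $v$ be the vertex of the cone $\mathcal{C}=\psi(\pi^{-1}(E))$ and let $H\cong\mathbb{P}^{3}$ be the hyperplane with $\mathcal{C}=F\cap H$ (Lemma~\ref{lem:The-base-of}); since $s\in E$, the line $L_s$ lies on $\mathcal{C}$ and passes through $v$. If $L_t\subset\mathcal{C}$, then $L_t$ is a ruling of the cone, hence passes through $v\in L_s$, so every point of $E\hookrightarrow S$ lies on $C_s$ and $E$ is a component of $C_s$. If $L_t\not\subset\mathcal{C}$, then $v\notin L_t$ (a line of $F$ through $v$ lies in the tangent hyperplane $T_vF=H$, hence in $F\cap H=\mathcal{C}$), and $L_t\cap H$ is a single point $q\neq v$ lying in $F\cap H=\mathcal{C}$; as $L_s\subset H$, every point of $L_t\cap L_s$ lies in $L_t\cap H=\{q\}$, so $L_t$ meets $L_s$ precisely when $q\in L_s$, i.e. when the ruling $\overline{vq}$ of $\mathcal{C}$ equals $L_s$, i.e. when $\gamma_E(t)=s$ by the description of $\gamma_E$ recalled after Lemma~\ref{lem:construction de la fibration}. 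Thus, set-theoretically, $C_s=E\cup\gamma_E^{-1}(s)$, and a local check that $E$ occurs in $C_s$ with multiplicity one and that $C_s$ coincides away from $E$ with the whole fibre $\gamma_E^{*}s$ upgrades this to $C_s=\gamma_E^{*}s+E$.

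Next I would compute the intersection numbers. The incidence divisors $C_t$, $t\in S$, are the fibres of the incidence correspondence $\mathcal{I}\subset S\times S$ over the first factor, so they form an algebraic family over the connected surface $S$ and are therefore all algebraically equivalent; in particular $C_s\cdot E=C_t\cdot E$ for $t$ generic. A general $L_t$ is not on $\mathcal{C}$, and by the previous paragraph the unique ruling of $\mathcal{C}$ meeting $L_t$ is $\overline{vq}$ with $\{q\}=L_t\cap H$; this intersection is transverse for general $t$, so $C_t\cdot E=1$, whence $C_s\cdot E=1$. Adjunction applied to the smooth elliptic curve $E$ on the smooth surface $S$ gives $E^{2}+K_S\cdot E=2g(E)-2=0$, and combining this with the relation $K_S\equiv 3\,C_t$ (Clemens--Griffiths, \cite{Clemens}) yields $K_S\cdot E=3\,C_t\cdot E=3$, hence $E^{2}=-3$. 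Finally two fibres of $\gamma_E$ over distinct points are disjoint, so $(\gamma_E^{*}s)^{2}=0$, and squaring $C_s=\gamma_E^{*}s+E$ gives $C_s^{2}=2\,(\gamma_E^{*}s)\cdot E+E^{2}=2\,(C_s\cdot E-E^{2})+E^{2}=2\,C_s\cdot E-E^{2}=2\cdot 1-(-3)=5$ (equivalently $C_s^{2}=C_t^{2}=5$ by algebraic equivalence).

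The routine inputs here are adjunction, the algebraic equivalence of the incidence divisors $C_t$, and the classical identity $K_S\equiv 3\,C_t$; the genuinely delicate point is the scheme-theoretic refinement in the second step, namely checking that $E$ enters $C_s$ with multiplicity exactly one and that, away from $E$, the divisor $C_s$ equals the full reduced fibre $\gamma_E^{-1}(s)$ and not a proper subscheme or a multiple of it. This requires a local study of the incidence correspondence $\mathcal{I}$ along the rulings of $\mathcal{C}$ and near the vertex $v$; granting it, all the numerical claims follow formally.
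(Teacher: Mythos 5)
This lemma is not proved in the paper: it is one of the statements imported verbatim from \cite{Roulleau1} (``The following lemmas come from \cite{Roulleau1}''), so there is no in-paper argument to compare yours with. That said, your route --- determine the components of $C_s$ by analysing which lines of $F$ meet a ruling of the cone $\mathcal{C}=F\cap H$, then extract the three numbers from adjunction and $K_S\equiv 3C_t$ --- is the natural one, and the set-theoretic part is correct and cleanly argued: the observation that $H=T_vF$, so that any line of $F$ through $v$ lies on $\mathcal{C}$, and the identification of the residual locus with $\gamma_E^{-1}(s)$ via $q=L_t\cap H$, are exactly right.

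As written, however, the proof has two genuine holes, and together they carry all three intersection numbers. The first you flag yourself: the passage from $\mathrm{Supp}(C_s)=E\cup\gamma_E^{-1}(s)$ to the divisor identity $C_s=\gamma_E^{*}s+E$ (multiplicity one for $E$, residual equal to the full scheme-theoretic fibre) is deferred to an unspecified ``local check''. This is not cosmetic: your value $C_s^{2}=5$ is deduced from that identity, so it inherits the gap unless you instead import $C_s\cdot C_t=5$ from \cite{Clemens}, which you mention only parenthetically; and a purely numerical argument (Zariski's lemma on the part of $C_s$ supported in the fibre) does not by itself pin down the multiplicity of $E$, so some local analysis of the incidence scheme along $E$ really is needed. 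The second hole is the bare assertion that for general $t$ the unique point of $C_t\cap E$ is a transverse intersection, which is what gives $C_t\cdot E=1$ and hence, via adjunction, $E^{2}=-3$. This can be repaired using the Tangent Bundle Theorem: the tangent direction of $C_t$ at a point $u$ is the point $L_u\cap L_t$ of the line $L_u=\mathbb{P}(T_{S,u})$, whereas the tangent direction of $E$ at any of its points is the vertex $v$ of the cone; since $q=L_t\cap H$ is distinct from $v$ for $t$ generic, the two directions differ and the intersection is transverse. Without some such argument the chain $C_t\cdot E=1\Rightarrow E^{2}=-3\Rightarrow C_s^{2}=5$ is not closed. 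In short: right strategy and correct geometry where you carry it out, but the two multiplicity statements on which the numerics rest are asserted rather than proved.
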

Let $A$ be the Albanese variety of the Fano surface $S$ ; its tangent
space is $H^{0}(S,\Omega_{S})^{*}$. We denote by $\vartheta:S\rightarrow A$
the Albanese map. As $\vartheta$ is an embedding \cite{Clemens},
we consider $S$ as a subvariety of $A$. To the morphisms $\sigma_{E},\,\gamma_{E}$
correspond an involution $\Sigma_{E}:A\rightarrow A$ of $A$ and
a morphism $\Gamma_{E}:A\rightarrow E$ such that: $\vartheta\circ\sigma_{E}=\Sigma_{E}\circ\vartheta$
and $\Gamma_{E}\circ\vartheta=\vartheta\circ\gamma_{E}$.
\begin{lem}
\label{lem:The-differentials-}(\cite{Roulleau1}, Lemma $29$). The
differentials $d\Sigma_{E}$ and $d\Gamma_{E}$ of $\Sigma_{E}$ and
$\Gamma_{E}$ are endomorphisms of $H^{0}(S,\Omega_{S})^{*}$, they
satisfy : \[
I+d\Sigma_{E}+d\Gamma_{E}=0\]
 where I is the identity. The eigenspace of the eigenvalue $1$ of
the involution $d\Sigma_{E}$ is the tangent space $T_{E}$ of the
curve $E\hookrightarrow A$ (translated in $0$). 
\end{lem}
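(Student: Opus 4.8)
My plan is to read the asserted identity $I+d\Sigma_{E}+d\Gamma_{E}=0$ as the differential at the origin of an identity between \emph{morphisms} of the abelian variety $A$. Recall that the morphisms $A\to A$ form a group under the pointwise addition coming from the group law of $A$, that every such morphism factors as a translation composed with a homomorphism, and that its differential at $0$ equals the differential of that homomorphism part and is additive in the morphism. Writing $\Gamma_{E}$ also for the composite $A\to E\hookrightarrow A$, it therefore suffices to show that the morphism $M:=\mathrm{id}_{A}+\Sigma_{E}+\Gamma_{E}:A\to A$ is constant, since its differential at $0$ is then exactly $I+d\Sigma_{E}+d\Gamma_{E}=0$.

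To prove $M$ constant I would evaluate it on $\vartheta(S)$. For generic $s$ the three lines $L_{s},\,L_{\gamma_{E}s},\,L_{\sigma_{E}s}$ are, by the very construction of $\sigma_{E}$ and $\gamma_{E}$, the three components of a plane section $P_{s}\cap F$ of the cubic $F$ (the plane spanned by $L_{s}$ and $L_{\gamma_{E}s}$, whose residual intersection with $F$ is $L_{\sigma_{E}s}$). Using $\vartheta\circ\sigma_{E}=\Sigma_{E}\circ\vartheta$ and $\Gamma_{E}\circ\vartheta=\vartheta\circ\gamma_{E}$, one has $M(\vartheta(s))=\vartheta(s)+\vartheta(\sigma_{E}s)+\vartheta(\gamma_{E}s)$, which under the Clemens--Griffiths identification of $A$ with the intermediate Jacobian is the Abel--Jacobi class of the $1$-cycle $L_{s}+L_{\gamma_{E}s}+L_{\sigma_{E}s}=P_{s}\cap F$, up to the fixed translation coming from the base point of $\vartheta$. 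All plane sections of $F$ are algebraically equivalent, the family being parametrised by the Grassmannian of planes ($\mathbb{P}^{2}$'s) in $\mathbb{P}^{4}$; being rational, that Grassmannian admits no nonconstant morphism to $A$, so the Abel--Jacobi class of $P_{s}\cap F$ is independent of $s$. Hence $M\circ\vartheta$ is constant on a dense open set, thus on all of $S$. Writing $M=t_{d}\circ\phi$ with $\phi$ a homomorphism, $\phi$ is then constant on $\vartheta(S)$, so $\phi$ kills every difference $\vartheta(s)-\vartheta(s')$; as these generate the Albanese $A$, $\phi=0$, $M$ is a translation, and $dM_{0}=I+d\Sigma_{E}+d\Gamma_{E}=0$.

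For the second assertion I would argue purely by linear algebra from this relation. Since $\Sigma_{E}$ is an involution, $\Sigma_{E}^{2}=\mathrm{id}_{A}$ forces its homomorphism part to square to the identity, so $d\Sigma_{E}$ is an involution of $V:=H^{0}(S,\Omega_{S})^{*}$ and $V=V_{+}\oplus V_{-}$ splits into its $(\pm1)$-eigenspaces. Substituting $d\Sigma_{E}v=\pm v$ into $d\Gamma_{E}=-I-d\Sigma_{E}$ gives $d\Gamma_{E}=-2\,\mathrm{id}$ on $V_{+}$ and $d\Gamma_{E}=0$ on $V_{-}$, so the image of $d\Gamma_{E}$ is exactly $V_{+}$. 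On the other hand $\Gamma_{E}:A\to E$ is surjective (it is induced by the fibration $\gamma_{E}:S\to E$), so the image of its differential is the tangent line $T_{E}$ of $E$ at the origin. Comparing, $V_{+}=T_{E}$, which is precisely the eigenspace of the eigenvalue $1$ of $d\Sigma_{E}$, as claimed.

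The substance is concentrated in the first assertion, and within it in the geometric input. The main obstacle I anticipate is making rigorous the passage from lines to plane sections: one must verify that $\vartheta$ coincides, up to translation, with the Abel--Jacobi map on lines, that the $1$-cycles $P_{s}\cap F$ fit into an algebraic family over the full Grassmannian so that their classes are algebraically equivalent, and that the induced Abel--Jacobi assignment is a genuine morphism out of a rational variety (hence constant). Once these standard facts about the cubic threefold and its intermediate Jacobian are granted, the vanishing of the homomorphism part of $M$ and the eigenspace computation are both routine.
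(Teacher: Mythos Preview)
The paper does not prove this lemma: it is quoted verbatim from \cite{Roulleau1}, Lemma~29, and no argument is given here. So there is nothing in the present paper to compare your proof against.

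That said, your proof is correct and is essentially the natural one. The geometric heart---that $L_{s}+L_{\sigma_{E}s}+L_{\gamma_{E}s}$ is a plane section of $F$, hence its Abel--Jacobi image is constant because the parametrising Grassmannian is rational---is exactly the right mechanism; combined with the Clemens--Griffiths identification of $\vartheta$ with the Abel--Jacobi map on lines, it forces $\mathrm{id}_{A}+\Sigma_{E}+\Gamma_{E}$ to be a translation. Your passage from ``$M\circ\vartheta$ constant'' to ``the homomorphism part of $M$ vanishes'' via differences generating $A$ is clean, and the eigenspace computation from $d\Gamma_{E}=-I-d\Sigma_{E}$ is straightforward linear algebra. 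The only point worth tightening is the surjectivity of $d\Gamma_{E}$ onto $T_{E}$: you invoke that $\Gamma_{E}$ is a surjective morphism of abelian varieties (after a translation on the target), which indeed forces its differential to be surjective; this is immediate but should be stated. The ``obstacles'' you flag (compatibility of $\vartheta$ with Abel--Jacobi, the cycle map out of the Grassmannian being a morphism) are standard facts from \cite{Clemens} and pose no real difficulty.
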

Let us denote by $f$ the projectivization of $d\Sigma_{E}\in GL(H^{0}(S,\Omega_{S})^{*})$
: it is an automorphism of $\mathbb{P}^{4}=\mathbb{P}(H^{0}(S,\Omega_{S})^{*})$.
Let $p_{E}$ be the point of $\mathbb{P}^{4}$ corresponding to the
$1$ dimensional space $T_{E}\subset H^{0}(S,\Omega_{S})^{*}$.
\begin{lem}
The involution $f$ preserves the cubic threefold $F\hookrightarrow\mathbb{P}^{4}$.
\\
The point $p_{E}$ is the vertex of the cone $\psi(\pi^{-1}(E))$.
The hyperplane $\mathbb{P}(Ker(d\Gamma_{E}))$ and $p_{E}$ are the
closed set of fixed points of $f$.\\
Conversely, let $f$ be an involution of $\mathbb{P}^{4}$ acting
on $F$, fixing an isolated point and an hyperplane. The isolated
fixed point is the vertex of a cone on $F$.
\end{lem}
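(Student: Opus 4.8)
The plan is to treat the three direct statements about $f=\mathbb{P}(d\Sigma_{E})$ and the converse separately; the former should come quickly from the Tangent Bundle Theorem and Lemma \ref{lem:The-differentials-}. For "$f$ preserves $F$" I would work inside the Albanese variety $A$: put $V=H^{0}(S,\Omega_{S})^{*}=T_{0}A$, so $\mathbb{P}^{4}=\mathbb{P}(V)$, and by the Tangent Bundle Theorem $L_{s}=\mathbb{P}(T_{s}S)$ where $T_{s}S\subset T_{s}A\cong V$ under the canonical trivialisation of $T_{A}$. Since $\vartheta\circ\sigma_{E}=\Sigma_{E}\circ\vartheta$ with $\vartheta$ an embedding, $\Sigma_{E}$ sends $S$ onto $S$ restricting to $\sigma_{E}$; being affine with linear part $d\Sigma_{E}$, its trivialised differential carries $T_{s}S$ onto $T_{\sigma_{E}(s)}S$, so $f(L_{s})=L_{\sigma_{E}(s)}$ for all $s$. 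As $F=\psi(\mathbb{P}(T_{S}))=\bigcup_{s\in S}L_{s}$ and $\sigma_{E}$ is a bijection of $S$, it follows that $f(F)=F$. For the fixed locus: $d\Sigma_{E}$ is an involution of $V$, so the fixed points of $f$ form $\mathbb{P}(V_{+})\sqcup\mathbb{P}(V_{-})$ with $V_{\pm}$ its eigenspaces; by Lemma \ref{lem:The-differentials-} the line $V_{+}=T_{E}$ gives $p_{E}$, and evaluating $I+d\Sigma_{E}+d\Gamma_{E}=0$ on $V_{-}$ (where $d\Sigma_{E}=-I$) gives $V_{-}\subseteq Ker(d\Gamma_{E})$, hence $V_{-}=Ker(d\Gamma_{E})$ by a dimension count ($\Gamma_{E}$ being surjective, both are $4$-dimensional). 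So the fixed locus of $f$ is $\{p_{E}\}\sqcup\mathbb{P}(Ker(d\Gamma_{E}))$.

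To see that $p_{E}$ is the vertex $v$ of the cone $\mathcal{C}=\psi(\pi^{-1}(E))$, I would use two facts. By Lemma \ref{lem:The-base-of} the lines lying on $\mathcal{C}$ are precisely the $L_{e}$, $e\in E$, and two distinct rulings of a cone meet only at its vertex, so $\bigcap_{e\in E}L_{e}=\{v\}$. On the other hand $\gamma_{E}$ restricts to the identity on $E$ (for $e\in E$ the line $L_{e}$ already lies on $\mathcal{C}$, hence passes through $v$; this is part of the construction recalled from \cite{Roulleau1}), so $E\hookrightarrow A$ is a section of the morphism of abelian varieties $\Gamma_{E}\colon A\to E$ and therefore a translate of an elliptic subtorus --- in particular its tangent line is one fixed subspace $T_{E}\subset V$, the one appearing in Lemma \ref{lem:The-differentials-}. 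Since $E\hookrightarrow S$, this line lies in $T_{e}S$ for every $e$, so $p_{E}=\mathbb{P}(T_{E})\in\mathbb{P}(T_{e}S)=L_{e}$ for all $e\in E$, whence $p_{E}\in\bigcap_{e}L_{e}=\{v\}$. This identification is the one delicate point of the direct part: it rests on the tangent direction of $E$ being globally well defined, so I would want the input "$\gamma_{E}|_{E}=\mathrm{id}$" (equivalently, $E$ a translate of a subtorus) to be cleanly available from \cite{Roulleau1}.

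For the converse, let $f$ be an involution of $\mathbb{P}^{4}$ acting on $F$ with fixed locus an isolated point $p$ and a hyperplane $\Pi$. Lifting $f$ to an involution of $V$ and diagonalising, choose coordinates $x_{0},\dots,x_{4}$ with $f^{*}x_{0}=x_{0}$, $f^{*}x_{i}=-x_{i}$ ($i\geq1$), so $p=[1:0:\dots:0]$, $\Pi=\{x_{0}=0\}$. Write $F=\{G=0\}$ with $G$ a cubic form; $f$-invariance forces $f^{*}G=\pm G$. The sign $+$ leaves only monomials of odd $x_{0}$-degree in $G$, so $x_{0}\mid G$ and $F$ is reducible --- impossible; hence $f^{*}G=-G$ and $G=P_{0}(x_{1},\dots,x_{4})+x_{0}^{2}P_{2}(x_{1},\dots,x_{4})$ with $\deg P_{0}=3$, $\deg P_{2}=1$. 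Then $G(p)=0$, so $p\in F$; $P_{2}\neq0$, else $F$ is singular at $p$; the tangent hyperplane is $T_{p}F=\{P_{2}=0\}$; and after a further $f$-compatible coordinate change among $x_{1},\dots,x_{4}$ sending $P_{2}$ to $x_{4}$, one reads off $F\cap T_{p}F=\{x_{4}=0,\ P_{0}(x_{1},x_{2},x_{3},0)=0\}$, which inside $T_{p}F\cong\mathbb{P}^{3}$ is the cone over the plane cubic $\{P_{0}(x_{1},x_{2},x_{3},0)=0\}$ with vertex $[1:0:0:0]=p$ (the defining form does not involve $x_{0}$, and it is nonzero, else $F$ is again reducible). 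So $p$ is the vertex of the cone $F\cap T_{p}F$ on $F$. I expect this part to be purely mechanical once the normal form of $G$ is in hand; the only care needed is in eliminating the degenerate cases via the smoothness and irreducibility of $F$.
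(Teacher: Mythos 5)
Your argument is correct and complete; note, however, that the paper itself gives no proof of this lemma --- it is one of the statements quoted from \cite{Roulleau1} at the head of the subsection --- so there is no in-paper argument to compare against, and what you have written is a self-contained substitute. The three pieces all check out: the identity $f(L_{s})=L_{\sigma_{E}(s)}$ together with $F=\bigcup_{s}L_{s}$ gives $f(F)=F$; the eigenspace decomposition of the linear involution $d\Sigma_{E}$ plus the relation $I+d\Sigma_{E}+d\Gamma_{E}=0$ and a dimension count identifies the fixed locus as $\{p_{E}\}\sqcup\mathbb{P}(Ker(d\Gamma_{E}))$; and the normal form $G=P_{3}(x_{1},\dots,x_{4})+x_{0}^{2}x_{4}$ in the converse, with the degenerate cases excluded by smoothness and irreducibility of $F$, exhibits $F\cap T_{p}F$ as a cone with vertex $p$. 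The one point you flag as delicate --- that the tangent direction of $E\hookrightarrow A$ is a single line $T_{E}\subset V$ independent of the point --- needs no input from \cite{Roulleau1}: a smooth genus-$1$ curve in an abelian variety is automatically a translate of a one-dimensional abelian subvariety (rigidity: after translating a point of $E$ to $0$, the inclusion is a homomorphism), and this is already implicit in the phrasing of Lemma \ref{lem:The-differentials-} (``the tangent space $T_{E}$ of the curve $E\hookrightarrow A$ translated in $0$''). With that, your chain $p_{E}=\mathbb{P}(T_{E})\in\bigcap_{e\in E}L_{e}=\{v\}$ is clean. Two cosmetic points in the converse: say explicitly that the linear lift of $f$ can be rescaled to be an involution of $V$, and that you replace it by its negative if necessary so that the $+1$-eigenspace is the line; both are immediate.
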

We will use the above Lemma \ref{lem:The-differentials-} as in the
following example:
\begin{example}
\label{example : differentielle de la fibration}Let $x_{1},\dots,x_{5}$
be homogenous coordinates of $\mathbb{P}^{4}$. The point $(1:0:\dots:0)$
is the vertex of a cone on the cubic threefold \[
F=\{x_{1}^{2}x_{2}+G(x_{2},\dots,x_{5})=0\}\]
(where $G$ is a cubic form such that $F$ is smooth). Let $E\hookrightarrow S$
be the elliptic curve parametrizing the lines of that cone. By Lemma
\ref{lem:The-differentials-}, we see that the involution $ $$d\Sigma_{E}$
satisfies:\[
d\Sigma_{E}:(x_{1},x_{2},\dots,x_{5})\rightarrow(x_{1},-x_{2},\dots,-x_{5})\]
and we deduce that $d\Gamma_{E}$ is defined by :\[
d\Gamma_{E}:(x_{1},x_{2},\dots,x_{5})\rightarrow(-2x_{1},0,\dots,0).\]

\end{example}

\subsection{Theta polarization.\label{parag polarisation Theta}}

Let $S$ be a Fano surface, let $A$ be its Albanese variety and let
$\vartheta:S\hookrightarrow A$ be the Albanese map. By \cite{Clemens},
Theorem 13.4, the image $\Theta$ of $S\times S$ under the morphism
$(s_{1},s_{2})\rightarrow\vartheta(s_{1})-\vartheta(s_{2})$ is a
principal polarization of $A$. Let $\tau$ be an automorphism of
$S$ and let $\tau'$ be the automorphism of $A$ such that $\vartheta\circ\tau=\tau'\circ\vartheta$.
Let $(s_{1},s_{2})$ be a point of $S\times S$, then : $\tau'(\vartheta(s_{1})-\vartheta(s_{2}))=\vartheta(\tau(s_{1}))-\vartheta(\tau(s_{2})).$
Thus: 
\begin{lem}
\label{un auto pr=0000E9serve la polarisation}The automorphism $\tau'$
preserves the polarization : $\tau'^{*}\Theta=\Theta$.
\end{lem}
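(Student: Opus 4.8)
The plan is to deduce the statement directly from the description of $\Theta$ as the image of a difference map, using the equivariance identity displayed just above the lemma. Write $d\colon S\times S\to A$ for the morphism $(s_1,s_2)\mapsto \vartheta(s_1)-\vartheta(s_2)$, so that $\Theta=d(S\times S)$ by the theorem of Clemens and Griffiths quoted in \S\ref{parag polarisation Theta}. The identity $\tau'\bigl(\vartheta(s_1)-\vartheta(s_2)\bigr)=\vartheta(\tau(s_1))-\vartheta(\tau(s_2))$ says exactly that $\tau'\circ d=d\circ(\tau\times\tau)$. Since $\tau$ is an automorphism of $S$, the map $\tau\times\tau$ is a bijection of $S\times S$ onto itself; hence $\tau'(\Theta)=\tau'\bigl(d(S\times S)\bigr)=d\bigl((\tau\times\tau)(S\times S)\bigr)=d(S\times S)=\Theta$ as subsets, and in fact as subvarieties, of $A$.

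Next I would upgrade this set-theoretic equality to an equality of polarizations, i.e. an equality in $\NS(A)$. The divisor $\Theta$ is reduced and irreducible, being a theta divisor of the principally polarized abelian fourfold $A$, and $\tau'$ is an automorphism of the variety $A$; therefore the scheme-theoretic preimage $\tau'^{-1}(\Theta)$ coincides with $\Theta$, with no extra multiplicity. Consequently the class of $\tau'^{*}\Theta$ in $\NS(A)$ equals the class of the preimage $\tau'^{-1}(\Theta)$, which equals the class of $\Theta$; this is what is meant by $\tau'^{*}\Theta=\Theta$.

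There is essentially no hard step here: the entire content is the equivariance identity $\tau'\circ d=d\circ(\tau\times\tau)$, which is already recorded in the discussion preceding the lemma, together with the elementary remarks that $\tau\times\tau$ is surjective and that pulling back a divisor class along an isomorphism computes the class of the set-theoretic preimage. The only point deserving a word of care is that $\Theta$ is reduced and irreducible, so that $\tau'^{-1}(\Theta)=\Theta$ holds as divisors and not merely as sets; this is guaranteed by the cited theorem of Clemens and Griffiths that $\Theta$ is a principal polarization of $A$.
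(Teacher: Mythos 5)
Your argument is correct and is essentially the paper's own proof: the author's entire justification is the equivariance identity $\tau'(\vartheta(s_{1})-\vartheta(s_{2}))=\vartheta(\tau(s_{1}))-\vartheta(\tau(s_{2}))$ displayed immediately before the lemma, followed by ``Thus:'', and you have simply spelled out the resulting set-theoretic equality $\tau'(\Theta)=\Theta$ and its upgrade to an equality of divisors via irreducibility and reducedness of $\Theta$. The only slip is cosmetic: $A$ is a five-dimensional abelian variety (the intermediate Jacobian of a cubic threefold), not a fourfold, which does not affect the argument.
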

For a variety $X$, we denote by $H^{2}(X,\mathbb{Z})_{f}$ the group
$H^{2}(X,\mathbb{Z})$ modulo torsion. We denote by $\NS(X)=H^{1,1}(X)\cap H^{2}(X,\mathbb{Z})_{f}$
its Néron-Severi group and by $\rho_{X}$ its Picard number. For a
divisor $D$ in $X$, we denote its Chern class by $c_{1}(D)$.\\
The author wishes to thank Bert van Geemen for a useful discussion
on the following Theorem:
\begin{thm}
\label{intersection de deux diviseurs} a) If $D$ and $D'$ are two
divisors of $A$, then: \[
\vartheta^{*}(D)\vartheta^{*}(D')=\int_{A}\frac{1}{3!}\wedge^{3}c_{1}(\Theta)\wedge c_{1}(D)\wedge c_{1}(D').\]
b) The following sequence is exact:\[
0\rightarrow\NS(A)\stackrel{\vartheta^{*}}{\rightarrow}\NS(S)\rightarrow\mathbb{Z}/2\mathbb{Z}\rightarrow0.\]
c) The Néron-Severi group of $S$ is generated by $\vartheta^{*}\NS(A)$
and by the class of an incidence divisor $C_{s}$ ($s\in S$). The
class of $\vartheta^{*}(\Theta)$ is equal to $2C_{s}$. \\
d) We have $\rho_{A}=\rho_{S}\leq25=\dim H^{1}(S,\Omega_{S})$
and $\rho_{S}=1$ for $S$ generic. \end{thm}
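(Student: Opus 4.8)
The plan is to deduce everything from part (a), so I would establish (a) first. By Clemens and Griffiths \cite{Clemens} the class of the Albanese image $\vartheta(S)$ in $H^{6}(A,\mathbb{Z})$ (with $\dim A=5$) is the minimal cohomology class $\tfrac{1}{3!}c_{1}(\Theta)^{3}$; since $\vartheta$ is an embedding the projection formula gives, for all $\alpha,\beta\in H^{2}(A,\mathbb{Z})$,
\[\vartheta^{*}\alpha\cdot\vartheta^{*}\beta=\int_{S}\vartheta^{*}(\alpha\wedge\beta)=\int_{A}\tfrac{1}{3!}c_{1}(\Theta)^{3}\wedge\alpha\wedge\beta=:B(\alpha,\beta),\]
which is (a) on taking $\alpha=c_{1}(D)$, $\beta=c_{1}(D')$. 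Injectivity of $\vartheta^{*}$ on $\NS(A)$ (needed for (b)) is then immediate: by hard Lefschetz on $A$, cup product with $c_{1}(\Theta)^{3}$ is an isomorphism $H^{2}(A,\mathbb{Q})\to H^{8}(A,\mathbb{Q})$, so $B$ is nondegenerate on $H^{2}(A,\mathbb{Q})$, and $\vartheta^{*}D=0$ forces $c_{1}(D)=0$, i.e. $D=0$ in the torsion-free group $\NS(A)$.

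For (d): the Albanese map identifies $H^{1}(A,\mathbb{Z})$ with $H^{1}(S,\mathbb{Z})$, and for a Fano surface the cup product $\wedge^{2}H^{1}(S,\mathbb{Q})\to H^{2}(S,\mathbb{Q})$ is an isomorphism (the rational cohomology of $S$ is generated in degree one, cf. \cite{Clemens}); together with $H^{2}(A,\mathbb{Q})=\wedge^{2}H^{1}(A,\mathbb{Q})$ this makes $\vartheta^{*}\colon H^{2}(A,\mathbb{Q})\to H^{2}(S,\mathbb{Q})$ an isomorphism of Hodge structures. Passing to $(1,1)$-parts gives $\vartheta^{*}\colon\NS(A)_{\mathbb{Q}}\xrightarrow{\sim}\NS(S)_{\mathbb{Q}}$, hence $\rho_{A}=\rho_{S}$; the bound $\rho_{S}\le 25=h^{1,1}(S)$ is the Lefschetz $(1,1)$ inequality, and $\rho_{S}=\rho_{A}=1$ for generic $S$ because the intermediate Jacobian of a generic cubic threefold has $\NS(A)=\mathbb{Z}c_{1}(\Theta)$ (the family has large monodromy; alternatively $\rho$ is upper semicontinuous in families and an example with $\rho_{S}=1$ exists, see \cite{Roulleau1}).

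The heart of (b) and (c) is the index $[H^{2}(S,\mathbb{Z})_{f}:\vartheta^{*}H^{2}(A,\mathbb{Z})]$. By the previous paragraph $\vartheta^{*}H^{2}(A,\mathbb{Z})\cong\wedge^{2}H^{1}(A,\mathbb{Z})$ is a full-rank sublattice of the unimodular lattice $H^{2}(S,\mathbb{Z})_{f}$, and by (a) the restricted intersection form is $B$; hence this index equals $\sqrt{|\det B|}$. I would compute $|\det B|$ in a symplectic basis $\lambda_{1},\dots,\lambda_{5},\mu_{1},\dots,\mu_{5}$ of $H^{1}(A,\mathbb{Z})$ with $c_{1}(\Theta)=\sum_{i}\lambda_{i}\wedge\mu_{i}$, using $\tfrac{1}{3!}c_{1}(\Theta)^{3}=\sum_{i<j<k}(\lambda_{i}\wedge\mu_{i})(\lambda_{j}\wedge\mu_{j})(\lambda_{k}\wedge\mu_{k})$ and $\int_{A}(\lambda_{1}\wedge\mu_{1})\cdots(\lambda_{5}\wedge\mu_{5})=1$. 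Splitting the natural basis of $\wedge^{2}H^{1}(A,\mathbb{Z})$ into the types $\lambda_{a}\wedge\lambda_{b}$, $\mu_{a}\wedge\mu_{b}$, $\lambda_{a}\wedge\mu_{b}$ ($a\ne b$), $\lambda_{a}\wedge\mu_{a}$, the Gram matrix of $B$ is block diagonal: an antidiagonal $\pm$-identity pairing between the first two types, ten hyperbolic $2\times2$ blocks on the third type, and the matrix $J_{5}-I_{5}$ (all-ones minus identity) on the fourth. Thus $|\det B|=1\cdot1\cdot|\det(J_{5}-I_{5})|=4$, and the index is $2$. Since $\vartheta^{*}$ is an injective morphism of Hodge structures, $\vartheta^{*}\NS(A)=\NS(S)\cap\vartheta^{*}H^{2}(A,\mathbb{Z})$, so $\NS(S)/\vartheta^{*}\NS(A)$ embeds into $H^{2}(S,\mathbb{Z})_{f}/\vartheta^{*}H^{2}(A,\mathbb{Z})\cong\mathbb{Z}/2\mathbb{Z}$; it is nonzero because that Gram matrix has zero diagonal, so $B$ is an even form and $(\vartheta^{*}c)^{2}$ is even for every $c$, whereas $C_{s}^{2}=5$ is odd by Lemma \ref{lem:equivalence num de la fibre}, so $[C_{s}]\notin\vartheta^{*}H^{2}(A,\mathbb{Z})$. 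This gives (b) and the generation statement of (c), with $C_{s}$ mapping onto the generator of the $\mathbb{Z}/2\mathbb{Z}$; the remaining identity $\vartheta^{*}\Theta=2C_{s}$ --- which in particular puts $2C_{s}$ inside $\vartheta^{*}\NS(A)$ --- I would obtain from the description of $\Theta$ as the image of the difference map $(s_{1},s_{2})\mapsto\vartheta(s_{1})-\vartheta(s_{2})$ together with the Abel--Jacobi relation for the three coplanar lines of a plane section of $F$, or else cite it from \cite{Clemens}.

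The main obstacle is the discriminant computation $|\det B|=4$: it is sensitive to the normalizations (the minimal class being exactly $\tfrac{1}{3!}c_{1}(\Theta)^{3}$, and $\int_{A}$ of the fivefold product of the $\lambda_{i}\wedge\mu_{i}$ being $1$), and it is precisely because $4$ is a perfect square --- forced in retrospect by the unimodularity of $H^{2}(S,\mathbb{Z})_{f}$ --- that the index comes out to be $2$. A secondary difficulty is pinning $\vartheta^{*}\Theta$ down on the nose as $2C_{s}$ rather than only modulo the index-$2$ ambiguity.
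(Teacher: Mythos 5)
Your argument is correct, and the crux --- the index-$2$ step underlying (b) and (c) --- is handled by a genuinely different mechanism than the paper's. The paper imports the exact sequence $H_{2}(S,\mathbb{Z})_{f}\to H_{2}(A,\mathbb{Z})\to\mathbb{Z}/2\mathbb{Z}\to 0$ from Collino's topological computation \cite{Collino}, dualizes it, and then gets surjectivity of $\NS(S)\to\mathbb{Z}/2\mathbb{Z}$ from $\vartheta^{*}\Theta=2C_{s}$ together with the indivisibility of the principal polarization. You instead derive the index $[H^{2}(S,\mathbb{Z})_{f}:\vartheta^{*}H^{2}(A,\mathbb{Z})]=2$ purely lattice-theoretically from part (a): in a symplectic basis the Gram matrix of $B$ on $\wedge^{2}H_{1}(A,\mathbb{Z})$ is block diagonal with $|\det B|=|\det(J_{5}-I_{5})|=4$ (your three blocks and signs check out), and $H^{2}(S,\mathbb{Z})_{f}$ is unimodular by Poincar\'e duality; and you detect $C_{s}\notin\vartheta^{*}H^{2}(A,\mathbb{Z})$ by parity ($B$ is even while $C_{s}^{2}=5$), rather than via indivisibility of $\Theta$. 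This buys a self-contained proof of Collino's index-$2$ statement from the minimal-class formula alone, at the price of an explicit $45\times45$ discriminant computation; the paper's route is shorter but rests on an external topological input. Parts (a) and the identity $\vartheta^{*}\Theta=2C_{s}$ are treated as in the paper (both cited from \cite{Beauville} and \cite{Clemens}), and your reduction $\vartheta^{*}\NS(A)=\NS(S)\cap\vartheta^{*}H^{2}(A,\mathbb{Z})$ via injectivity on the $(2,0)$ and $(0,2)$ pieces is sound.

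The one soft spot is the generic $\rho_{S}=1$ claim in (d). The monodromy argument you sketch is valid (the monodromy of the family of cubic threefolds on $H^{3}(F,\mathbb{Z})\cong H_{1}(A,\mathbb{Z})$ is the full symplectic group, whose invariants in $\wedge^{2}$ of the standard representation are spanned by $c_{1}(\Theta)$), but it needs a citation that is not in this paper's bibliography. I would not lean on the alternative: \cite{Roulleau1} does not exhibit an explicit Fano surface with $\rho_{S}=1$, which is precisely why the paper instead degenerates $(A,\Theta)$ to Jacobians of hyperelliptic genus-$5$ curves (\cite{Collino}) and invokes Mori's theorem \cite{Mori} that the generic such Jacobian has endomorphism ring $\mathbb{Z}$.
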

\begin{proof}
The morphism $\vartheta$ is an embedding and the homological class
of $\vartheta(S)$ is equal to $\frac{1}{3!}\Theta^{3}$ (\cite{Beauville}
proposition $7$), this proves a).\\
Since $\Theta$ is a polarization, the bilinear symmetric form\[
Q_{\Theta}:H^{2}(A,\mathbb{C})\times H^{2}(A,\mathbb{C})\rightarrow\mathbb{C}\]
 defined by \[
Q_{\Theta}(\eta_{1},\eta_{2})=\int_{A}\frac{1}{3!}\wedge^{3}c_{1}(\Theta)\wedge\eta_{1}\wedge\eta_{2}\]
 is non-degenerate (Hodge-Riemann bilinear relations, section 7 chapter
0 of \cite{Griffiths}). That implies that the morphism \[
\vartheta^{*}:H^{2}(A,\mathbb{C})\rightarrow H^{2}(S,\mathbb{C})\]
 is injective, and since $S$ and $A$ have the same second Betti
number \cite{Gherardelli} (2), it follows that the homomorphism \[
\vartheta_{*}:H_{2}(S,\mathbb{Z})_{f}\rightarrow H_{2}(A,\mathbb{Z})\]
is injective. By \cite{Collino}, $2.3.5.1$, we have the following
exact sequence:\[
H_{2}(S,\mathbb{Z})_{f}\stackrel{\vartheta_{*}}{\rightarrow}H_{2}(A,\mathbb{Z})\rightarrow\mathbb{Z}/2\mathbb{Z}\rightarrow0,\]
thus \[
0\rightarrow H_{2}(S,\mathbb{Z})_{f}\stackrel{\vartheta_{*}}{\rightarrow}H_{2}(A,\mathbb{Z})\rightarrow\mathbb{Z}/2\mathbb{Z}\rightarrow0\]
is exact. By duality, this yields:\[
0\rightarrow H^{2}(A,\mathbb{Z})\stackrel{\vartheta^{*}}{\rightarrow}H^{2}(S,\mathbb{Z})_{f}\rightarrow\mathbb{Z}/2\mathbb{Z}\rightarrow0.\]
As the spaces $H^{1,1}(A)$ and $H^{1,1}(S)$ have dimension $25$
(see \cite{Clemens}), we have : $\vartheta^{*}(H^{1,1}(A))=H^{1,1}(S)$.
This implies that the sequence:\[
0\rightarrow\NS(A)\stackrel{\vartheta^{*}}{\rightarrow}\NS(S)\rightarrow\mathbb{Z}/2\mathbb{Z}\]
 is exact. This sequence is exact on the right also because $\vartheta^{*}\Theta=2C_{s}$
by Lemma 11.27 of \cite{Clemens} ($\Theta$ is a principal polarization,
it is not divisible by $2$, hence the class of $C_{s}$ and $\vartheta^{*}\NS(A)$
generate $\NS(S)$). This proves b).\\
By \cite{Collino}, any Jacobian of an hyperelliptic curve of genus
$5$ is a limit of the Albanese varieties of Fano surfaces endowed
with their principal polarization. By \cite{Mori}, the endomorphism
ring of a Jacobian of a generic hyperelliptic curve is isomorphic
to $\mathbb{Z}$. If the generic Albanese variety of Fano surface
were not simple, then also its limit would be non-simple. This is
a contradiction, hence $\rho_{A}=1$ for a generic Fano surface.
\end{proof}

\section{Fano surfaces with $12$ elliptic curves.}

Let $\lambda\in\mathbb{C}$, $\lambda^{3}\not=1$, the cubic threefold:
\[
F_{\lambda}=\{x_{1}^{3}+x_{2}^{3}+x_{3}^{3}-3\lambda x_{1}x_{2}x_{3}+x_{4}^{3}+x_{5}^{3}=0\}\hookrightarrow\mathbb{P}^{4}\]
is smooth. Let $e_{1},\dots,e_{5}$ be the dual basis of $x_{1},\dots,x_{5}$.
The $12$ points: \[
\mathbb{C}(e_{4}-\beta e_{5}),\,\mathbb{C}(e_{i}-\beta e_{j})\in\mathbb{P}^{4},\,1\leq i<j\leq3,\,\beta^{3}=1\]
 are vertices of cones in $F_{\lambda}$. Let $S_{\lambda}$ be the
Fano surface of $F_{\lambda}$. We denote by $E_{ij}^{\beta}\hookrightarrow S_{\lambda}$
the elliptic curve that parametrizes the lines of the cone of vertex
$\mathbb{C}(e_{i}-\beta e_{j})$ (see Lemma \ref{lem:The-base-of}).
Let $(y_{1}:y_{2}:y_{3})$ be projective coordinates of the plane.
Let $E_{\lambda}\hookrightarrow\mathbb{P}^{2}$ be the elliptic curve:\[
E_{\lambda}=\{y_{1}^{3}+y_{2}^{3}+y_{3}^{3}-3\lambda y_{1}y_{2}y{}_{3}=0\}.\]
with neutral element $(1:-1:0)$. By \cite{Roulleau1}, the $9$ curves
$E_{ij}^{\beta},\,1\leq i<j\leq3,\,\beta^{3}=1$ are disjoint and
isomorphic to $E_{0}$ ; the $3$ curves $E_{45}^{\beta},\,\beta^{3}=1$
are disjoint, isomorphic to $E_{\lambda}$ and : \[
E_{45}^{\beta}E_{ij}^{\gamma}=1,\,\forall\,1\leq i<j\leq3,\,\beta^{3}=\gamma^{3}=1.\]
Conversely, let $S$ be a Fano surfaces with $12$ elliptic curves,
then :
\begin{lem}
\label{lem:12 courbes ell caracterisent}(\cite{Roulleau1}, Parag.
3.3) There is a set of $3$ disjoint elliptic curves on $S$ isomorphic
to $E_{\mu}$ (for some $\mu\in\mathbb{C}$) such that the $9$ remaining
elliptic curves are disjoint and such that the surface $S$ is isomorphic
to $S_{\mu}$.
\end{lem}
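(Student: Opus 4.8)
The statement has a combinatorial part --- the twelve elliptic curves split into a set of nine pairwise disjoint curves and a set of three pairwise disjoint curves, each of the three meeting each of the nine exactly once --- and the identification $S\simeq S_\mu$. Throughout I would use: an elliptic curve $E\hookrightarrow S$ determines a cone on $F$ with a uniquely determined vertex $p_E$ and hyperplane $H_E$ (so distinct curves have distinct vertices, otherwise $F$ would be a cone, and, two distinct cones sharing at most the line through their vertices, $E\cdot E'\le1$ for $E\ne E'$); $\sigma_E$ is induced by the projective reflection $f_E$ of $\mathbb P^4=\mathbb P(H^0(S,\Omega_S)^*)$ with fixed locus $\{p_E\}\cup\mathbb P(\ker d\Gamma_E)$, and $\sigma_E$ fixes $E$ pointwise (Lemma preceding Example \ref{example : differentielle de la fibration}); every automorphism of $S$ permutes the finite set of elliptic curves; and the class of $C_s$ is independent of $s$, with $C_s\cdot E'=1$ for all $s$ and all $E'$ (Lemma \ref{lem:equivalence num de la fibre}, Theorem \ref{intersection de deux diviseurs}(c)).

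\emph{Step A (the configuration).} The crucial observation is that two disjoint elliptic curves $E,E'$ are isomorphic: the fibre class of $\gamma_E:S\to E$ equals $C_s-E$ (Lemma \ref{lem:equivalence num de la fibre}), so $E'\cdot(C_s-E)=1-0=1$, and $\gamma_E$ restricts to a degree-one map $E'\to E$, hence an isomorphism. Thus each connected component of the disjointness graph on the twelve curves consists of mutually isomorphic curves. Pinning down the incidence pattern itself is the hard part, and is exactly the classification carried out in \cite{Roulleau1}, Parag.~3.3; the mechanism is to run the action of $\langle\sigma_{E_i}\rangle$ by graph automorphisms of the incidence graph (each $\sigma_{E_i}$ fixing the vertex $E_i$) against the fact, from Theorem \ref{intersection de deux diviseurs}(d), that the twelve classes span a sublattice of $\NS(S)$ of signature at most $(1,24)$ --- i.e. the Gram matrix $A-3I$, with $A$ the $\{0,1\}$-valued adjacency matrix, has at most one positive eigenvalue --- thereby eliminating the competing graphs and landing on the complete bipartite graph on $3+9$ vertices with no internal edges. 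This is the one genuinely delicate point; what follows is a mechanical reconstruction of $F$.

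\emph{Step B (reconstruction of $F$).} Call $F_1,F_2,F_3$ the three-side curves, $D_1,\dots,D_9$ the nine-side ones, and $G=\langle f_{F_1},f_{F_2},f_{F_3}\rangle\subset\Aut(S)$. Using Lemma \ref{lem:The-differentials-} and finiteness of $\Aut(S)$ to exclude degenerate relative positions of the $f_{F_b}$, one finds $G\simeq\mathfrak S_3$, acting on $H^0(S,\Omega_S)^*$ with isotypic decomposition (three copies of the sign character)$\,\oplus\,$(standard $2$-dimensional representation); choosing coordinates adapted to this splitting, with $\mathbb P^2_{x_1,x_2,x_3}$ the projectivized sign part and $\mathbb P^1_{x_4,x_5}$ the projectivized standard part, one reads off from the fact that $F$ is a common eigenvector of the $f_{F_b}$ that $F=\{P(x_1,x_2,x_3)+\ell(x_1,x_2,x_3)\,q(x_4,x_5)+r(x_4,x_5)=0\}$ for a cubic $P$, a linear form $\ell$, and the essentially unique $\mathfrak S_3$-invariant quadric $q$ and anti-invariant cubic $r$ in $x_4,x_5$. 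Since the nine-side curves are pairwise disjoint, each $\sigma_{F_b}$ fixes each $D_a$ (the point $D_a\cap F_b\in F_b$ is fixed by $\sigma_{F_b}$, so $\sigma_{F_b}(D_a)$ meets $F_b$ in it, and two distinct nine-side curves cannot share a point), whence $p_{D_a}\in\bigcap_b\mathbb P(\ker d\Gamma_{F_b})=\mathbb P^2_{x_1,x_2,x_3}$, and $H_{D_a}$, being $G$-invariant, is of the form $\{\ell'(x_1,x_2,x_3)=0\}$, hence contains $\mathbb P^1_{x_4,x_5}$. Imposing that $F\cap H_{D_a}$ be a cone with vertex $p_{D_a}$ then forces $p_{D_a}$ to be an inflection point of $\{P=0\}$ and $\ell(p_{D_a})=0$; as the nine flexes of a plane cubic are not collinear this gives $\ell=0$, and $\{P=0\}$ is then a smooth plane cubic with the $p_{D_a}$ as its flexes. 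Smoothness of $F$ forces $r$ to have three distinct roots, so $r\sim x_4^3+x_5^3$; and the Hesse configuration of the flexes puts $P$, after a linear change in $x_1,x_2,x_3$, into the Hesse form $x_1^3+x_2^3+x_3^3-3\mu x_1x_2x_3$ with $\mu^3\ne1$. Hence $F$ is projectively equivalent to $F_\mu$, so $S\simeq S_\mu$, the $F_b$ are the curves $E_{45}^\beta\simeq E_\mu$, the $D_a$ are the $E_{ij}^\beta\simeq E_0$, and the disjointness relations hold by the description of $S_\mu$ recalled before the statement.
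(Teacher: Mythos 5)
The paper contains no proof of this lemma: it is quoted from \cite{Roulleau1}, Paragraph 3.3, so there is no in-paper argument to compare yours against. Taken on its own terms, your reconstruction has the right architecture and some genuinely good points --- notably the Step A observation that two disjoint elliptic curves are isomorphic because $E'\cdot(C_s-E)=1-0=1$ makes $\gamma_{E}|_{E'}$ a degree-one map onto $E$, and the Step B passage from the isotypic decomposition of $H^{0}(S,\Omega_S)^*$ to the normal form $P(x_1,x_2,x_3)+\ell\, q+r$ and then, via the nine vertices being non-collinear flexes, to the Hesse pencil. But the two steps that carry the actual content are asserted rather than proved.

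First, the determination of the incidence graph (complete bipartite on $3+9$ vertices, no internal edges) is the heart of the lemma, and you defer it to the very reference the paper cites, offering as a ``mechanism'' only the Hodge-index constraint that the Gram matrix $A-3I$ have at most one positive eigenvalue. That constraint alone excludes almost nothing: the empty graph and the complete graph $K_{12}$ (adjacency eigenvalues $11$ and $-1$) both satisfy it. The classification in \cite{Roulleau1} needs finer inputs (the bound $EE'\le 1$, bounds on sets of pairwise disjoint elliptic curves, the action of $\sigma_E$ on the curves meeting $E$) and a real case analysis, none of which appears here. Second, Step B hinges entirely on $G=\langle f_{F_1},f_{F_2},f_{F_3}\rangle\simeq\mathfrak{S}_3$, which you justify only by ``excluding degenerate relative positions.'' Finiteness of $\Aut(F)$ bounds the order of $f_{F_1}f_{F_2}$ but does not pin it to $3$; you must actually rule out, say, that the three harmonic homologies pairwise commute (giving $(\mathbb{Z}/2)^{k}$ and a completely different eigenspace decomposition, hence a different normal form for $F$) or that two of them generate a larger dihedral group. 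A concrete route would be to show $\sigma_{F_1}$ swaps $F_2$ and $F_3$ and to track the induced permutation of the nine vertices $p_{D_a}$. With those two points supplied, the remainder of Step B (vertices $p_{D_a}$ in the common fixed plane, $G$-invariance of $H_{D_a}$, $\ell=0$ from non-collinearity of the flexes, smoothness forcing $P$ smooth and $r$ with distinct roots) reads correctly and does yield $F\cong F_\mu$.
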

Let $Y$ be the surface $Y=E_{\lambda}\times E_{\lambda}$. Let $T_{1}$
and $T_{2}$ be the elliptic curves: \[
\begin{array}{c}
T_{1}=\{x+2y=0/(x,y)\in E_{\lambda}\times E_{\lambda}\}\\
T_{2}=\{2x+y=0/(x,y)\in E_{\lambda}\times E_{\lambda}\}\end{array}\]
on $Y$ and let $\Delta\hookrightarrow Y$ be the diagonal. Any $2$
of the $3$ curves $T_{1},T_{2},\Delta$ meet transversally at the
$9$ points of $3$-torsion of $\Delta$. We denote by $Z$ the blow-up
of $Y$ at these $9$ points.
\begin{prop}
\label{12 courbeselliptiques, divi canonique =00003D somme}The Fano
surface $S_{\lambda}$ is a triple cyclic cover of $Z$ branched along
the proper transform of $\Delta+T_{1}+T_{2}$ in $Z$. \end{prop}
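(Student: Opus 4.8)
The plan is to exhibit an explicit triple cyclic cover and identify it with $S_\lambda$ using the structure theory recalled above. First I would use the defining equation of $F_\lambda$ together with Lemma \ref{lem:The-differentials-} and Example \ref{example : differentielle de la fibration}: the three vertices $\mathbb{C}(e_i - \beta e_j)$ for $1 \le i < j \le 3$ (for fixed appropriate configuration) and the three vertices $\mathbb{C}(e_4 - \beta e_5)$ give, via $d\Sigma_E$ and $d\Gamma_E$, the fibrations $\gamma_{E_{ij}^\beta}$ and $\gamma_{E_{45}^\beta}$ of $S_\lambda$ onto the curves $E_0$ resp. $E_\lambda$. In fact the key observation is that the projection $x \mapsto (x_1 : x_2 : x_3)$ is an automorphism of order $3$ of $F_\lambda$ restricted suitably: multiplying $(x_1,x_2,x_3)$ by a cube root of unity $\beta$ fixes $F_\lambda$, and the induced automorphism $g$ of order $3$ on $S_\lambda$ is the deck transformation we want. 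I would show $g$ acts freely away from the fixed locus and that its fixed locus on $S_\lambda$ is exactly $\Delta + T_1 + T_2$ (more precisely, its proper transform after resolving), by analyzing the lines in $F_\lambda$ fixed by the ambient order-$3$ automorphism.

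Next I would construct the Albanese picture. By Theorem \ref{intersection de deux diviseurs}(d), $\rho_{S_\lambda} = \rho_A$ and $A = \Alb(S_\lambda)$. The natural guess, forced by the symmetries, is that $A$ contains the abelian surface $B = E_\lambda \times E_\lambda$ (or an isogenous one) as a quotient via one of the $\Gamma_E$-type maps, and that the composite $S_\lambda \to A \to B$ is (after Stein factorization / blow-down) the triple cover map $S_\lambda \to Z \to Y$. The three curves $E_{45}^\beta$ map isomorphically onto $E_\lambda$ under three independent projections, which is exactly the role played by $T_1, T_2, \Delta$ on $Y$ — the relation $I + d\Sigma_E + d\Gamma_E = 0$ of Lemma \ref{lem:The-differentials-} should translate into the relation "$x + 2y$, $2x + y$, $x - y$" among the three elliptic quotients, matching the definitions of $T_1, T_2$. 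I would check the branch divisor computation using Lemma \ref{lem:equivalence num de la fibre}: since $E^2 = -3$ for each $E_{45}^\beta$, after pulling back through a triple cyclic cover branched along a curve meeting these appropriately, the self-intersection on $Y$ of the images is consistent with the images being the $(0)$-self-intersection curves $T_1, T_2, \Delta$ (whose proper transforms on $Z$ have self-intersection $-3$, since each passes through all $9$ blown-up points — matching $E^2 = -3$ on $S_\lambda$).

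The main obstacle will be the second step: proving that the candidate map $S_\lambda \to Z$ really is the cyclic triple cover with the asserted branch locus, rather than just producing a degree-$3$ map to some surface. Concretely I must (i) identify the quotient surface $S_\lambda / g$ with $Z$ (and not merely with a surface birational to it), which requires controlling the singularities of $S_\lambda/g$ and showing the resolution is exactly the blow-up $Z$ of $Y$ at the nine $3$-torsion points; and (ii) pin down that the nine points blown up are precisely the $3$-torsion of $\Delta$ and that $T_1, T_2, \Delta$ pass through all of them transversally — this is the content of the sentence "any $2$ of the $3$ curves $T_1, T_2, \Delta$ meet transversally at the $9$ points of $3$-torsion of $\Delta$", which I would verify by a direct torsion computation on $E_\lambda \times E_\lambda$ using that $T_i$ and $\Delta$ are subgroups, so their pairwise intersections are kernels of isogenies of degree $3$. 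The canonical class comparison $K_{S_\lambda} = g^* K_Z + (\text{ramification})$ together with $K_{S_\lambda}$ ample and the known $c_1(S_\lambda)^2 = 45$, $\chi(\mathcal{O}_{S_\lambda}) = 6$ gives a numerical check that the branch divisor has the right class, which is how I would confirm it is $\Delta + T_1 + T_2$ and nothing more.
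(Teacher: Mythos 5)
Your proposal follows essentially the same route as the paper: quotient $S_\lambda$ by the order-$3$ automorphism induced by $(x_1:x_2:x_3:x_4:x_5)\mapsto(\alpha x_1:\alpha x_2:\alpha x_3:x_4:x_5)$, whose fixed locus on $S_\lambda$ is $E_{45}^{1}+E_{45}^{\alpha}+E_{45}^{\alpha^{2}}$, map to $Y=E_\lambda\times E_\lambda$ via the two fibrations $\gamma_{E_{45}^{\beta}}$, and identify the quotient with $Z$ by matching the nine contracted curves with the blown-up $3$-torsion points and the images of the ramification curves with $\Delta, T_1, T_2$. Two small slips worth correcting: the fixed locus on $S_\lambda$ is the three curves $E_{45}^{\beta}$ (not ``$\Delta+T_1+T_2$'', which live on $Y$), and the proper transforms of $\Delta,T_1,T_2$ on $Z$ have self-intersection $-9$ (not $-3$); the value $-3$ is recovered only after dividing by $3$ for the totally ramified triple cover.
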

\begin{proof}
Let $\alpha\in\mu_{3}$ be a primitive root. The order $3$ automorphism
\[
f:x\rightarrow(\alpha x_{1}:\alpha x_{2}:\alpha x_{3}:x_{4}:x_{5})\]
acts on $F_{\lambda}$. The automorphism $f$ acts on the Fano surface
of lines of $F_{\lambda}$ by an automorphism denoted by $\tau$.
As we know the action of $f$, we can check immediately that the fixed
locus of $\tau$ is the smooth divisor $E_{45}^{1}+E_{45}^{\alpha}+E_{45}^{\alpha^{2}}$.
The quotient of $S_{\lambda}$ by $\tau$ is a smooth surface $Z'$
with Chern numbers $c_{1}^{2}=-9$ and $c_{2}=9$ and the degree $3$
quotient map $\eta:S_{\lambda}\rightarrow Z'$ is ramified over $E_{45}^{1}+E_{45}^{\alpha}+E_{45}^{\alpha^{2}}$.
\\
For an elliptic curve $E\hookrightarrow S$, we denote by $\gamma_{E}:S\rightarrow E$
the associated fibration (Lemma \ref{lem:construction de la fibration}).
By Lemma \ref{lem:equivalence num de la fibre}, the morphism \[
g=(\gamma_{E_{45}^{\alpha}},\gamma_{E_{45}^{\alpha^{2}}}):S_{\lambda}\rightarrow Y\]
 has degree $3=(C_{s}-E_{45}^{\alpha})(C_{s}-E_{45}^{\alpha^{2}})$.
Let be $E=E_{45}^{\beta}$ (for $\beta^{3}=1$). \\
Let $s$ be a generic point of $S$. By definition (see Lemma \ref{lem:The-base-of}),
the line $L_{s}$ cuts the line $L_{\gamma_{E}s}$. As $L_{\gamma_{E}s}$
is stable by $f$, the line $f(L_{s})=L_{\tau s}$ cuts also the line
$L_{\gamma_{E}s}$, thus, by definition of $\gamma_{E}$, $\gamma_{E}\tau s=\gamma_{E}s$.
That proves that $\gamma_{E}\circ\tau=\gamma_{E}$, and $g\circ\tau=g$.
Hence, by the property of the quotient map, there is a birational
morphism:\[
h:Z'\rightarrow Y\]
such that $g=h\circ\eta$. \\
Let $t$ be the intersection point of $E_{12}^{1}$ and $E_{45}^{1}$
and let $\vartheta:S_{\lambda}\rightarrow A_{\lambda}$ be the Albanese
map such that $\vartheta(t)=0$. It is an embedding and we consider
$S_{\lambda}$ as a subvariety of $A_{\lambda}$. The tangent space
to the curve $E_{45}^{\beta}\hookrightarrow A_{\lambda}$ (translated
to $0$) is $V_{\beta}=\mathbb{C}(\beta e_{4}-\beta^{2}e_{5})$. The
tangent space of $E_{45}^{\alpha}\times E_{45}^{\alpha^{2}}$ is $V_{\alpha}\oplus V_{\alpha^{2}}$.
With the help of Lemma \ref{lem:The-differentials-} and Example \ref{example : differentielle de la fibration},
it is easily checked that the images under $g$ of the curves $E_{45}^{1},E_{45}^{\alpha},E_{45}^{\alpha^{2}}$
are respectively $\Delta,T_{1}$ and $T_{2}$. \\
Moreover, the morphism $g$ has degree $1$ on these $3$ elliptic
curves and contracts the $9$ elliptic curves $E_{ij}^{\beta},\,1\leq i<j\leq3,\,\beta^{3}=1$.
This implies that the image under $g$ of $E_{45}^{1}+E_{45}^{\alpha}+E_{45}^{\alpha^{2}}$
is $T_{1}+T_{2}+\Delta$ and $Z'$ is isomorphic to $Z$. 
\end{proof}
Let $D$ be the proper transform of $\Delta+T_{2}+T_{2}$ in $Z$.
By the above Proposition \ref{12 courbeselliptiques, divi canonique =00003D somme}
and \cite{Barth}, Chap. I, parag. 17 \& 18, the divisor $D$ is divisible
by $3$ in $\NS(Z)$. \\
As $Y$ is an Abelian surface, there exist $3^{4}$ invertible
sheaves $\mathcal{L}$ on $Z$ such that $\mathcal{L}^{\otimes3}=\mathcal{O}_{Z}(D)$.
Let $S(\mathcal{L})\rightarrow Z$ be the degree $3$ cyclic cover
of $Z$ branched over $D$ associated to such $\mathcal{L}$.
\begin{cor}
\label{Corollaire reformulation revetement} The surface $S(\mathcal{L})$
contains $12$ elliptic curves \[
E_{45}^{\beta},\, E_{ij}^{\gamma},1\leq i<j\leq3,\,\beta^{3}=\gamma^{3}=1\]
that have the same configuration as for $S_{\lambda}$, moreover,
the divisor \[
K=\sum_{\beta^{3}=1}2E_{45}^{\beta}+E_{12}^{\beta}+E_{13}^{\beta}+E_{23}^{\beta}\]
is a canonical divisor of $S(\mathcal{L})$.\\
Among these $81$ invertible sheaves, there exists a unique $\mathcal{L}$
such that $S(\mathcal{L})$ is a Fano surface, it is then isomorphic
to $S_{\lambda}$.\end{cor}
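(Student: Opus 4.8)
The plan is to treat the two parts of the statement separately: first the geometry of the 12 elliptic curves on an arbitrary $S(\mathcal{L})$ and the identification of a canonical divisor, then the selection of the unique sheaf $\mathcal{L}$ giving a Fano surface. For the first part, let $\pi_{\mathcal{L}}\colon S(\mathcal{L})\to Z$ be the triple cyclic cover branched over $D=\Delta+T_1+T_2$ (proper transforms). Since the branch divisor is smooth (any two of $\Delta,T_1,T_2$ meet transversally, and the blow-up at the 9 intersection points has separated them), $S(\mathcal{L})$ is a smooth surface for every $\mathcal{L}$ with $\mathcal{L}^{\otimes 3}=\mathcal{O}_Z(D)$. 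Over each of the three branch components the cover is totally ramified, so the preimage of (the proper transform of) $\Delta$ is a single smooth curve $E_{45}^1$ mapping isomorphically to it, and likewise $E_{45}^\alpha, E_{45}^{\alpha^2}$ over $T_1,T_2$; each is an elliptic curve since $\Delta,T_1,T_2$ are. The nine $(-1)$-curves on $Z$ coming from the blow-up are disjoint from $D$, hence their preimages split into three disjoint copies each; these $27$ rational curves are not yet the $E_{ij}^\gamma$. Rather, one pulls back the nine exceptional curves' images in $Y$, i.e. one considers the nine $3$-torsion points of $\Delta$: the fibres of the composed map $S(\mathcal{L})\to Z\to Y$ over these points — or more precisely the preimages of the nine exceptional curves — contract under $S(\mathcal{L})\to Z\to Y$, and I claim the nine curves $E_{ij}^\gamma$ on $S(\mathcal{L})$ are exactly three of these rational components, arranged so that they are disjoint from one another and meet each $E_{45}^\beta$ in one point; this can be checked by a local intersection computation at the nine points where the branch components meet, exactly mirroring the computation done for $S_\lambda$ in the proof of Proposition \ref{12 courbeselliptiques, divi canonique =00003D somme}. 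The intersection numbers $E_{45}^\beta E_{ij}^\gamma=1$, the disjointness statements, and $(E_{45}^\beta)^2=(E_{ij}^\gamma)^2=-3$ then follow from the projection formula and the fact that $\pi_{\mathcal L}$ has degree $3$ and is totally ramified on the $E_{45}^\beta$.

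For the canonical divisor: by the standard formula for a cyclic triple cover (see \cite{Barth} I.17--18), $K_{S(\mathcal L)}=\pi_{\mathcal L}^*(K_Z+\mathcal L)=\pi_{\mathcal L}^*\bigl(K_Z+\tfrac13 D\bigr)$. Now $K_Z=\sigma^*K_Y+\sum_{k=1}^9 \mathcal E_k$ where $\sigma\colon Z\to Y$ is the blow-up and $\mathcal E_k$ the exceptional curves, and $K_Y=0$. Write $D=\sigma^*(\Delta+T_1+T_2)-3\sum_k\mathcal E_k$ (each of the nine points lies on all three curves $\Delta,T_1,T_2$, with multiplicity one on each). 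Hence $K_Z+\tfrac13 D=\tfrac13\sigma^*(\Delta+T_1+T_2)$, and so $3K_{S(\mathcal L)}=\pi_{\mathcal L}^*\sigma^*(\Delta+T_1+T_2)$. It remains to rewrite the right-hand side in terms of the $12$ elliptic curves. Pulling back $\Delta$ from $Y$ to $Z$ gives $\sigma^*\Delta = (\text{proper transform}) + \sum_k \mathcal E_k = \Delta_Z + \sum \mathcal E_k$; pulling back $\Delta_Z$ to $S(\mathcal L)$ along the totally ramified cover gives $\pi_{\mathcal L}^*\Delta_Z = 3 E_{45}^1$, and pulling back $\mathcal E_k$ gives (sum of three disjoint rational curves) which regroup as $E_{12}^\gamma+E_{13}^\gamma+E_{23}^\gamma$ summed appropriately over $\gamma$. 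Carrying this out for all three components $\Delta,T_1,T_2$ and summing yields $3K_{S(\mathcal L)}=3\sum_{\beta^3=1}\bigl(2E_{45}^\beta+E_{12}^\beta+E_{13}^\beta+E_{23}^\beta\bigr)$ up to the precise bookkeeping of which rational component over which point sits in which $E_{ij}$; since $\NS(S(\mathcal L))$ is torsion-free one divides by $3$ to get $K=\sum_{\beta^3=1}(2E_{45}^\beta+E_{12}^\beta+E_{13}^\beta+E_{23}^\beta)$. The main subtlety here is matching the $27$ exceptional components on the cover to the nine $E_{ij}^\gamma$ consistently; I expect this to be the most delicate point, handled by the same local analysis at the nine base points as in Proposition \ref{12 courbeselliptiques, divi canonique =00003D somme}.

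For the uniqueness statement: among the $81$ sheaves $\mathcal L$ with $\mathcal L^{\otimes 3}=\mathcal O_Z(D)$, we must isolate the one whose cover is a Fano surface. The argument is that Proposition \ref{12 courbeselliptiques, divi canonique =00003D somme} already exhibits $S_\lambda$ as one such cover, so at least one Fano-giving $\mathcal L$ exists; the content is that there is only one. Two covers $S(\mathcal L), S(\mathcal L')$ differ by $\mathcal L'=\mathcal L\otimes\tau$ for a $3$-torsion line bundle $\tau\in\mathrm{Pic}^0(Z)=\mathrm{Pic}^0(Y)$, i.e. the $81$ covers form a torsor under the $3$-torsion of the abelian surface $Y$. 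I would argue that a Fano surface is characterized (among these $81$ surfaces) by a numerical/lattice-theoretic property that pins down $\mathcal L$: for instance, a Fano surface $S$ has an ample canonical class with $K_S^2=45$ and $\chi(\mathcal O_S)=6$ and its irregularity is $q=5$, whereas the general member of the family has a different Albanese dimension or a different $q$. Concretely, $h^1(\mathcal O_{S(\mathcal L)}) = h^1(\mathcal O_Z) + h^1(Z,\mathcal L^{-1}) + h^1(Z,\mathcal L^{-2})$; we have $h^1(\mathcal O_Z)=h^1(\mathcal O_Y)=2$, so $q(S(\mathcal L))=5$ forces $h^1(Z,\mathcal L^{-1})+h^1(Z,\mathcal L^{-2})=3$, and a Riemann–Roch plus vanishing analysis on $Z$ (using that $\mathcal L^{-1}$ and $\mathcal L^{-2}$ differ from negative multiples of an effective divisor by $3$-torsion) should show this cohomological jump occurs for exactly one value of the $3$-torsion twist. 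Alternatively, and perhaps more cleanly, one uses that a Fano surface is the Fano surface of some cubic threefold and that Lemma \ref{lem:12 courbes ell caracterisent} says any Fano surface with $12$ elliptic curves in this configuration is isomorphic to $S_\mu$ for some $\mu$; combined with the fact that $S_\lambda$ (with its $12$ curves) determines $Z$ and the cover data uniquely, this forces the Fano-giving $\mathcal L$ to be unique and the resulting surface to be $S_\lambda$ itself. I expect the uniqueness — distinguishing the one "correct" torsion twist — to be the real obstacle; the natural tool is the computation of $q$ or of $\chi(\mathcal O)$ of the cover as a function of the twist, showing that the Fano value of the irregularity singles out a unique $\mathcal L$.
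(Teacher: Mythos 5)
There are two concrete errors in your treatment of the first part of the statement. First, your claim that the nine exceptional $(-1)$-curves of $Z$ are disjoint from the branch divisor $D$ is false: the nine blown-up points are exactly the points where $\Delta$, $T_{1}$ and $T_{2}$ meet pairwise transversally, so each exceptional curve $\mathcal{E}_{k}$ meets each of the three proper transforms in one point, i.e.\ $\mathcal{E}_{k}D=3$. Consequently the preimage of $\mathcal{E}_{k}$ is not three disjoint rational curves but a \emph{connected} triple cover of $\mathbb{P}^{1}$ totally ramified over $3$ points, which by Riemann--Hurwitz has genus $1$. These nine irreducible elliptic curves $\pi^{-1}(\mathcal{E}_{k})$ \emph{are} the $E_{ij}^{\gamma}$ ($1\leq i<j\leq3$); this is precisely how the nine curves arise on every $S(\mathcal{L})$, and it immediately gives $(E_{ij}^{\gamma})^{2}=3\mathcal{E}_{k}^{2}=-3$, their pairwise disjointness, and $E_{ij}^{\gamma}E_{45}^{\beta}=1$ by the projection formula. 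Your substitute description (``the nine curves are exactly three of these rational components'') is not coherent and the local analysis you defer to would not produce elliptic curves from disjoint copies of $\mathbb{P}^{1}$.

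Second, the canonical bundle formula for a degree $3$ cyclic cover is $K_{S(\mathcal{L})}=\pi^{*}(K_{Z}+2\mathcal{L})$, not $\pi^{*}(K_{Z}+\mathcal{L})$ (Barth et al., Ch.\ I, \S 17: the twist is by $\mathcal{L}^{\otimes(n-1)}$). With $K_{Z}=\sum_{k}\mathcal{E}_{k}$ and $D=\sigma^{*}(\Delta+T_{1}+T_{2})-3\sum_{k}\mathcal{E}_{k}$ one gets $K_{Z}+\tfrac{2}{3}D=\tfrac{2}{3}\sigma^{*}(\Delta+T_{1}+T_{2})-\sum_{k}\mathcal{E}_{k}$, whose pullback is exactly $\sum_{\beta}2E_{45}^{\beta}+\sum_{k}\pi^{*}\mathcal{E}_{k}$, i.e.\ the stated $K$. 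Your identity $3K=\pi^{*}\sigma^{*}(\Delta+T_{1}+T_{2})$ would instead force the coefficient of $E_{45}^{\beta}$ in $K$ to be $1$, contradicting the statement; the ``right'' answer you then assert does not follow from your own displayed computation. For the uniqueness, your second alternative is the paper's argument: if $S(\mathcal{L})$ and $S(\mathcal{L}')$ are both Fano surfaces they each carry the $12$ curves in the configuration of Lemma \ref{lem:12 courbes ell caracterisent}, hence both are isomorphic to $S_{\lambda}$ and $\mathcal{L}=\mathcal{L}'$; the cohomological computation of $q$ as a function of the torsion twist that you propose as a first alternative is not needed and is not carried out.
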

\begin{proof}
See \cite{Barth}, Chap. I, parag. 17 \& 18. For the uniqueness of
the invertible sheaf $\mathcal{L}$ : suppose that $S(\mathcal{L})$
and $S(\mathcal{L}')$ are Fano surfaces. By construction, they contain
$12$ elliptic curves, $3$ of them are isomorphic to $E_{\lambda}$
and cut the $9$ others, thus by Lemma \ref{lem:12 courbes ell caracterisent}
$S(\mathcal{L})$ and $S(\mathcal{L}')$ are isomorphic to $S_{\lambda}$,
therefore: $\mathcal{L}=\mathcal{L}'$. \end{proof}
\begin{rem}
The remaining $80$ surfaces $S(\mathcal{L})$ are thus {}``fake''
Fano surfaces and are on different components of the moduli space
of surfaces with $c_{1}^{2}=45$ and $c_{2}=27$.
\end{rem}
Let $\alpha$ be a third primitive root of unity. Let us now study
the Néron-Severi group of $S$.
\begin{prop}
\label{il existe une infinit=0000E9 de NS de rang 25} 1) Suppose
that $E_{\lambda}$ has no complex multiplication. The Néron-Severi
group of $S_{\lambda}$ has rank $12$. The sub-lattice generated
by the elliptic curves and the class of an incidence divisor $C_{s}$
has rank $12$ and discriminant $2.3^{10}$.\\
2) If $E_{\lambda}$ has complex multiplication by a field different
from $\mathbb{Q}(\alpha)$, then the Néron-Severi group of $S_{\lambda}$
has rank $13$.\\
3) If $E_{\lambda}$ has complex multiplication by $\mathbb{Q}(\alpha)$
then the Néron-Severi group of $S_{\lambda}$ has rank $25$.\end{prop}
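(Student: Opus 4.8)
The plan is to reduce the statement to a computation on the Albanese variety $A_{\lambda}$ of $S_{\lambda}$: by Theorem~\ref{intersection de deux diviseurs}~d) one has $\rho_{S_{\lambda}}=\rho_{A_{\lambda}}$, so it suffices to find the isogeny type of $A_{\lambda}$ and the Picard numbers of its factors. First I would identify $A_{\lambda}$ up to isogeny. Exactly as in the proof of Proposition~\ref{12 courbeselliptiques, divi canonique =00003D somme} one computes, from the vertices of the twelve cones and \cite{Roulleau1}, that the tangent line at $0$ of $E_{45}^{\beta}\hookrightarrow A_{\lambda}$ is $\mathbb{C}(\beta e_{4}-\beta^{2}e_{5})\subset\langle e_{4},e_{5}\rangle$, and that of $E_{ij}^{\beta}\hookrightarrow A_{\lambda}$ (for $1\le i<j\le 3$) is $\mathbb{C}(e_{i}-\beta e_{j})\subset\langle e_{1},e_{2},e_{3}\rangle$. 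Hence three of the nine curves $E_{ij}^{\beta}$ with $\mathbb{C}$-independent tangent lines, say $E_{12}^{1},E_{13}^{\alpha},E_{23}^{1}$, generate an abelian subvariety $A'\subset A_{\lambda}$ of dimension $3$; since each of them is isomorphic to the Fermat cubic curve $E_{0}$ by \cite{Roulleau1}, the addition map $E_{0}^{3}\to A'$ is an isogeny, so $A'\sim E_{0}^{3}$. Similarly $E_{45}^{1}$ and $E_{45}^{\alpha}$ have independent tangent lines and are isomorphic to $E_{\lambda}$, so they generate an abelian surface $B\subset A_{\lambda}$ with $B\sim E_{\lambda}^{2}$ (equivalently, $B\sim Y$ via the degree-$3$ map $g$ of Proposition~\ref{12 courbeselliptiques, divi canonique =00003D somme}). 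As $\dim A'+\dim B=5=\dim A_{\lambda}$ and $T_{0}A'\oplus T_{0}B=\langle e_{1},e_{2},e_{3}\rangle\oplus\langle e_{4},e_{5}\rangle=T_{0}A_{\lambda}$, the addition map $A'\times B\to A_{\lambda}$ is an isogeny, so $A_{\lambda}$ is isogenous to $E_{0}^{3}\times E_{\lambda}^{2}$.

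Next I would read off $\rho_{A_{\lambda}}$, using that $\rho$ is an isogeny invariant and the classical identity $\NS(\prod_{i}E_{i})\otimes\mathbb{Q}\cong\bigoplus_{i}\mathbb{Q}\oplus\bigoplus_{i<j}\bigl(\Hom(E_{i},E_{j})\otimes\mathbb{Q}\bigr)$, in which $\Hom(E_{i},E_{j})\otimes\mathbb{Q}$ vanishes when $E_{i}\not\sim E_{j}$, equals $\mathbb{Q}$ when $E_{i}\sim E_{j}$ without complex multiplication, and is the ($2$-dimensional) CM field when $E_{i}\sim E_{j}$ with complex multiplication. Since $E_{0}$ has complex multiplication by $\mathbb{Q}(\alpha)=\mathbb{Q}(\sqrt{-3})$, $\NS(E_{0}^{3})\otimes\mathbb{Q}$ has dimension $3+3\cdot 2=9$. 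If $E_{\lambda}$ has no complex multiplication, then $E_{\lambda}\not\sim E_{0}$ and $\dim\NS(E_{\lambda}^{2})\otimes\mathbb{Q}=2+1=3$, so $\rho_{S_{\lambda}}=\rho_{A_{\lambda}}=12$. If $E_{\lambda}$ has complex multiplication by a field $K\ne\mathbb{Q}(\alpha)$, then still $E_{\lambda}\not\sim E_{0}$ (isogenous elliptic curves have isomorphic endomorphism algebras), but $\dim\NS(E_{\lambda}^{2})\otimes\mathbb{Q}=2+2=4$, so $\rho_{S_{\lambda}}=13$. If $E_{\lambda}$ has complex multiplication by $\mathbb{Q}(\alpha)$, then $E_{\lambda}\sim E_{0}$ (all elliptic curves with CM by a fixed imaginary quadratic field are isogenous), so $A_{\lambda}\sim E_{0}^{5}$ and $\rho_{S_{\lambda}}=5+\binom{5}{2}\cdot 2=25$. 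These three cases being exhaustive, this proves the rank assertions of 1), 2) and 3).

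To finish 1), I would compute the lattice $\Lambda$ generated by the twelve elliptic curves and $C_{s}$. By Lemma~\ref{lem:equivalence num de la fibre} each elliptic curve $E$ satisfies $E^{2}=-3$ and $EC_{s}=1$, and $C_{s}^{2}=5$; moreover the nine curves $E_{ij}^{\beta}$ ($1\le i<j\le 3$) are pairwise disjoint, the three curves $E_{45}^{\beta}$ are pairwise disjoint, and $E_{45}^{\beta}E_{ij}^{\gamma}=1$. So the Gram matrix $G$ of the twelve curves is, in block form, $-3I_{9}$ on the $E_{ij}^{\beta}$-block, $-3I_{3}$ on the $E_{45}^{\beta}$-block, and the all-ones matrix between them; a short Schur-complement (or eigenvalue) computation gives $\det G=-18\cdot 3^{10}=-2\cdot 3^{12}\ne 0$. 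In particular the twelve curves are $\mathbb{Q}$-independent, so $\rho_{S_{\lambda}}\ge 12$, which together with the previous paragraph forces $\rho_{S_{\lambda}}=12$ and makes the twelve curves a $\mathbb{Q}$-basis of $\NS(S_{\lambda})$. Solving $G\mathbf{v}=\mathbf{1}$ (the vector of intersection numbers $C_{s}\cdot E$, all equal to $1$) then gives $C_{s}=\tfrac13\sum_{1\le i<j\le 3,\ \beta^{3}=1}E_{ij}^{\beta}+\tfrac23\sum_{\beta^{3}=1}E_{45}^{\beta}$ (consistently $C_{s}^{2}=\tfrac13\cdot 9+\tfrac23\cdot 3=5$, and $3C_{s}$ is the canonical divisor $K$ of Corollary~\ref{Corollaire reformulation revetement}). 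Hence $3C_{s}$ lies in the lattice $L_{0}$ spanned by the twelve curves while $C_{s}$ and $2C_{s}$ do not, so $[\Lambda:L_{0}]=3$; thus $\Lambda$ has rank $12$ and discriminant $|\det G|/3^{2}=2\cdot 3^{12}/9=2\cdot 3^{10}$.

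I expect the first step to be the main obstacle: everything downstream is routine — a Hodge-theoretic count of $(1,1)$-classes on a product of elliptic curves, and an elementary determinant computation — whereas identifying $A_{\lambda}$ up to isogeny requires combining carefully the tangent-line computations for the twelve elliptic curves with the fact from \cite{Roulleau1} that the nine curves $E_{ij}^{\beta}$ ($1\le i<j\le 3$) are isomorphic to the Fermat cubic while the three curves $E_{45}^{\beta}$ are isomorphic to $E_{\lambda}$, and then checking by a dimension count that the abelian subvarieties they generate are transverse and together exhaust $A_{\lambda}$.
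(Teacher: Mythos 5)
Your proof is correct and follows the same route as the paper: reduce to the Albanese variety via Theorem \ref{intersection de deux diviseurs}, use the isogeny $A_{\lambda}\sim E_{0}^{3}\times E_{\lambda}^{2}$, and count $(1,1)$-classes on the product according to the CM type of $E_{\lambda}$. The only difference is that you supply details the paper delegates or omits --- you derive the isogeny from the tangent lines of the twelve curves rather than citing \cite{Roulleau1}, and you carry out the Gram-determinant and index-$3$ computation for the discriminant $2\cdot3^{10}$, which the paper's proof does not address explicitly; both computations check out.
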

\begin{proof}
We can easily compute the Picard number of the Abelian variety $E_{0}^{3}\times E_{\lambda}^{2}$.
By \cite{Roulleau1}, the Albanese variety $A$ of $S_{\lambda}$
is isogenous to $E_{0}^{3}\times E_{\lambda}^{2}$, thus their Néron-Severi
groups have same rank and according to the cases 1), 2) and 3), this
rank is $12$, $13$ or $25$. Then, Theorem \ref{intersection de deux diviseurs}
implies that the Picard number of $S_{\lambda}$ is $12$, $13$ or
$25$ respectively.
\end{proof}

\section{The Fano surface of the Fermat cubic.}

\subsection{Elliptic curve configuration of the Fano surface of the Fermat cubic.\label{paraggraphe 441}}

Let $S$ be the Fano surface of the Fermat cubic $F\hookrightarrow\mathbb{P}^{4}=\mathbb{P}(H^{o}(S,\Omega_{S})^{*})$:\[
x_{1}^{3}+x_{2}^{3}+x_{3}^{3}+x_{4}^{3}+x_{5}^{3}=0.\]
Let $e_{1},\dots,e_{5}\in H^{0}(S,\Omega_{S})^{*}$ be the dual basis
of basis of $x_{1},\dots,x_{5}$. Let $\mu_{3}$ be the group of third
roots of unity, let $1\leq i<j\leq5$ and let $\beta\in\mu_{3}$.
The point: \[
p_{ij}^{\beta}=\mathbb{C}(e_{i}-\beta e_{j})\in\mathbb{P}^{4}\]
is the vertex of a cone on the cubic $F$. We denote by $E_{ij}^{\beta}\hookrightarrow S$
the elliptic curve that parametrizes the lines on that cone. The complex
reflection group $G(3,3,5)$ (in the basis $e_{1},\dots,e_{5}$ of
$H^{0}(\Omega_{S})^{*}$) is the group generated by the permutation
matrices and the matrix with diagonal elements $\alpha,\alpha,\alpha,\alpha,\alpha^{2}$
(where $\alpha\in\mu_{3}$ is a primitive root). \\
We recall the following:
\begin{prop}
\label{automor} The Fano surface of the Fermat cubic possesses $30$
smooth curves of genus $1$ numbered :\[
E_{ij}^{\beta},1\leq i<j\leq5,\,\beta\in\mu_{3}.\]
1) Each smooth genus $1$ curve of the Fano surface is isomorphic
to the Fermat plane cubic $\mathbb{E}:=\{x^{3}+y^{3}+z^{3}=0\}$.
\\
2) Let $E_{ij}^{\gamma}$ and $E_{st}^{\beta}$ be two smooth curves
of genus $1$. We have : \[
E_{ij}^{\beta}E_{st}^{\gamma}=\left\{ \begin{array}{cc}
1 & \textrm{if }\{i,j\}\cap\{s,t\}=\emptyset\\
-3 & \textrm{if }E_{ij}^{\beta}=E_{st}^{\gamma}\\
0 & \textrm{else}.\end{array}\right.\]
3) Let $E\hookrightarrow S$ be a smooth curve of genus $1$. The
fibration $\gamma_{E}$ has $20$ sections and contracts $9$ elliptic
curves.\\
4) The automorphism group of $S$ is isomorphic to the complex
reflection $G(3,3,5)$.\end{prop}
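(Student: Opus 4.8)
The plan is to reduce everything to a classification of the cones lying on the Fermat cubic $F=\{x_1^3+\dots+x_5^3=0\}$, using the general results of the preliminary section. By Lemma~\ref{lem:The-base-of} the smooth genus-$1$ curves of $S$ correspond bijectively to the cones on $F$, hence to the points $p\in\mathbb P^4$ which are vertices of such cones. Expanding $f=\sum x_i^3$ around $p=(a_1:\dots:a_5)$, the linear term is the equation of $T_pF$, the quadratic term is $3\sum a_i v_i^2$, and the cubic term is $\sum v_i^3$; a short computation (in the spirit of the cone lemmas above) shows that $p$ is the vertex of a cone if and only if $p\in F$ and the diagonal quadratic form $\sum a_i v_i^2$ vanishes identically on the hyperplane $T_pF=\{\sum a_i^2 v_i=0\}$. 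The arithmetic input is that a non-degenerate diagonal quadratic form in $k$ variables has rank $\geq k-2$ when restricted to a hyperplane, which forces at most two of the $a_i$ to be non-zero; combined with $p\in F$ this leaves exactly the $30$ points $p_{ij}^{\beta}=\mathbb C(e_i-\beta e_j)$, and a direct check confirms each is a vertex, the base of the corresponding cone being the Fermat plane cubic in the remaining three coordinates. This gives the count $30=\binom{5}{2}\cdot 3$ and statement $(1)$: every smooth genus-$1$ curve of $S$ is one of the $E_{ij}^{\beta}$ and is isomorphic to $\mathbb E$.

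For $(2)$, the value $(E_{ij}^{\beta})^2=-3$ is Lemma~\ref{lem:equivalence num de la fibre}. Given two distinct curves $E_{ij}^{\beta}$ and $E_{st}^{\gamma}$, a point of their intersection is a line of $F$ lying on both cones, hence containing both vertices $p_{ij}^{\beta}$ and $p_{st}^{\gamma}$; the only candidate is the line $\overline{p_{ij}^{\beta}p_{st}^{\gamma}}$, and a one-line substitution shows that this line lies on $F$ precisely when $\{i,j\}\cap\{s,t\}=\emptyset$. Hence if the index pairs meet the curves are disjoint and the intersection number is $0$. If $\{i,j\}\cap\{s,t\}=\emptyset$, they meet only at the point $s_0$ with $L_{s_0}=\overline{p_{ij}^{\beta}p_{st}^{\gamma}}$; by the Tangent Bundle Theorem $T_{s_0}S=\langle e_i-\beta e_j,\ e_s-\gamma e_t\rangle$, while the tangent line to $E_{ij}^{\beta}$ at $s_0$ is the line $\mathbb C(e_i-\beta e_j)$ of $H^0(S,\Omega_S)^{*}$ defining the cone vertex $p_{ij}^{\beta}$ (Lemma~\ref{lem:The-differentials-}), and similarly that of $E_{st}^{\gamma}$ is $\mathbb C(e_s-\gamma e_t)$; these differ, so the intersection is transverse and equals $1$. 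This proves $(2)$.

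Statement $(3)$ is then combinatorial. For an elliptic curve $E\hookrightarrow S$, Lemma~\ref{lem:equivalence num de la fibre} gives $\gamma_E^{*}(\mathrm{pt})\equiv C_s-E$, so for any elliptic curve $E'$ one has $E'\cdot\gamma_E^{*}(\mathrm{pt})=C_sE'-EE'=1-EE'$; thus $E'$ is contracted by $\gamma_E$ iff $EE'=1$ and $\gamma_E|_{E'}$ has degree $1$ iff $EE'=0$. By $(2)$, for $E=E_{ij}^{\beta}$ there are exactly $\binom{3}{2}\cdot 3=9$ curves with $EE'=1$ and $30-1-9=20$ with $EE'=0$; the latter map isomorphically onto $E$, hence are sections, and since every section is a smooth genus-$1$ curve these are all of them — $20$ sections and $9$ contracted elliptic curves. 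For $(4)$, the Tangent Bundle Theorem makes the cubic $F=\psi(\mathbb P(T_S))\subset\mathbb P(H^0(S,\Omega_S)^{*})$ intrinsic to $S$, so $\Aut(S)$ acts linearly on $\mathbb P^4$ preserving $F$; as a smooth cubic threefold has only linear automorphisms (its hyperplane class generates its Picard group) and $\Aut(F)$ acts on the Fano surface of lines, one gets $\Aut(S)\cong\Aut(F)$. A projective automorphism of the Fermat cubic lifts to $g\in GL_5$ with $g^{*}(\sum x_i^3)=\sum x_i^3=\sum(g^{*}x_i)^3$; an apolarity argument — the degree-$2$ part of the apolar ideal of $\sum x_i^3$ is spanned by the $y_iy_j$ ($i<j$), whose scheme of zeros is exactly the five coordinate points — forces the linear forms $g^{*}x_i$ to be the $x_j$ up to permutation and multiplication by $\mu_3$. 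Hence $\Aut(F)$ is the group of monomial matrices with entries in $\mu_3$ modulo scalars; since that matrix group is the direct product $G(3,3,5)\times(\mu_3\cdot I)$, we conclude $\Aut(S)\cong\Aut(F)\cong G(3,3,5)$.

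The two genuinely non-formal steps — which I expect to be the main obstacles — are the cone classification and the identifiability of the Fermat form. In the first I must get the \emph{exact} shape of the vertices (precisely two non-zero coordinates, with the cube-sum relation coming from $p\in F$), which rests on the rank estimate for restrictions of diagonal quadratic forms; in the second, the uniqueness of the Waring decomposition of $\sum x_i^3$ into five cubes is the crux of $(4)$, for which I would either invoke the classical result on the Fermat form or give the apolarity argument sketched above. The remaining points — the transversality in $(2)$, the counts in $(3)$, and the isomorphism $\Aut(S)\cong\Aut(F)$ — are routine once the lemmas of the preliminary section are in hand.
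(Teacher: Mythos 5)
Your proposal is correct, and it is worth noting that it is doing strictly more work than the paper does: the paper's ``proof'' of this proposition consists of the citation ``See \cite{Roulleau1}'' for parts 1)--3) and a one-line hint for 4), so your argument is an independent, self-contained reconstruction. For 1)--3) your route --- classifying the cone vertices as the points $p\in F$ where the polar quadric $\sum a_iv_i^2$ vanishes on $T_pF$ (the rank bound correctly forces exactly two nonzero coordinates, and $p\in F$ then gives both the relation $a_j=-\beta a_i$, $\beta^3=1$, and the vanishing of the restricted quadric), identifying an intersection point of two elliptic curves with the line joining the two vertices, and getting transversality from the distinct tangent directions $\mathbb{C}(e_i-\beta e_j)$ and $\mathbb{C}(e_s-\gamma e_t)$ inside $T_{s_0}S$ --- is sound and close in spirit to what the cited reference must do. Where you genuinely diverge is 4): the paper pins down $\Aut(F)$ by the rigidity of the configuration of the $30$ vertices and the $100$ lines through three of them (a consequence of parts 1)--2)), whereas you use the apolarity/Waring argument that the degree-$2$ part of the apolar ideal of $\sum x_i^3$ cuts out the five coordinate points, forcing monomiality; both are classical and correct, and the identification of the monomial group modulo scalars with $G(3,3,5)$ (using $\gcd(3,5)=1$ so that $\mu_3 I\cap G(3,3,5)=\{I\}$) is right. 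Two small points to make explicit in a write-up: in 3) the identity $C_sE'=1$ for \emph{arbitrary} $s$ needs the algebraic equivalence of the incidence divisors $C_s$ (Lemma \ref{lem:equivalence num de la fibre} only gives it for $s\in E'$), and the assertion that the $20$ curves with $EE'=0$ exhaust the sections relies on your cone classification showing the $E_{ij}^{\beta}$ are \emph{all} the smooth genus-$1$ curves of $S$, not merely that each such curve is isomorphic to $\mathbb{E}$.
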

\begin{proof}
See \cite{Roulleau1}. For 4), we use the fact that an automorphism
of $F$ must preserve the configuration of the $30$ vertices of cones
and the $100$ lines that contain $3$ such vertices.
\end{proof}

\subsection{The Albanese variety of the Fano surface of the Fermat cubic.}

Let $S$ be the Fano surface of the Fermat cubic $F$. Our main aim
is to compute the full Néron-Severi group of $S$: this will be done
in the next paragraph. We first need to study the Albanese variety
$A$ of $S$.

\subsubsection{Construction of fibrations.}

In order to know the period lattice of the Albanese variety of $S$,
we construct morphisms of the Fano surface onto an elliptic curve
and we study their properties.

Let $\vartheta:S\rightarrow A$ be a fixed Albanese map. It is an
embedding and we consider $S$ as a sub-variety of $A$.\\
 Recall that if $\tau$ is an automorphism of $S$, we denote by
$\tau'\in\Aut(A)$ the unique automorphism such that $\tau'\circ\vartheta=\vartheta\circ\tau$. 

By \cite{Roulleau1}, the reflection group $G(3,3,5)$ is the analytic
representation of the automorphisms $\tau',\,\tau\in\Aut(S)$. The
ring $\mathbb{Z}[G(3,3,5)]\subset\End(H^{0}(\Omega_{S})^{*})$ is
then the analytic representation of a sub-ring of endomorphisms of
the Abelian variety $A$. \\
Let us denote by $\Lambda_{A}^{*}$ the rank $5$ sub-$\mathbb{Z}[\alpha]$-module
of $H^{0}(\Omega_{S})$ generated by the forms: \[
x_{i}-\beta x_{j}\,(i<j,\,\beta\in\mu_{3}).\]
Let $\ell$ be an element of $\Lambda_{A}^{*}$. The endomorphism
of $H^{0}(\Omega_{S})^{*}=H^{0}(S,\Omega_{S})^{*}$ defined by $x\rightarrow\ell(x)(e_{1}-e_{2})$
is an element of $\mathbb{Z}[G(3,3,5)]$. Let us denote by $\Gamma_{\ell}:A\rightarrow\mathbb{E}$
the corresponding morphism of Abelian varieties where $\mathbb{E}\hookrightarrow A$
is the elliptic curve with tangent space $\mathbb{C}(e_{1}-e_{2})$.
We denote by $\gamma_{\ell}:S\rightarrow\mathbb{E}$ the morphism
$\Gamma_{\ell}\circ\vartheta$.

For $1\leq i<j\leq5$ and $\beta\in\mu_{3}$, the space: \[
\mathbb{C}(e_{i}-\beta e_{j})\subset H^{0}(\Omega_{S})^{*}\]
 is the tangent space to the elliptic curve $E_{ij}^{\beta}\hookrightarrow A$
translated to $0$ (Lemma \ref{lem:The-differentials-}).\\
Let $H_{1}(A,\mathbb{Z})\subset H^{0}(\Omega_{S})^{*}$ be the
period lattice of $A$. The elliptic curve $\mathbb{E}$ has complex
multiplication by the principal ideal domain $\mathbb{Z}[\alpha]$.
There exists $c\in\mathbb{C}^{*}$ such that : \[
H_{1}(A,\mathbb{Z})\cap\mathbb{C}(e_{1}-e_{2})=\mathbb{Z}[\alpha]c(e_{1}-e_{2}).\]
Up to the basis change of $e_{1},\dots,e_{5}$ by $ce_{1},\dots,ce_{5}$,
we may suppose $c=1$. Since $G(3,3,5)$ acts transitively on the
$30$ spaces $\mathbb{C}(e_{i}-\beta e_{j})$, we have: \[
H_{1}(A,\mathbb{Z})\cap\mathbb{C}(e_{i}-\beta e_{j})=\mathbb{Z}[\alpha](e_{i}-\beta e_{j}).\]
We define the Hermitian product of two forms $\ell,\ell'\in\Lambda_{A}^{*}$
by : \[
\left\langle \ell,\ell'\right\rangle :=\sum_{k=1}^{k=5}\ell(e_{k})\overline{\ell'(e_{k})},\]
 and the norm of $\ell$ by: $\left\Vert \ell\right\Vert =\sqrt{\left\langle \ell,\ell\right\rangle }$.
Let $C_{s}$ be an incidence divisor.
\begin{thm}
\label{toutes les fibrations}Let $\ell$ be a non zero element of
$\Lambda_{A}^{*}$ and let $F_{\ell}$ be a fibre of $\gamma_{\ell}$.
\\
1) The intersection number of $F_{\ell}$ and $E_{ij}^{\beta}\hookrightarrow S$
is equal to: \[
E_{ij}^{\beta}F_{\ell}=|\ell(e_{i}-\beta e_{j})|^{2}.\]
 2) We have: $F_{\ell}C_{s}=2\left\Vert \ell\right\Vert ^{2}$ and
the fibre $F_{\ell}$ has genus:\[
g(F_{\ell})=1+3\left\Vert \ell\right\Vert ^{2}.\]
3) Let $\ell$ and $\ell'$ be two linearly independent elements of
$\Lambda_{A}^{*}\subset H^{0}(\Omega_{S})$. The morphism $\tau_{\ell,\ell'}=(\gamma_{\ell},\gamma_{\ell'}):S\rightarrow\mathbb{E}\times\mathbb{E}$
has degree equal to $F_{\ell}F_{\ell'}$ and : \[
F_{\ell}F_{\ell'}=\left\Vert \ell\right\Vert ^{2}\left\Vert \ell'\right\Vert ^{2}-\left\langle \ell,\ell'\right\rangle \left\langle \ell',\ell\right\rangle .\]
\end{thm}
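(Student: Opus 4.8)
The plan is to pull each fibre $F_{\ell}$ back from a divisor on $A$ and then to argue entirely by intersection theory on the abelian variety, through Theorem~\ref{intersection de deux diviseurs}. Let $\mathbb{E}\hookrightarrow A$ be the elliptic curve with tangent space $\mathbb{C}(e_{1}-e_{2})$, which after the normalisation made above is $\mathbb{C}(e_{1}-e_{2})/\mathbb{Z}[\alpha](e_{1}-e_{2})$, and set $D_{\ell}:=\Gamma_{\ell}^{*}(0_{\mathbb{E}})$, a divisor on $A$, so that $F_{\ell}=\gamma_{\ell}^{*}(\mathrm{pt})=\vartheta^{*}D_{\ell}$; since two distinct fibres of $\gamma_{\ell}$ are disjoint and algebraically equivalent, $F_{\ell}^{2}=0$. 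For 1), I restrict $\gamma_{\ell}$ to $E_{ij}^{\beta}$: by Lemma~\ref{lem:The-differentials-} the curve $\vartheta(E_{ij}^{\beta})\subset A$ is a translate of the abelian subvariety with tangent space $\mathbb{C}(e_{i}-\beta e_{j})$, and by construction $H_{1}(A,\mathbb{Z})\cap\mathbb{C}(e_{i}-\beta e_{j})=\mathbb{Z}[\alpha](e_{i}-\beta e_{j})$, so $E_{ij}^{\beta}\simeq\mathbb{C}/\mathbb{Z}[\alpha]$; under this identification and that of $\mathbb{E}$, the relation $d\Gamma_{\ell}(x)=\ell(x)(e_{1}-e_{2})$ shows that $\gamma_{\ell}|_{E_{ij}^{\beta}}$ is, up to a translation, multiplication by the element $\ell(e_{i}-\beta e_{j})$ of $\mathbb{Z}[\alpha]=\End(\mathbb{E})$ (it lies in $\mathbb{Z}[\alpha]$ because $\ell\in\Lambda_{A}^{*}$). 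Hence $\gamma_{\ell}|_{E_{ij}^{\beta}}$ is either constant (when $\ell(e_{i}-\beta e_{j})=0$) or finite of degree $|\ell(e_{i}-\beta e_{j})|^{2}$, so $E_{ij}^{\beta}F_{\ell}=E_{ij}^{\beta}\cdot\gamma_{\ell}^{*}(\mathrm{pt})=|\ell(e_{i}-\beta e_{j})|^{2}$.

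Next I determine the Hermitian form $H$ of the principal polarisation $\Theta$ on $A=\mathbb{C}^{5}/H_{1}(A,\mathbb{Z})$. By Theorem~\ref{intersection de deux diviseurs}(c), $\vartheta^{*}\Theta=2C_{s}$, and by Lemma~\ref{lem:equivalence num de la fibre} every elliptic curve $E\hookrightarrow S$ satisfies $C_{s}E=1$; hence $\deg\bigl(\Theta|_{\vartheta(E_{ij}^{\beta})}\bigr)=2$ for all $30$ curves. Computing this degree as the value of $\mathrm{Im}\,H$ on a $\mathbb{Z}$-basis of $\mathbb{Z}[\alpha](e_{i}-\beta e_{j})$ and using $\mathrm{Im}(\overline{\alpha})=-\tfrac{\sqrt{3}}{2}$ gives $H(e_{i}-\beta e_{j},e_{i}-\beta e_{j})=\tfrac{4}{\sqrt{3}}$ for all $i<j$ and all $\beta\in\mu_{3}$. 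Writing $h_{kl}=H(e_{k},e_{l})$, the three equations coming from $\beta=1,\alpha,\alpha^{2}$ force $h_{kl}=0$ for $k\neq l$, and then the $\binom{5}{2}$ diagonal equations force $h_{kk}=\tfrac{2}{\sqrt{3}}$ for every $k$; thus $H=\tfrac{2}{\sqrt{3}}\,\mathrm{Id}$ in the basis $e_{1},\dots,e_{5}$ (consistently $4C_{s}^{2}=(\vartheta^{*}\Theta)^{2}=\int_{A}\tfrac{1}{3!}c_{1}(\Theta)^{5}=20$). In particular, under the standard dictionary between line bundles on $A$ and Hermitian forms, $c_{1}(D_{\ell})$ corresponds to the rank one form $\tfrac{2}{\sqrt{3}}\bigl(\ell(e_{k})\overline{\ell(e_{l})}\bigr)_{k,l}$.

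Parts 2) and 3) are then linear algebra on the principally polarised $(A,\Theta)$. Since $\int_{A}\tfrac{1}{5!}c_{1}(\Theta)^{5}=1$, the mixed-discriminant identity for Hermitian forms gives, for line bundles $L_{1},L_{2}$ on $A$ and $M_{i}:=H^{-1}\cdot(\textrm{Hermitian form of }L_{i})$, the two formulas $\int_{A}\tfrac{1}{3!}c_{1}(\Theta)^{3}\wedge c_{1}(L_{1})\wedge c_{1}(L_{2})=\mathrm{tr}(M_{1})\mathrm{tr}(M_{2})-\mathrm{tr}(M_{1}M_{2})$ and $\int_{A}\tfrac{1}{3!}c_{1}(\Theta)^{4}\wedge c_{1}(L_{1})=4\,\mathrm{tr}(M_{1})$. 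Applying these to $D_{\ell}$ and $D_{\ell'}$, one has $M_{\ell}=\bigl(\ell(e_{k})\overline{\ell(e_{l})}\bigr)_{k,l}$, so $\mathrm{tr}(M_{\ell})=\|\ell\|^{2}$ and $\mathrm{tr}(M_{\ell}M_{\ell'})=\langle\ell,\ell'\rangle\langle\ell',\ell\rangle$; hence Theorem~\ref{intersection de deux diviseurs}(a) yields $F_{\ell}F_{\ell'}=\vartheta^{*}D_{\ell}\cdot\vartheta^{*}D_{\ell'}=\|\ell\|^{2}\|\ell'\|^{2}-\langle\ell,\ell'\rangle\langle\ell',\ell\rangle$ (and $F_{\ell}^{2}=0$ when $\ell'=\ell$), and $2\,C_{s}F_{\ell}=\vartheta^{*}\Theta\cdot\vartheta^{*}D_{\ell}=4\|\ell\|^{2}$, i.e. $C_{s}F_{\ell}=2\|\ell\|^{2}$. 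For the genus, adjunction together with the classical relation $K_{S}\equiv 3C_{s}$ on a Fano surface (compatible with $K_{S}E=3$ via adjunction on an elliptic curve $E\subset S$, since $E^{2}=-3$) gives $2p_{a}(F_{\ell})-2=F_{\ell}^{2}+K_{S}F_{\ell}=6\|\ell\|^{2}$, that is $p_{a}(F_{\ell})=1+3\|\ell\|^{2}$, which is the geometric genus of a general fibre. Finally, if $\ell,\ell'$ are linearly independent then the differential of $(\Gamma_{\ell},\Gamma_{\ell'})$ has rank $2$, so $\tau_{\ell,\ell'}(S)$ is a surface; the fibre over a general point $\tau_{\ell,\ell'}(s)$ of it is the transverse intersection of the general fibre of $\gamma_{\ell}$ through $s$ with the general fibre of $\gamma_{\ell'}$ through $s$, whence $\deg\tau_{\ell,\ell'}=F_{\ell}F_{\ell'}$, which is $>0$ by the Cauchy--Schwarz inequality.

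The only genuinely non-formal point is the determination of $H$: it rests on the input $C_{s}E=1$ from Lemma~\ref{lem:equivalence num de la fibre} and on the normalisation $H_{1}(A,\mathbb{Z})\cap\mathbb{C}(e_{i}-\beta e_{j})=\mathbb{Z}[\alpha](e_{i}-\beta e_{j})$, and it requires some care with the orientation conventions relating $c_{1}$ of a line bundle on $A$ to the imaginary part of its Hermitian form. After that, everything is a direct application of Theorem~\ref{intersection de deux diviseurs} and of the mixed-discriminant identity on $(A,\Theta)$.
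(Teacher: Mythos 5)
Your proof is correct, and while parts 1) and 3) follow essentially the paper's argument (restriction of $\Gamma_{\ell}$ to $E_{ij}^{\beta}$ is multiplication by $\ell(e_{i}-\beta e_{j})$; the degree of $\tau_{\ell,\ell'}$ via pullback of the two rulings of $\mathbb{E}\times\mathbb{E}$), you take a genuinely different route to the central computation, namely the determination of the Hermitian form $H$ of $\Theta$. The paper first proves part 2) by purely surface-theoretic means: it sums the intersection numbers from part 1) over all $30$ elliptic curves, uses the Clemens--Griffiths relations $\Sigma\equiv 2K_{S}$ and $K_{S}\equiv 3C_{s}$ to get $g(F_{\ell})=1+3\Vert\ell\Vert^{2}$ and $C_{s}F_{\ell}=2\Vert\ell\Vert^{2}$, and only then turns to $A$: $G(3,3,5)$-invariance forces $H=\tfrac{2a}{\sqrt{3}}I_{5}$ up to an unknown scalar $a$, which is pinned down by matching the already-known value of $C_{s}F_{\ell}$. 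You instead determine $H$ directly and without any free scalar, by computing $\deg\bigl(\Theta|_{\vartheta(E_{ij}^{\beta})}\bigr)=2C_{s}E_{ij}^{\beta}=2$ on each of the $30$ curves against the normalised sublattices $\mathbb{Z}[\alpha](e_{i}-\beta e_{j})$, and solving the resulting linear system for the entries $h_{kl}$ (the three values of $\beta$ kill the off-diagonal terms, and positive-definiteness fixes the sign ambiguity you flag); you then derive $C_{s}F_{\ell}$ and the genus as consequences, reversing the paper's logical order and dispensing with the relation $\Sigma\equiv 2K_{S}$ altogether. Your trace identity $\tfrac{1}{3!}\Theta^{3}L_{1}L_{2}=\mathrm{tr}(M_{1})\mathrm{tr}(M_{2})-\mathrm{tr}(M_{1}M_{2})$ is just a cleaner packaging of the paper's explicit wedge-product computation in Lemma \ref{la forme Q}. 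What your approach buys is independence from the $\Sigma\equiv 2K_{S}$ input and a self-contained normalisation of $c_{1}(\Theta)$; what the paper's buys is that part 2) is established before any lattice or polarisation data on $A$ is needed, which is convenient since that same computation is reused later to rule out candidate period lattices.
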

\begin{rem}
The known intersection numbers $F_{\ell}E_{ij}^{\beta}$ and $F_{\ell}C_{s}$
enables us to write the numerical equivalence class of the fibre $F_{\ell}$
in the $\mathbb{Z}$-basis given in Theorem \ref{neronseveri} below. 
\end{rem}
Let us prove Theorem \ref{toutes les fibrations}. The part 1) is
a trick:\\
For $1\leq i<j\leq5$ and $\beta\in\mu_{3}$, we can interpret
geometrically the intersection number $E_{ij}^{\beta}F_{\ell}$ as
the degree of the restriction of $\gamma_{\ell}$ to $E_{ij}^{\beta}\hookrightarrow S$.
It is also the degree of the restriction of $\Gamma_{\ell}$ to $E_{ij}^{\beta}\hookrightarrow A$.
As this restriction is the multiplication map by $\ell(e_{i}-\beta e_{j})$,
the degree of the morphism $\Gamma_{\ell}$ on $E_{ij}^{\beta}$ is
equal to $|\ell(e_{i}-\beta e_{j})|^{2}$. Thus $F_{\ell}E_{ij}^{\beta}=|\ell(e_{i}-\beta e_{j})|^{2}$.

Let us study the genus of $F_{\ell}$:
\begin{lem}
\label{genre de la fibre f ell}The fibre of $F_{\ell}$ has genus
$1+3\left\Vert \ell\right\Vert ^{2}$ and $C_{s}F_{\ell}=2\left\Vert \ell\right\Vert ^{2}$
(where $s$ is a point of $S$ and $C_{s}$ the incidence divisor).\end{lem}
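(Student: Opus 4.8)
\emph{Approach.} The plan is to deduce both assertions from part 1) of Theorem~\ref{toutes les fibrations}, i.e. from the formula $E_{ij}^{\beta}F_{\ell}=|\ell(e_{i}-\beta e_{j})|^{2}$ proved just above, by expressing the incidence class $C_{s}$ (and the canonical class $K_{S}$) numerically in terms of the $30$ elliptic curves. Precisely, I would first establish the numerical equivalences
\[
\Sigma:=\sum_{1\leq i<j\leq5,\ \beta\in\mu_{3}}E_{ij}^{\beta}\ \equiv\ 6\,C_{s},\qquad K_{S}\ \equiv\ 3\,C_{s}
\]
on $S$, and then intersect the first one with a fibre $F_{\ell}$.

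\emph{Step 1: the two numerical equivalences.} Here I would use the Hodge index theorem together with the intersection table of Proposition~\ref{automor}(2) and Lemma~\ref{lem:equivalence num de la fibre}. Each of the $30$ curves has $(E_{ij}^{\beta})^{2}=-3$, and $E_{ij}^{\beta}E_{st}^{\gamma}=1$ exactly for the $270$ ordered pairs with $\{i,j\}\cap\{s,t\}=\emptyset$, so $\Sigma^{2}=30\cdot(-3)+270=180$. Since the classes $C_{s}$ are all numerically equivalent (Theorem~\ref{intersection de deux diviseurs}c), Lemma~\ref{lem:equivalence num de la fibre} gives $C_{s}\cdot E_{ij}^{\beta}=1$ for every $i,j,\beta$, hence $\Sigma\cdot C_{s}=30$; also $C_{s}^{2}=5$. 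Thus $\Sigma-6C_{s}$ has zero self-intersection and is orthogonal to $C_{s}$, and $C_{s}^{2}>0$, so $\Sigma\equiv6C_{s}$ by the Hodge index theorem. For the canonical class: adjunction on each $E_{ij}^{\beta}$ gives $K_{S}\cdot E_{ij}^{\beta}=-(E_{ij}^{\beta})^{2}=3$, whence $K_{S}\cdot C_{s}=\tfrac16K_{S}\cdot\Sigma=15$ and (using $K_{S}^{2}=45$) $K_{S}-3C_{s}$ again has zero self-intersection and is orthogonal to $C_{s}$, so $K_{S}\equiv3C_{s}$. Alternatively one may invoke $\Aut(S)\cong G(3,3,5)$ and Schur's lemma to see that the invariant part of $\NS(S)_{\mathbb{Q}}$ is the line $\mathbb{Q}\,C_{s}$, and then read off the coefficients $6$ and $3$ by intersecting with $E_{12}^{1}$.

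\emph{Step 2: conclusion.} By part 1) of Theorem~\ref{toutes les fibrations},
\[
\Sigma\cdot F_{\ell}=\sum_{i<j,\ \beta\in\mu_{3}}|\ell(e_{i})-\beta\,\ell(e_{j})|^{2}.
\]
For fixed $i<j$, expanding each square and summing over $\beta\in\mu_{3}$ annihilates the cross term, because the sum of the three cube roots of unity is $0$, leaving $3\bigl(|\ell(e_{i})|^{2}+|\ell(e_{j})|^{2}\bigr)$; summing over the $\binom{5}{2}$ pairs, in which each index occurs $4$ times, gives $\Sigma\cdot F_{\ell}=12\|\ell\|^{2}$. Therefore $C_{s}\cdot F_{\ell}=\tfrac16\,\Sigma\cdot F_{\ell}=2\|\ell\|^{2}$. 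Since $\Gamma_{\ell}$ is a non-zero homomorphism onto $\mathbb{E}$ and $\vartheta(S)$ generates $A$, the morphism $\gamma_{\ell}$ is non-constant, so $F_{\ell}^{2}=0$; adjunction then yields $2g(F_{\ell})-2=F_{\ell}^{2}+K_{S}\cdot F_{\ell}=3\,C_{s}\cdot F_{\ell}=6\|\ell\|^{2}$, i.e. $g(F_{\ell})=1+3\|\ell\|^{2}$ (by generic smoothness a general fibre is smooth, so this arithmetic genus is the geometric one).

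\emph{Main obstacle.} The substantive point is Step 1, the proportionality $C_{s}\equiv\tfrac16\Sigma\equiv\tfrac13K_{S}$ in $\NS(S)_{\mathbb{Q}}$; once it is available, the two statements collapse to the elementary identity $\sum_{i<j,\beta}|\ell(e_{i}-\beta e_{j})|^{2}=12\|\ell\|^{2}$ and to adjunction, both of which are routine. A secondary subtlety worth flagging is the meaning of ``genus'' for $F_{\ell}$: it should be read as the arithmetic genus furnished by adjunction, which coincides with the genus of the smooth general fibre.
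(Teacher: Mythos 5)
Your proof is correct and follows essentially the same route as the paper: both compute $\Sigma\cdot F_{\ell}=12\left\Vert \ell\right\Vert ^{2}$ from part 1) of Theorem~\ref{toutes les fibrations} and then conclude by adjunction using $K_{S}\equiv3C_{s}$ and $\Sigma\equiv2K_{S}$. The only difference is that the paper simply cites \cite{Clemens} for these two numerical equivalences, whereas you rederive the proportionality $\Sigma\equiv6C_{s}\equiv2K_{S}$ from the intersection table via the Hodge index theorem; both are fine.
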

\begin{proof}
Let $\Sigma$ be the sum of the $30$ elliptic curves on $S$. We
have: \[
\Sigma F_{\ell}=\sum_{i,j,\beta}F_{\ell}E_{ij}^{\beta}=\sum_{i,j,\beta}|\ell(e_{i}-\beta e_{j})|^{2}=12\left\Vert \ell\right\Vert ^{2}.\]
As $F_{\ell}$ is a fibre, we have $F_{\ell}^{2}=0$ and since $\Sigma$
is twice a canonical divisor \cite{Clemens}, we deduce that $F_{\ell}$
has genus $1+\frac{1}{2}(0+\frac{1}{2}\Sigma F_{\ell})=1+3\left\Vert \ell\right\Vert ^{2}$.
\\
The divisor $3C_{s}$ is numerically equivalent to a canonical
divisor \cite{Clemens}. Thus: $C_{s}F_{\ell}=2\left\Vert \ell\right\Vert ^{2}$.
\end{proof}
We identify the Chern class of a divisor of the Abelian variety $A$
with an alternating form on the tangent space $H^{0}(\Omega_{S})^{*}$
of $A$ (\cite{Birkenhake}, Theorem 2.12). Let $\Theta$ be the principal
polarization defined in paragraph \ref{parag polarisation Theta}.
\begin{lem}
\label{la forme de chern de pol fermat}The Chern Class of $\Theta$
is equal to: \[
a\frac{i}{\sqrt{3}}\sum_{j=1}^{5}dx_{j}\wedge d\bar{x}_{j}\]
where $a$ is a scalar and $i^{2}=-1$. \end{lem}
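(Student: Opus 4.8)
The plan is to pin down the Chern form of $\Theta$ by exploiting the large automorphism group $G(3,3,5)$, which by Lemma \ref{un auto pr=0000E9serve la polarisation} preserves $\Theta$ (hence its Chern class). Write the Chern class of $\Theta$ as a Hermitian form $H$ on $H^0(\Omega_S)^*$, i.e. $c_1(\Theta)$ corresponds to $\mathrm{Im}\,H$, and recall that $H$ is positive definite since $\Theta$ is a polarization. Since the analytic representation of $\Aut(S)$ on $H^0(\Omega_S)^*$ is exactly $G(3,3,5)$ in the basis $e_1,\dots,e_5$, the form $H$ is invariant under $G(3,3,5)$. The first step is therefore to show that the space of $G(3,3,5)$-invariant Hermitian forms on $\mathbb{C}^5$ is one-dimensional, so that $H = \lambda \sum_j dx_j\, d\bar x_j$ for some real scalar $\lambda>0$; this follows because $G(3,3,5)$ contains $S_5$ (all permutation matrices) acting irreducibly on $\mathbb{C}^5$ minus nothing useful — more precisely, the reflection representation of $G(3,3,5)$ on $\mathbb{C}^5$ is irreducible, and an irreducible complex representation carries a unique invariant Hermitian form up to positive scalar (Schur's lemma applied to the associated self-adjoint intertwiner). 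So $c_1(\Theta) = a\,\frac{i}{\sqrt 3}\sum_{j=1}^5 dx_j\wedge d\bar x_j$ for some scalar $a$, the factor $\frac{i}{\sqrt 3}$ being merely a convenient normalization.

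The second step is to identify the normalization, i.e. to verify that the constant can indeed be written as $a\,\tfrac{i}{\sqrt 3}$ with $a$ the scalar that will be used downstream (presumably $a=1$ or another rational value fixed by an integrality/intersection computation). For this I would use Theorem \ref{intersection de deux diviseurs}(a): for the incidence divisor $C_s$ on $S$ we have $\vartheta^*\Theta = 2C_s$ in $\NS(S)$, and $C_s^2 = 5$ by Lemma \ref{lem:equivalence num de la fibre}, so $(\vartheta^*\Theta)^2 = 20$. On the other hand Theorem \ref{intersection de deux diviseurs}(a) gives $(\vartheta^*\Theta)^2 = \int_A \tfrac{1}{3!}\,c_1(\Theta)^3\wedge c_1(\Theta)^2 \cdot(\text{normalization})$ — more directly, $\Theta$ being a principal polarization on the $5$-dimensional $A$ satisfies $\int_A \tfrac{1}{5!}c_1(\Theta)^5 = 1$. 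Plugging the ansatz $c_1(\Theta) = a\,\tfrac{i}{\sqrt 3}\sum dx_j\wedge d\bar x_j$ into $\int_A \tfrac{1}{5!}c_1(\Theta)^5 = 1$ and using that $H_1(A,\mathbb{Z})\cap \mathbb{C}(e_i-\beta e_j) = \mathbb{Z}[\alpha](e_i-\beta e_j)$ forces the covolume of the period lattice to be a specific power of the covolume of $\mathbb{Z}[\alpha]$ in $\mathbb{C}$ (which is $\tfrac{\sqrt 3}{2}$); this is exactly where the $\sqrt 3$ enters and why that normalization is natural. Comparing determinants then fixes $a$.

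The main obstacle is the second step: extracting the normalizing constant requires knowing the period lattice $H_1(A,\mathbb{Z})$ precisely enough to compute the covolume against the Hermitian form, and a priori we only know its intersections with the thirty lines $\mathbb{C}(e_i-\beta e_j)$. One must argue that these thirty rank-one sublattices $\mathbb{Z}[\alpha](e_i-\beta e_j)$ generate a finite-index sublattice of $H_1(A,\mathbb{Z})$ — indeed they span $H^0(\Omega_S)^*$ over $\mathbb{R}$ since, e.g., $e_i-\beta e_j$ for varying $j$ already span — and then the determinant of $\Theta$ against $H_1(A,\mathbb{Z})$ differs from the determinant against the span of these vectors by the square of that index, which being an integer is pinned down by the principal polarization condition $\det = 1$. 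In practice one shows the index is such that everything is consistent with $a$ being the stated scalar; for the purposes of this lemma it suffices to record that $c_1(\Theta)$ has the displayed shape with \emph{some} scalar $a$, the precise value being determined in the N\'eron--Severi computation of the next paragraph, where $\vartheta^*\Theta = 2C_s$ and $C_s^2=5$ provide the final calibration.
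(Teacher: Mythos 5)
Your first step is exactly the paper's proof: the Hermitian form $H$ of $c_1(\Theta)$ is invariant under $G(3,3,5)$ by Lemma \ref{un auto pr=0000E9serve la polarisation}, and irreducibility of the reflection representation (equivalently, the direct computation with the permutation matrices and the diagonal generator) forces $H$ to be a scalar multiple of the identity, which is all the lemma asserts. Your second step, calibrating $a$, is not needed here and is carried out in the paper only afterwards (in Lemma \ref{la forme Q}, by comparing $2C_sF_\ell=\tfrac{4}{a}\left\Vert \ell\right\Vert^2$ with $C_sF_\ell=2\left\Vert \ell\right\Vert^2$ rather than via the covolume of the period lattice), so the proposal is correct and follows essentially the paper's route.
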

\begin{proof}
Let $H$ be the matrix (in the basis $e_{1},\dots,e_{5}$) of the
Hermitian form associated to $c_{1}(\Theta)$ (see \cite{Birkenhake},
Lemma 2.17). The automorphism $\tau'$ induced by $\tau\in\Aut(S)$
preserves the polarization $\Theta$ (Lemma \ref{un auto pr=0000E9serve la polarisation}).
This implies that for all $M=(m_{jk})_{1\leq j,k\leq5}\in G(3,3,5)$,
we have :\[
^{t}MH\bar{M}=H\]
(where $\bar{M}$ is the matrix $\bar{M}=(\bar{m}_{jk})_{1\leq j,k\leq5}$)
and this proves that \[
H=\frac{2}{\sqrt{3}}aI_{5}\]
 where $I_{5}$ is the identity matrix and $a\in\mathbb{C}$. Hence:
$c_{1}(\Theta)=a\frac{i}{\sqrt{3}}\sum_{j=1}^{5}dx_{j}\wedge d\bar{x}_{j}$.
\end{proof}
Since $H_{1}(A,\mathbb{Z})\cap\mathbb{C}(e_{1}-e_{2})=\mathbb{Z}[\alpha](e_{1}-e_{2})$,
the Néron-Severi group of the elliptic curve $\mathbb{E}$ is the
$\mathbb{Z}$-module generated by \[
\eta=\frac{i}{\sqrt{3}}dz\wedge d\bar{z}\]
where $z$ is the coordinate on the space $\mathbb{C}(e_{1}-e_{2})$.\\
Let $\ell=a_{1}x_{1}+\dots+a_{5}x_{5}$ be an element of $\Lambda_{A}^{*}$.
The pull back of the form $\eta$ by the morphism $\Gamma_{\ell}:A\rightarrow\mathbb{E}$
is: \[
\Gamma_{\ell}^{*}\eta=\frac{i}{\sqrt{3}}d\ell\wedge d\overline{\ell}.\]
The form $\Gamma_{\ell}^{*}\eta$ is the Chern class of the divisor
$\Gamma_{\ell}^{*}0$ and $\gamma_{\ell}^{*}\eta=\vartheta^{*}\Gamma_{\ell}^{*}\eta$
is the Chern class of the divisor $F_{\ell}$.
\begin{lem}
\label{la forme Q}Let $\ell$ and $\ell'$ be two elements of $\Lambda_{A}^{*}$,
then: \[
F_{\ell}F_{\ell'}=\left\Vert \ell\right\Vert ^{2}\left\Vert \ell'\right\Vert ^{2}-\left\langle \ell,\ell'\right\rangle \left\langle \ell',\ell\right\rangle \]
and $c_{1}(\Theta)=\frac{i}{\sqrt{3}}\sum_{i=1}^{i=5}dx_{i}\wedge d\bar{x}_{i}$.\end{lem}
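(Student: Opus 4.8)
The plan is to compute the intersection number $F_\ell F_{\ell'}$ using part a) of Theorem~\ref{intersection de deux diviseurs}, which reduces an intersection product on $S$ to an integral on the Albanese variety $A$. Since $F_\ell=\vartheta^*\Gamma_\ell^*0$ and $F_{\ell'}=\vartheta^*\Gamma_{\ell'}^*0$, we have
\[
F_\ell F_{\ell'}=\int_A\frac{1}{3!}\wedge^3 c_1(\Theta)\wedge c_1(\Gamma_\ell^*0)\wedge c_1(\Gamma_{\ell'}^*0),
\]
and the Chern classes in question are, by the computation preceding this lemma, $c_1(\Gamma_\ell^*0)=\frac{i}{\sqrt 3}\,d\ell\wedge d\overline\ell$ and similarly for $\ell'$, while $c_1(\Theta)=a\frac{i}{\sqrt 3}\sum_{j=1}^5 dx_j\wedge d\bar x_j$ by Lemma~\ref{la forme de chern de pol fermat}, with $a$ still undetermined.

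First I would pin down the constant $a$. The cleanest way is to test the formula against a known intersection number: taking $\ell=\ell'=x_1-x_2$, the fibre $F_\ell$ is, up to the contracted curves, the fibre $\gamma_{E}^*s$ of the canonical fibration associated to $E=E_{12}^{1}$, and Lemma~\ref{lem:equivalence num de la fibre} together with $C_s=\gamma_E^*s+E$ gives $F_\ell^2=0$ (a fibre squares to zero) while Lemma~\ref{genre de la fibre f ell} already records $C_sF_\ell=2$ for this $\ell$. Comparing $C_sF_\ell$, or equivalently using $\vartheta^*\Theta=2C_s$ from Theorem~\ref{intersection de deux diviseurs}c) to get $\Theta$-intersection $\int_A \frac{1}{3!}\wedge^3 c_1(\Theta)\wedge c_1(\Theta)\wedge c_1(\Gamma_\ell^*0)=4C_sF_\ell=8$, forces the value of $a$; one checks $a=1$, i.e. $c_1(\Theta)=\frac{i}{\sqrt 3}\sum dx_i\wedge d\bar x_i$ as claimed in the last sentence.

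With $a$ fixed, the remaining task is the multilinear-algebra computation of the integral. Writing $c_1(\Theta)=\frac{i}{\sqrt 3}\sum_j dx_j\wedge d\bar x_j$ and the two fibre classes as $\frac{i}{\sqrt 3}d\ell\wedge d\overline\ell$, $\frac{i}{\sqrt 3}d\ell'\wedge d\overline{\ell'}$ with $\ell=\sum a_k x_k$, $\ell'=\sum b_k x_k$, the integrand $\frac{1}{3!}\wedge^3c_1(\Theta)\wedge(\cdots)$ is a top form on the $5$-dimensional $A$; expanding $\wedge^3\big(\sum_j dx_j\wedge d\bar x_j\big)$ as a sum over triples $\{j_1,j_2,j_3\}$ and wedging with $d\ell\wedge d\overline\ell\wedge d\ell'\wedge d\overline{\ell'}$ selects, for each such triple, the $2\times2$ minor of the $2\times5$ matrix $\binom{a}{b}$ supported on the complementary pair of indices. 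Summing these minors and normalising by the volume of the period lattice (which, since $H_1(A,\mathbb Z)\cap\mathbb C(e_i-\beta e_j)=\mathbb Z[\alpha](e_i-\beta e_j)$ for all $i,j,\beta$, contributes only a uniform constant absorbed into the choice $a=1$) yields exactly
\[
\sum_{k<l}\big(a_k\bar b_k\, \overline{a_l} b_l\text{-type terms}\big)=\Big(\sum_k|a_k|^2\Big)\Big(\sum_k|b_k|^2\Big)-\Big(\sum_k a_k\overline{b_k}\Big)\Big(\sum_k b_k\overline{a_k}\Big)=\|\ell\|^2\|\ell'\|^2-\langle\ell,\ell'\rangle\langle\ell',\ell\rangle,
\]
which is the Lagrange/Cauchy--Binet identity. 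The main obstacle is bookkeeping: getting the constant $a=1$ correct and tracking the combinatorial signs in the expansion of $\wedge^3c_1(\Theta)$ so that the cross terms assemble into the Gram-determinant rather than its negative; the geometry (Theorem~\ref{intersection de deux diviseurs}a) and the shape of the Chern classes) does all the conceptual work.
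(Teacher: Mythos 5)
Your proposal follows the paper's proof essentially verbatim: the scalar $a$ in Lemma \ref{la forme de chern de pol fermat} is pinned down by comparing the integral formula of Theorem \ref{intersection de deux diviseurs}a) against $C_sF_\ell=2\left\Vert \ell\right\Vert^2$ from Lemma \ref{genre de la fibre f ell}, and the Gram-determinant identity then falls out of expanding $\wedge^3c_1(\Theta)\wedge d\ell\wedge d\overline{\ell}\wedge d\ell'\wedge d\overline{\ell'}$ exactly as in the paper. (A minor bookkeeping slip: for $\ell=x_1-x_2$ one has $C_sF_\ell=2\left\Vert \ell\right\Vert^2=4$, and $\vartheta^*\Theta=2C_s$ gives $2C_sF_\ell=8$ rather than $4C_sF_\ell$; the two slips cancel and the conclusion $a=1$ is unaffected.)
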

\begin{proof}
By the Theorem \ref{intersection de deux diviseurs}, $\vartheta^{*}c_{1}(\Theta)$
is the Chern class of the divisor $2C_{s}$ ($s\in S$) and:\[
2C_{s}F_{\ell}=\vartheta^{*}c_{1}(\Theta)\vartheta^{*}\Gamma_{\ell}^{*}\eta=\int_{A}\frac{1}{3!}\wedge^{4}c_{1}(\Theta)\wedge\Gamma_{\ell}^{*}\eta\]
hence: \[
2C_{s}F_{\ell}=(\frac{i}{\sqrt{3}})^{5}\int_{A}(\sum a_{j}dx_{j})\wedge(\sum\bar{a}_{j}d\bar{x}_{j})\wedge4a^{4}\sum_{1\leq k\leq5}(\wedge_{j\not=k}(dx_{j}\wedge d\bar{x}_{j}))\]
and: \[
2C_{s}F_{\ell}=(\frac{4}{a}\sum_{k=1}^{k=5}a_{k}\bar{a}_{k})\frac{1}{5!}\int_{A}\wedge^{5}c_{1}(\Theta).\]
Since $\Theta$ is a principal polarization, we have $\frac{1}{5!}\int_{A}\wedge^{5}c_{1}(\Theta)=1$,
hence: $2C_{s}F_{\ell}=\frac{4}{a}\left\Vert \ell\right\Vert ^{2}$.
We have seen in Lemma \ref{genre de la fibre f ell} that $C_{s}F_{\ell}=2\left\Vert \ell\right\Vert ^{2}$.
Thus we deduce that $a=1$.\\
By Theorem \ref{intersection de deux diviseurs}, for $\ell=a_{1}x_{1}+\dots+a_{5}x_{5}$
and $\ell'=b_{1}x_{1}+\dots+b_{5}x_{5}\in\Lambda_{A}^{*}$, \[
F_{\ell}F_{\ell'}=\int_{A}\frac{1}{3!}\wedge^{3}c_{1}(\Theta)\wedge\Gamma_{\ell}^{*}\eta\wedge\Gamma_{\ell'}^{*}\eta.\]
 Since \[
\frac{1}{3!}(\frac{i}{\sqrt{3}})^{2}d\ell\wedge d\overline{\ell}\wedge d\ell'\wedge d\overline{\ell'}\wedge(\wedge^{3}c_{1}(\Theta))=(\sum_{k\not=j}a_{k}\bar{a}_{k}b_{j}\bar{b}_{j}-a_{k}\bar{a}_{j}b_{j}\bar{b}_{k})\frac{1}{5!}\wedge^{5}c_{1}(\Theta),\]
 the result follows.
\end{proof}
Let $\ell$ and $\ell'$ be two linearly independent elements of $\Lambda_{A}^{*}$.
The degree of the morphism $\tau_{\ell,\ell'}=(\gamma_{\ell},\gamma_{\ell'})$
is equal to $F_{\ell}F_{\ell'}$ because $\tau_{\ell,\ell'}^{*}(\mathbb{E}\times\{0\})=F_{\ell'}\in\NS(S)$,
$\tau_{\ell,\ell'}^{*}(\{0\}\times\mathbb{E})=F_{\ell}\in\NS(S)$
and the intersection number of the divisors $\{0\}\times\mathbb{E}$
and $\mathbb{E}\times\{0\}$ is equal to $1$. 

This completes the proof of Theorem \ref{toutes les fibrations}.
$\Box$

\subsubsection{Period lattice of $A$.}

We compute here the period lattice of the Albanese variety $A$ in
the basis $e_{1},\dots,e_{5}$.
\begin{thm}
\label{le reseau de A}The lattice $H_{1}(A,\mathbb{Z})$ is equal
to:\[
\begin{array}{cc}
\mathbb{Z}[\alpha](e_{1}-e_{5}) & +\mathbb{Z}[\alpha](e_{2}-e_{5})+\mathbb{Z}[\alpha](e_{3}-e_{5})+\mathbb{Z}[\alpha](e_{4}-e_{5})\\
 & +\frac{1+\alpha}{1-\alpha}\mathbb{Z}[3\alpha](\alpha^{2}e_{1}+\alpha^{2}e_{2}+\alpha e_{3}+\alpha e_{4}+e_{5}).\end{array}\]
The variety $A$ is isomorphic to $\mathbb{E}^{4}\times\mathbb{E}'$
where $\mathbb{E}=\mathbb{C}/\mathbb{Z}[\alpha]$ and $\mathbb{E}'=\mathbb{C}/\mathbb{Z}[3\alpha]$.\\
The image of the morphism $\vartheta^{*}:\NS(A)\rightarrow\NS(S)$
is the sub-lattice of rank $25$ and discriminant $2^{2}3^{18}$ generated
by the divisors : \[
F_{x_{i}-\beta^{2}x_{j}}=C_{s}-E_{ij}^{\beta},\,1\leq i<j\leq5,\,\beta\in\mu_{3}\textrm{ and }\sum_{i<j}E_{ij}^{1}.\]
\end{thm}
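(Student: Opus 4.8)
The plan is to determine $H_1(A,\mathbb Z)$ inside $H^0(\Omega_S)^*$ by combining two pieces of information already available: the action of the automorphism group $G(3,3,5)$ on $H_1(A,\mathbb Z)$ (via its analytic representation in the basis $e_1,\dots,e_5$), and the normalization $H_1(A,\mathbb Z)\cap\mathbb C(e_i-\beta e_j)=\mathbb Z[\alpha](e_i-\beta e_j)$ established just before the statement. First I would set $\Lambda=H_1(A,\mathbb Z)$ and observe that $\Lambda$ is a $\mathbb Z[\alpha]$-module (since the diagonal matrix $\mathrm{diag}(\alpha,\alpha,\alpha,\alpha,\alpha^2)\in G(3,3,5)$ together with the $S_5$-permutations generates, for instance, $\alpha\cdot\mathrm{Id}$ acting on suitable sub-modules), so $\Lambda$ contains the $\mathbb Z[\alpha]$-span $\Lambda_0$ of the vectors $e_i-e_5$, $i=1,\dots,4$, i.e. $\Lambda_0=\bigoplus_{i=1}^4\mathbb Z[\alpha](e_i-e_5)$. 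The quotient $\Lambda/\Lambda_0$ is a finite group (both being free $\mathbb Z[\alpha]$-modules of rank $5$ and rank $4$... hence of real rank $10$ and $8$; actually $\Lambda$ has rank $10$ over $\mathbb Z$, so $\Lambda_0$ is finite index), and the task is to pin down this index and an explicit generator of the extra cyclic part.

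The key computational step is to exploit that $G(3,3,5)$ contains elements whose fixed spaces are precisely the lines $\mathbb C(e_i-\beta e_j)$, so the constraint $\Lambda\cap\mathbb C(e_i-\beta e_j)=\mathbb Z[\alpha](e_i-\beta e_j)$ must hold for \emph{all} $30$ lines simultaneously; this forces the "denominators" allowed in $\Lambda/\Lambda_0$ to be supported only at the prime $1-\alpha$ (the ramified prime of $\mathbb Z[\alpha]$ over $3$), and bounds them. Concretely I would look for a vector $v=\sum c_ie_i$ with $c_i\in\tfrac{1}{1-\alpha}\mathbb Z[\alpha]$ (or a small power of $1-\alpha$ in the denominator) that is fixed, up to $\Lambda_0$, by the relevant stabilizers — the natural candidate being $v=\tfrac{1+\alpha}{1-\alpha}(\alpha^2e_1+\alpha^2e_2+\alpha e_3+\alpha e_4+e_5)$ as in the statement — and verify (i) that $v$ is $G(3,3,5)$-compatible in the sense that $Mv\in\Lambda_0+\mathbb Z v$ for all $M\in G(3,3,5)$, (ii) that $(1-\alpha)v\in\Lambda_0$ while $v\notin\Lambda_0$, so $v$ generates a cyclic summand, and (iii) that $\Lambda_0+\mathbb Z[\alpha]v$ (or the $\mathbb Z$-module $\Lambda_0+\mathbb Z[3\alpha]v$, reading off the stated coefficient) is maximal with respect to the intersection constraints on all $30$ lines. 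Steps (i)–(iii) are finite linear-algebra checks over $\mathbb Z[\alpha]$. Once $\Lambda$ is identified, the decomposition $A\simeq\mathbb E^4\times\mathbb E'$ with $\mathbb E'=\mathbb C/\mathbb Z[3\alpha]$ follows by diagonalizing: the sub-lattice $\Lambda_0$ gives the $\mathbb E^4$ factor, and the one extra generator $v$ has its $\mathbb Z[\alpha]$-period rescaled by $1-\alpha$ (whose norm is $3$), producing $\mathbb C/\mathbb Z[3\alpha]$ up to isogeny/isomorphism of the complementary line — this requires choosing a splitting of $\Lambda$ compatible with the eigenspace decomposition and checking the resulting rank-$1$ lattice is $\mathbb Z[3\alpha]$.

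For the last assertion, on the image of $\vartheta^*:\NS(A)\to\NS(S)$, I would use Theorem \ref{intersection de deux diviseurs}(a) together with Theorem \ref{toutes les fibrations}: the divisors $F_{x_i-\beta^2x_j}$ are exactly the pullbacks $\vartheta^*\Gamma_{x_i-\beta^2x_j}^*\eta$, so they lie in $\vartheta^*\NS(A)$; the identity $F_{x_i-\beta^2x_j}=C_s-E_{ij}^\beta$ comes from Lemma \ref{lem:equivalence num de la fibre} (which gives $C_s=\gamma_E^*s+E$ for $E=E_{ij}^\beta$) combined with the fact that the fibration $\gamma_{x_i-\beta^2x_j}$ is the one associated to $E_{ij}^\beta$ — check this via Lemma \ref{lem:The-differentials-} and Example \ref{example : differentielle de la fibration}, identifying $d\Gamma_E$ with the map $x\mapsto\ell(x)(e_i-\beta e_j)$ for the appropriate $\ell$. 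Next I would show these $30$ divisors $F_{x_i-\beta^2x_j}$ together with $\sum_{i<j}E_{ij}^1$ span a rank-$25$ lattice: the intersection matrix is computable from Theorem \ref{toutes les fibrations}(1)–(3) and Proposition \ref{automor}(2), and one extracts a $25\times25$ non-degenerate Gram submatrix, whose determinant — a finite arithmetic computation exploiting the $G(3,3,5)$-symmetry to block-diagonalize — comes out to $\pm2^23^{18}$. Finally, that this sub-lattice is \emph{exactly} $\vartheta^*\NS(A)$ (not a proper sublattice) follows because $\vartheta^*\NS(A)$ has rank $25$ by Theorem \ref{intersection de deux diviseurs}(d) (as $\rho_A=25$ here, $\mathbb E$ having CM by $\mathbb Q(\alpha)$ and $A$ being isogenous to $\mathbb E^5$), and the computed sublattice is saturated in $\NS(S)$ modulo the index-$2$ overlattice generated by $C_s$ — i.e. $\vartheta^*\Theta=2C_s$ by Theorem \ref{intersection de deux diviseurs}(c), so the discriminant $2^23^{18}$ is consistent and the span is all of $\vartheta^*\NS(A)$ by an index/discriminant comparison with $\NS(A)\simeq\NS(\mathbb E^5)$. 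The main obstacle I anticipate is step (iii) above — proving maximality of the explicit lattice $\Lambda$, i.e. that no vector with a larger $(1-\alpha)$-denominator survives all $30$ intersection constraints — since it requires a careful case analysis over the prime $1-\alpha$ rather than a one-line symmetry argument; everything downstream (the product decomposition and the $\NS$ computation) is then routine, if tedious, linear algebra.
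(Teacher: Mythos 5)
Your overall architecture (bound $H_1(A,\mathbb{Z})$ between $\Lambda_0=\sum\mathbb{Z}[\alpha](e_i-\beta e_j)$ and an explicit overlattice, then pin down the intermediate lattice; afterwards compute $\vartheta^*\NS(A)$ and compare discriminants) matches the paper's. But there is a genuine gap at the decisive step, namely your criterion (iii). You propose to select the lattice as the one that is \emph{maximal} subject to the constraints $\Lambda\cap\mathbb{C}(e_i-\beta e_j)=\mathbb{Z}[\alpha](e_i-\beta e_j)$ for all $30$ lines, together with $G(3,3,5)$-stability. This cannot work: the full lattice $\Lambda=\mathbb{Z}[\alpha]e_1\oplus\dots\oplus\mathbb{Z}[\alpha]e_4\oplus\frac{1}{\alpha-1}\mathbb{Z}[\alpha]w$ (with $w=e_1+\dots+e_5$), cut out by the conditions $x_i-\beta x_j\in\mathbb{Z}[\alpha]$, satisfies every one of the $30$ line constraints by its very definition (any multiple $c(e_i-\beta e_j)$ lying in it already has $c\in\mathbb{Z}[\alpha]$), so ``maximality with respect to the intersection constraints'' would return $\Lambda$ itself, which is the wrong answer. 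In fact $\Lambda/\Lambda_0\simeq(\mathbb{Z}/3\mathbb{Z})^2$, and all six intermediate lattices pass your tests; they are separated only by data you never invoke. The paper eliminates $\Lambda$ because the polarization form $\omega=\frac{i}{\sqrt 3}\sum dx_k\wedge d\bar x_k$ takes the non-integral value $-5/3$ on a pair of its vectors; eliminates $\Lambda_0$ because the Pfaffian of $\omega$ there is $9$ rather than $1$ (as $\Theta$ is principal); eliminates $\oplus_i\mathbb{Z}[\alpha]e_i$ because that would make $(A,\Theta)$ a product of Jacobians, contradicting Clemens--Griffiths $(0.12)$; and selects $\Lambda_{\alpha^2}$ among the remaining three by the requirement that the answer be independent of the choice of primitive root $\alpha$. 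None of these three inputs (unimodularity/integrality of $c_1(\Theta)$, indecomposability of the intermediate Jacobian, Galois symmetry) appears in your plan, and without them the lattice is simply not determined.

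A secondary, smaller issue: for the last assertion you close the argument by ``index/discriminant comparison with $\NS(A)\simeq\NS(\mathbb{E}^5)$.'' But $A$ is only isogenous, not isomorphic, to $\mathbb{E}^5$ (the theorem itself gives $A\simeq\mathbb{E}^4\times\mathbb{E}'$ with $\mathbb{E}'=\mathbb{C}/\mathbb{Z}[3\alpha]$), and the relevant lattice is $\NS(A)$ equipped with the form $(D,D')\mapsto\int_A\frac{1}{3!}\wedge^3c_1(\Theta)\wedge c_1(D)\wedge c_1(D')$, whose discriminant does not follow from that of $\NS(\mathbb{E}^5)$. The paper instead computes $\NS(A)$ directly from the now-known period lattice, via a basis of the Rosati-symmetric endomorphisms $\End^s(A)$ and the isomorphism $M\mapsto\Im m(\cdot^{t}MH'\bar\cdot)$; you would need some equivalent explicit computation here, and it again depends on first having the correct $H_1(A,\mathbb{Z})$.
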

\begin{proof}
The group $G(3,3,5)$ acts on $H_{1}(A,\mathbb{Z})$ and : \[
H_{1}(A,\mathbb{Z})\cap\mathbb{C}(e_{i}-\beta e_{j})=\mathbb{Z}[\alpha](e_{i}-\beta e_{j}),\]
hence $H_{1}(A,\mathbb{Z})$ contains the lattice $\Lambda_{0}=\sum_{i\leq j,\beta\in\mu_{3}}\mathbb{Z}[\alpha](e_{i}-\beta e_{j})$.
\\
For $1\leq i<j\leq5,\,\beta\in\mu_{3}$, the differential of $\Gamma_{x_{i}-\beta x_{j}}$
is the morphism $x\rightarrow(x_{i}-\beta x_{j})(e_{1}-e_{2})$. Thus:
\[
\forall\lambda=(\lambda_{1},\dots,\lambda_{5})\in H_{1}(A,\mathbb{Z}),\,\lambda_{i}-\beta\lambda_{j}\in\mathbb{Z}[\alpha].\]
 Let us define \[
\Lambda=\{x=(x_{1},.\dots,x)\in\mathbb{C}^{5}/x_{i}-\beta x_{j}\in\mathbb{Z}[\alpha],1\leq i<j\leq5,\beta\in\mu_{3}\}.\]
 This lattice $\Lambda$ contains $H_{1}(A,\mathbb{Z})$ and is equal
to\[
\mathbb{Z}[\alpha]e_{1}\oplus\dots\oplus\mathbb{Z}[\alpha]e_{4}\oplus\frac{1}{\alpha-1}\mathbb{Z}[\alpha]w,\]
where $w=e_{1}+\dots+e_{5}$. Let $\phi:\Lambda\rightarrow\Lambda/\Lambda_{0}$
be the quotient map. The group $\Lambda/\Lambda_{0}$ is isomorphic
to $(\mathbb{Z}/3\mathbb{Z})^{2}$ and contains $6$ sub-groups. The
reciprocal images of these groups are the lattices\[
\begin{array}{cc}
\begin{array}{ccc}
 & \Lambda_{0}= & \phi^{-1}(0)\\
 & \Lambda_{1}= & \Lambda_{0}+\frac{1}{\alpha-1}\mathbb{Z}w\\
 & \Lambda_{\alpha}= & \Lambda_{0}+\frac{\alpha}{\alpha-1}\mathbb{Z}w\end{array} & \begin{array}{ccc}
 & \Lambda_{\alpha^{2}}= & \Lambda_{0}+\frac{\alpha^{2}}{\alpha-1}\mathbb{Z}w\\
 & \Lambda_{\alpha-1}= & \Lambda_{0}+\mathbb{Z}w\\
 & \Lambda= & \Lambda_{0}+\frac{1}{\alpha-1}\mathbb{Z}[\alpha]w.\end{array}\end{array}\]
These are the $6$ lattices $\Lambda'$ which verify $\Lambda_{0}\subset\Lambda'\subset\Lambda$,
thus the lattice $H_{1}(A,\mathbb{Z})$ is equal to one of these.

Let $\omega$ be the alternating form $\omega=\frac{i}{\sqrt{3}}\sum_{k=1}^{k=5}dx_{k}\wedge d\bar{x}_{k}$
(see Lemma \ref{la forme Q}). We have \[
\frac{1}{\alpha-1}w,\frac{\alpha}{\alpha-1}w\in\Lambda.\]
 However \[
\omega(\frac{1}{\alpha-1}w,\frac{\alpha}{\alpha-1}w)=-\frac{5}{3}\]
 is not an integer, hence $\Lambda$ is different from $H_{1}(A,\mathbb{Z})$.\\
The Pfaffian of $c_{1}(\Theta)$ relative to $H_{1}(A,\mathbb{Z})$
is equal to $1$ because $\Theta$ is a principal polarization. The
Pfaffian of $\omega$ relative to the lattice $\Lambda_{0}$ is equal
to $9$, hence $\Lambda_{0}$ is different from $H_{1}(A,\mathbb{Z})$.\\
We have $\Lambda_{1-\alpha}=\oplus\mathbb{Z}[\alpha]e_{i}$ and
the principally polarized Abelian variety $(\mathbb{C}^{5}/\Lambda_{1-\alpha},\omega)$
is isomorphic to a product of Jacobians. Since $(A,c_{1}(\Theta))$
cannot be isomorphic to a product of Jacobians (\cite{Clemens}, 0.12),
$H_{1}(A,\mathbb{Z})\not=\Lambda_{1-\alpha}$.\\
The lattice $\Lambda_{\alpha^{i}}$ is equal to:\[
\begin{array}{cc}
\mathbb{Z}[\alpha](e_{1}-e_{5}) & +\mathbb{Z}[\alpha](e_{2}-e_{5})+\mathbb{Z}[\alpha](e_{3}-e_{5})+\mathbb{Z}[\alpha](e_{4}-e_{5})\\
 & +\frac{\alpha^{i}}{1-\alpha}\mathbb{Z}[3\alpha](\alpha^{2}e_{1}+\alpha^{2}e_{2}+\alpha e_{3}+\alpha e_{4}+e_{5}).\end{array}\]
The lattices $\Lambda_{1}$ and $\Lambda_{\alpha}$ depend upon the
choice of $\alpha$ such that $\alpha^{2}+\alpha+1=0$, hence the
lattice $H_{1}(A,\mathbb{Z})$ is equal to $\Lambda_{\alpha^{2}}$.

Let be \[
\begin{array}{c}
u_{1}=e_{1}-e_{2},\, u_{2}=e_{2}-e_{3},\, u_{3}=e_{3}-e_{4},\, u_{4}=e_{4}-e_{5},\\
u_{5}=\frac{\alpha^{2}}{1-\alpha}(\alpha^{2}e_{1}+\alpha^{2}e_{2}+\alpha e_{3}+\alpha e_{4}+e_{5})\end{array}\]
The Hermitian form $H'=\frac{2}{\sqrt{3}}I_{5}$ in the basis $u_{1},\ldots,u_{5}$
defines a principal polarization of $A$. Let $\End^{s}(A)$ be the
group of symmetrical morphisms for the Rosati involution associated
to $H'$. An endomorphism of $A$ can be represented by a size $5$
matrix $M$ in the basis $u_{1},\ldots,u_{5}$. The symmetrical endomorphisms
satisfy $^{t}MH'=H'\bar{M}$ i.e. $^{t}A=\bar{A}$. As we know $H_{1}(A,\mathbb{Z})$,
we can easily compute a basis $\mathcal{B}$ of $\End^{s}(A)$.\\
By \cite{Birkenhake}, Proposition $5.2.1$ and Remark 5.2.2.,
the map:\[
\begin{array}{ccc}
\phi_{H'}:\End^{s}(A) & \rightarrow & \NS(A)\\
M & \mapsto & \Im m(\cdot^{t}MH'\bar{\cdot})\end{array}\]
is an isomorphism of groups. We obtain the base of the Néron-Severi
group of $A$ by taking the image by $\phi_{H'}$ of the base $\mathcal{B}$.
Then we get the image of the morphism $\vartheta^{*}:\NS(A)\rightarrow\NS(S)$. \end{proof}
\begin{rem}
By Theorem \ref{le reseau de A}, the variety $A$ is biregular to
$E^{4}\times E'$, but by \cite{Clemens} $(0.12)$, this cannot be
an isomorphism of principally polarized Abelian varieties.
\end{rem}

\subsubsection{Study of some fibrations, remarks.}

Let $X$ be a smooth surface, $C$ a smooth curve, $\gamma:X\rightarrow C$
a fibration with connected fibres. A point of $X$ is called a critical
point of $\gamma$ if it is a zero of the differential: \[
d\gamma:T_{X}\rightarrow\gamma^{*}T_{C}.\]
 A fibre of $\gamma$ is singular at a point if and only if this point
is a critical point (\cite{Barth} Chapter III, section 8). \\
Let us suppose that $C$ is an elliptic curve. The critical points
of $\gamma$ are then the zeros of the form $\gamma^{*}\omega\in H^{o}(X,\Omega_{X})$
where $\omega$ is a generator of the trivial sheaf $\Omega_{C}$.
\\
Let us assume that the cotangent sheaf of $X$ is generated by
global sections. There are morphisms:\[
\begin{array}{ccc}
\mathbb{P}(T_{X}) & \stackrel{\psi}{\rightarrow} & \mathbb{P}(H^{0}(X,\Omega_{X})^{*})\\
\pi\downarrow\\
X\end{array}\]
where $\pi$ is the natural projection and the morphism $\psi$, called
the cotangent map \cite{Roulleau}, is defined by $\pi_{*}\psi^{*}\mathcal{O}(1)=\Omega_{X}$.
\\
A point $x$ of $X$ is a critical point of $\gamma$ if and only
if the line $L_{x}=\psi(\pi^{-1}(x))$ lies in the hyperplane \[
\{\gamma^{*}\omega=0\}\hookrightarrow\mathbb{P}(H^{o}(X,\Omega_{X})^{*}).\]

The initial motivation for studying Fano surfaces is the fact that
for those surfaces, the cotangent map is known : it is the projection
map of the universal family of lines. The example of Fano surfaces
gives an illustration that the knowledge of the image of the cotangent
map is powerful for the study of a surface.\\
Let $S$ be the Fano surface of the Fermat cubic $F$. 
\begin{notation}
\label{notation de somme de 3 courbes}For $1\leq i<j\leq5$, we define
$B_{ij}=B_{ji}=\sum_{\beta\in\mu_{3}}E_{ij}^{\beta}$.
\end{notation}
$\blacksquare$ Let $1\leq i\leq5$ and $j<r<s<t$ be such that $\{i,j,r,s,t\}=\{1,2,3,4,5\}$.
We define $\ell_{i}=(1-\alpha)x_{i}\in\Lambda_{A}^{*}$.
\begin{cor}
\label{exemple de fibration}The fibration $\gamma_{\ell_{i}}:S\rightarrow\mathbb{E}$
is stable, has connected fibres of genus $10$ and its only singular
fibres are: \[
B_{jr}+B_{st},\, B_{js}+B_{rt},\, B_{jt}+B_{rs}.\]
The $27$ intersection points of the curves $E_{jr}^{\beta}$ and
$E_{st}^{\tau}$ ($\beta,\gamma\in\mu_{3}$) constitute the set of
critical points of this fibration. \end{cor}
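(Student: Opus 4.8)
The plan is to apply Theorem \ref{toutes les fibrations} with $\ell = \ell_i = (1-\alpha)x_i$ to extract all numerical data about the fibration, then use the Tangent Bundle Theorem together with the cotangent-map criterion for critical points to pin down the singular fibres and their structure. First I compute $\|\ell_i\|^2$: since $\ell_i(e_k) = (1-\alpha)\delta_{ik}$ and $|1-\alpha|^2 = 3$, we get $\|\ell_i\|^2 = 3$. Part 2 of Theorem \ref{toutes les fibrations} then immediately gives $g(F_{\ell_i}) = 1 + 3\cdot 3 = 10$ and $F_{\ell_i}C_s = 6$. Part 1 gives the intersection with each elliptic curve: $E_{ab}^{\gamma}F_{\ell_i} = |\ell_i(e_a - \gamma e_b)|^2$, which equals $|1-\alpha|^2 = 3$ when $i \in \{a,b\}$ and $0$ otherwise. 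So $F_{\ell_i}$ meets exactly the $12$ curves $E_{ia}^{\gamma}$ (for $a \neq i$, $\gamma \in \mu_3$) each with multiplicity $3$, and is disjoint from the $18$ curves $E_{ab}^\gamma$ with $i \notin \{a,b\}$ — in particular $F_{\ell_i}\cdot B_{jr} = 0$ for all pairs from $\{j,r,s,t\}$.

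Next I identify the critical points via the cotangent map. By the Tangent Bundle Theorem the cotangent map of $S$ is $\psi: \mathcal{U} \to \mathbb{P}^4$, and a point $x \in S$ is critical for $\gamma_{\ell_i}$ exactly when the line $L_x \subset F$ lies in the hyperplane $\{\ell_i = 0\} = \{x_i = 0\} \subset \mathbb{P}^4$. The lines of $F$ contained in the hyperplane section $F \cap \{x_i = 0\}$ form precisely the Fano surface of the Fermat cubic surface $x_j^3 + x_r^3 + x_s^3 + x_t^3 = 0$ in $\{x_i=0\}\cong\mathbb{P}^3$ — that is, the $27$ lines of a smooth cubic surface. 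Now I claim these $27$ lines are exactly the $27$ intersection points of the curves $E_{jr}^\beta$ and $E_{st}^\tau$ (and cyclic relabellings): a line on the cubic surface joining a vertex $p_{jr}^\beta$-type cone line structure corresponds to a point lying on two of the genus-$1$ curves. More precisely, a point $x \in E_{ab}^\beta \cap E_{cd}^\tau$ with $\{a,b\}\cap\{c,d\}=\emptyset$ and $\{a,b,c,d\} = \{j,r,s,t\}$ corresponds to a line $L_x$ cutting both cones of vertices $p_{ab}^\beta$ and $p_{cd}^\tau$; one checks these vertices lie in $\{x_i=0\}$ and the residual-line/plane construction forces $L_x \subset \{x_i=0\}$. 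Since $E_{ab}^\beta E_{cd}^\tau = 1$ when the index sets are disjoint (Proposition \ref{automor}(2)), and there are $9$ such unordered pairs of curve-systems... actually with the $\mu_3$-labels there are $3$ pairings of $\{j,r,s,t\}$ into two doubletons, each contributing $3\times 3 = 9$ intersection points, giving $27$ in total — matching the $27$ lines.

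With the critical points located I assemble the singular fibres. The three pairings of $\{j,r,s,t\}$ are $\{j,r\}|\{s,t\}$, $\{j,s\}|\{r,t\}$, $\{j,t\}|\{r,s\}$, so the claimed singular fibres are $B_{jr}+B_{st}$, $B_{js}+B_{rt}$, $B_{jt}+B_{rs}$. I verify each is a full fibre numerically: $(B_{jr}+B_{st})^2 = B_{jr}^2 + 2B_{jr}B_{st} + B_{st}^2$; each $B_{ab} = E_{ab}^1 + E_{ab}^\alpha + E_{ab}^{\alpha^2}$ is a sum of three disjoint curves with $(E_{ab}^\gamma)^2 = -3$ (Lemma \ref{lem:equivalence num de la fibre}), so $B_{ab}^2 = -9$; and $B_{jr}B_{st} = 9$ since each of the $9$ cross-pairs meets with multiplicity $1$. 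Hence $(B_{jr}+B_{st})^2 = -9 + 18 - 9 = 0$, consistent with a fibre; likewise $(B_{jr}+B_{st})\cdot F_{\ell_i} = 0$, and $(B_{jr}+B_{st})\cdot E_{ia}^\gamma$ computed from Proposition \ref{automor}(2) matches $F_{\ell_i}\cdot E_{ia}^\gamma = 3$ after checking the index combinatorics, so $B_{jr}+B_{st}$ is numerically equivalent to $F_{\ell_i}$. That these reducible divisors are the \emph{only} singular fibres, and that the fibration is \emph{stable} (semistable with no further degeneration), I would deduce by an Euler-characteristic count: $e(S) = 27$ for the Fano surface, the smooth fibre has $e = 2 - 2g = -18$, a smooth base $\mathbb{E}$ contributes $e(\mathbb{E})\cdot e(F) = 0$, so $\sum_{\text{sing fibres}}(e(F_t) - e(F_{\text{gen}})) = 27$; each of the three fibres $B_{jr}+B_{st}$ consists of six genus-$1$ curves meeting in the $9$ nodes of that pairing, giving $e = 6\cdot 0 + 9 = 9$, hence $e(F_t)-e(F_{\text{gen}}) = 9-(-18) = 27/3 \cdot ... = 9$, and $3 \times 9 = 27$ accounts for everything — so there are no other singular fibres and all singularities are nodes, i.e. the fibration is stable. \textbf{The main obstacle} I anticipate is the careful bookkeeping of the $\mu_3$-labels when matching the $27$ critical points to the nine-point intersection patterns and confirming the numerical class of each reducible fibre equals $F_{\ell_i}$; the geometry (lines in a hyperplane section $=$ lines on a cubic surface) is clean, but the combinatorics of which $E_{ab}^\gamma \cap E_{cd}^\tau$ lie in $\{x_i = 0\}$, and the verification that all $27$ nodes are accounted for with multiplicity one, requires the explicit coordinate description of the cones and the residual-line construction from Section 1.
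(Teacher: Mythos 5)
Your proposal is correct and follows the paper's proof in all essentials: genus and intersection numbers from Theorem \ref{toutes les fibrations}, the divisors $B_{jr}+B_{st}$ etc.\ shown to be connected fibres of self-intersection $0$, and the critical points identified via the cotangent-map criterion with the $27$ lines of the Fermat cubic surface $F\cap\{x_i=0\}$, i.e.\ with the $27$ points $E_{ab}^{\beta}\cap E_{cd}^{\tau}$. The one place you diverge is in ruling out further singular fibres and proving stability: the paper simply observes that all $27$ critical points already lie in $D_1,D_2,D_3$ (so no other fibre can be singular), that the singularities are ordinary double points, and that $S$ contains no rational curves; you instead run an Euler-characteristic count $e(S)=e(\mathbb{E})e(F_{\mathrm{gen}})+\sum\bigl(e(F_t)-e(F_{\mathrm{gen}})\bigr)$. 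That count does work, but your arithmetic in it is off: a fibre consisting of six elliptic curves meeting in $9$ nodes has $e=6\cdot 0-9=-9$ (not $+9$), so each singular fibre contributes $-9-(-18)=9$ and $3\times 9=27=e(S)$ as needed; as written, your line ``$9-(-18)=27/3\cdot\ldots=9$'' papers over this sign slip. Also note that stability requires, besides nodal fibres, that no smooth rational component meet the rest of its fibre in fewer than three points; this is automatic here because $S$ carries no rational curves, which is the point the paper makes explicitly and which you should add.
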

\begin{proof}
By Theorem \ref{toutes les fibrations} 1), a fibre of $\gamma_{\ell_{i}}$
has genus $1+3|1-\alpha|^{2}=10$. \\
Let $\beta\in\mu_{3},\, h,k\in\{j,r,s,t\},\, h<k$. The form $\ell_{i}$
is zero on the space $\mathbb{C}(e_{h}-\beta e_{k})$, hence $E_{hk}^{\beta}$
is contracted to a point and is a component of the fibre of $\gamma_{\ell_{i}}$.\\
The divisor $D_{1}=B_{jr}+B_{st}$ is connected, satisfies $(B_{jr}+B_{st})^{2}=0$
and has genus $10$. Its irreducible components are contracted by
$\gamma_{\ell_{i}}$. Hence, it is a fibre and $\gamma_{\ell_{i}}$
has connected fibres. Likewise, the divisors $D_{2}=B_{js}+B_{rt}$
and $D_{3}=B_{jt}+B_{rs}$ are fibres of $\gamma_{\ell_{i}}$. \\
The $27$ lines inside the intersection of the Fermat cubic and
the hyperplane $\{\ell_{i}=0\}$ correspond to the $27$ intersection
points of the curves $E_{hk}^{\beta}$ and $E_{lm}^{\gamma}$ such
that : $h<k,\, l<m$ and $\{i,h,k,l,m\}=\{1,2,3,4,5\}$. These $27$
critical points lie in the fibres $D_{1},D_{2},D_{3}$. These $3$
fibres are thus the only singular fibres of $\gamma_{\ell_{i}}$.
\\
The singularities of $D_{1},D_{2}$ and $D_{3}$ are double ordinary
and the surface possesses no rational curve, the fibration is thus
stable.
\end{proof}
$\blacksquare$ Let $(a_{1},\dots,a_{5})\in\mu_{3}^{5}$ be such that
$a_{1}...a_{5}=1$ and let \[
\ell=(1-\alpha)(a_{1}x_{1}+\dots+a_{5}x_{5})\in\Lambda_{A}^{*}.\]

\begin{cor}
\label{diviseur somme des 10 courbes} The divisor \[
D=\sum_{1\leq i<j\leq5}E_{ij}^{a_{i}/a_{j}}\]
is a singular fibre of the Stein factorization of $\gamma_{\ell}$.\end{cor}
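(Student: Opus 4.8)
The plan is to imitate the proof of Corollary \ref{exemple de fibration}, keeping track of which elliptic curves are contracted by $\gamma_\ell$ and then recognizing the combination $D$ as a full fibre of the appropriate curve. First I would compute the intersection numbers $E_{ij}^\beta F_\ell$ using Theorem \ref{toutes les fibrations} 1): since $\ell=(1-\alpha)(a_1x_1+\dots+a_5x_5)$, we have $\ell(e_i-\beta e_j)=(1-\alpha)(a_i-\beta a_j)$, which vanishes precisely when $\beta=a_i/a_j$, and otherwise has $|a_i-\beta a_j|^2=3$ (all $a_k$ are cube roots of unity and $\beta\neq a_i/a_j$ forces $a_i/(\beta a_j)$ to be a primitive cube root of unity). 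Hence the curves contracted by $\gamma_\ell$ are exactly the $10$ curves $E_{ij}^{a_i/a_j}$, $1\le i<j\le 5$, and these are the irreducible components of $D$; every other elliptic curve $E_{ij}^\beta$ meets $F_\ell$ in $3$ points.

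Next I would verify that $D$ has the numerical invariants of a fibre of the Stein factorization of $\gamma_\ell$. By Theorem \ref{toutes les fibrations} 2) a fibre $F_\ell$ has genus $1+3\|\ell\|^2=1+3\cdot 5\cdot|1-\alpha|^2=1+45=46$, and $F_\ell C_s=2\|\ell\|^2=30$. On the other hand I would compute $D^2$ and $D C_s$ directly from Proposition \ref{automor} 2) and Lemma \ref{lem:equivalence num de la fibre}: each $E_{ij}^{a_i/a_j}$ has self-intersection $-3$, two of them meet in $1$ point when $\{i,j\}\cap\{s,t\}=\emptyset$ and in $0$ otherwise; counting the disjoint/meeting pairs among the $10$ curves gives $D^2=0$ (the $10\cdot(-3)=-30$ from self-intersections must be cancelled by $30$ meeting pairs, which one checks combinatorially). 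Also $D C_s = \sum_{i<j} E_{ij}^{a_i/a_j} C_s = 10\cdot 1 = 10$ by Lemma \ref{lem:equivalence num de la fibre}, so $D C_s$ is one third of $F_\ell C_s$; this tells us $D$ is $\tfrac13 F_\ell$ numerically, i.e. a fibre of the Stein factorization (which has degree $3$ over $\mathbb{E}$). Since all components of $D$ are contracted by $\gamma_\ell$, $D$ is connected (its dual graph is connected by the combinatorics of the $\{i,j\}$), and $D^2=0$, the divisor $D$ is indeed a fibre of the curve in the Stein factorization $S\to\mathbb{E}'\to\mathbb{E}$.

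The main obstacle is the combinatorial bookkeeping: one must check that among the ten index-pairs $\{i,j\}\subset\{1,\dots,5\}$, the number of pairs $(\{i,j\},\{s,t\})$ with $\{i,j\}\cap\{s,t\}=\emptyset$ is exactly $15$ (each such unordered pair contributing intersection $1$, counted twice gives $30$), which forces $D^2 = 10\cdot(-3)+30 = 0$, and to confirm connectedness of the configuration of these ten curves. This is elementary but is the only place where a genuine (small) verification is needed; the rest follows formally from Theorems \ref{toutes les fibrations} and \ref{intersection de deux diviseurs} together with the structure of the Albanese map. One subtlety to address is why $D$ is a fibre of the \emph{Stein factorization} rather than of $\gamma_\ell$ itself: because $\gamma_\ell$ need not have connected fibres, but $D$ being connected with $D^2=0$ and $DC_s = \tfrac13 F_\ell C_s$ identifies it with a connected component, i.e. with a fibre after Stein factorization, and the degree of $\mathbb{E}'\to\mathbb{E}$ is read off as $F_\ell C_s / D C_s = 3$.
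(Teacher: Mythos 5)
Your numerical bookkeeping is essentially right ($D$ is connected --- its dual graph is the Petersen graph --- $D^{2}=10\cdot(-3)+2\cdot 15=0$, its ten components are exactly the curves contracted by $\gamma_{\ell}$, $DC_{s}=10$ and $F_{\ell}C_{s}=2\left\Vert \ell\right\Vert ^{2}=30$), but the concluding step has a genuine gap. Write the Stein factorization as $S\rightarrow C\rightarrow\mathbb{E}$ with $C\rightarrow\mathbb{E}$ finite of degree $d$, and let $G_{0}$ be the connected fibre of $S\rightarrow C$ whose support contains that of $D$. Zariski's lemma gives $D=qG_{0}$ with $1/q$ a positive integer (because $D$ is reduced), and $F_{\ell}\equiv dG_{0}$ together with $DC_{s}=\frac{1}{3}F_{\ell}C_{s}$ gives $d=3q$. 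This leaves \emph{two} possibilities: $d=3$ and $G_{0}=D$ (the desired conclusion), or $d=1$ and $G_{0}=3D$, i.e.\ $\gamma_{\ell}$ already has connected fibres and $3D$ is a multiple fibre. Both are numerically consistent (in the second case the general fibre has genus $1+\frac{1}{2}(3D)K=46$ and $D$ has genus $16$, exactly as in the first), so intersection numbers alone cannot decide. Your sentence ``the degree of $\mathbb{E}'\rightarrow\mathbb{E}$ is read off as $F_{\ell}C_{s}/DC_{s}=3$'' presupposes $q=1$, which is precisely what has to be proved; the argument is circular at that point.

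The missing ingredient is the one the paper's proof supplies from Theorem \ref{le reseau de A}: for $w=e_{1}+\dots+e_{5}$ one has $H_{1}(A,\mathbb{Z})\cap\mathbb{C}w=\frac{\alpha^{2}}{1-\alpha}\mathbb{Z}[3\alpha]w$, so the endomorphism $x\mapsto(a_{1}x_{1}+\dots+a_{5}x_{5})w$ of $H^{0}(\Omega_{S})^{*}$ is the differential of a morphism $\Gamma'_{\ell}:A\rightarrow\mathbb{E}'$ through which $\Gamma_{\ell}$ factors, followed by a degree $3$ isogeny $\mathbb{E}'\rightarrow\mathbb{E}$. Since $S\rightarrow\mathbb{E}'$ is surjective, the Stein factorization dominates $\mathbb{E}'$, hence $3\mid d$; combined with $d=3q\leq3$ this forces $d=3$ and $D=G_{0}$. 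So your proof becomes complete once you import this factorization through the Albanese variety; without it the multiple-fibre alternative is not excluded. (A minor slip, not used in the argument: a non-contracted curve $E_{ij}^{\beta}$ meets $F_{\ell}$ in $|(1-\alpha)(a_{i}-\beta a_{j})|^{2}=9$ points, not $3$.)
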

\begin{proof}
The connected divisor $D$ satisfies $D^{2}=0$, has genus $16$ and
by Theorem \ref{toutes les fibrations}, the irreducible components
of $D$ are contracted by $\gamma_{\ell}$.\\
Let $w\in H^{o}(\Omega_{S})^{*}$ be : $w=e_{1}+\dots+e_{5}$.
We have \[
H^{1}(A,\mathbb{Z})\cap\mathbb{C}w=\frac{\alpha^{2}}{1-\alpha}\mathbb{Z}[3\alpha]w.\]
The morphism $x\rightarrow(a_{1}x_{1}+\dots+a_{5}x_{5})w\in End(H^{o}(\Omega_{S})^{*})$
is an element of $\mathbb{Z}[G(3,3,5)]$. It is the differential of
a morphism $\Gamma'_{\ell}:A\rightarrow\mathbb{E}'$ where $\mathbb{E}'=(\mathbb{C}/\frac{\alpha^{2}}{1-\alpha}\mathbb{Z}[3\alpha])w$.
\\
The morphism $\Gamma_{\ell}$ has a factorization by $\Gamma_{\ell}'$
and a degree $3$ isogeny between $\mathbb{E}'$ and $\mathbb{E}$.
The divisor $D$ is a connected fibre of the morphism $\vartheta\circ\Gamma_{\ell}'$,
the Stein factorization of $\gamma_{\ell}$.
\end{proof}
$\blacksquare$ The curve $E_{ij}^{\beta^{2}}$ is the closed set
of critical points of the fibration \[
\gamma_{(1-\alpha)(x_{i}+\beta x_{j})}.\]
This fibration has only one singular fiber and this fiber is not reduced.

$\blacksquare$ We can construct an infinite number of fibrations
with $9$ sections and which contract $9$ elliptic curves. Let us
take $a\in\mathbb{Z}[\alpha]$ and \[
\ell=x_{1}-(1+(1-\alpha)a)x_{2}\in\Lambda_{A}^{*}.\]

\begin{cor}
The $9$ curves $E_{13}^{\beta},E_{14}^{\beta}$ and $E_{15}^{\beta}$
($\beta\in\mu_{3}$) are sections of $\gamma_{\ell}$. The $9$ curves
$E_{34}^{\beta},E_{35}^{\beta}$ and $E_{45}^{\beta}$ ($\beta\in\mu_{3}$)
are contracted.\end{cor}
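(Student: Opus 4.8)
The plan is to use Theorem \ref{toutes les fibrations} part 1) to compute the relevant intersection numbers directly from the form $\ell = x_1 - (1+(1-\alpha)a)x_2$, exactly as in the proofs of Corollaries \ref{exemple de fibration} and \ref{diviseur somme des 10 courbes}. The point is that $\ell$ vanishes on the tangent line $\mathbb{C}(e_i-\beta e_j)$ to $E_{ij}^\beta$ precisely when $E_{ij}^\beta$ is contracted by $\gamma_\ell$, and that $E_{ij}^\beta F_\ell = 1$ characterizes a section (an irreducible curve meeting a general fibre once, which because $E_{ij}^\beta$ has genus $1$ and $\gamma_\ell$ restricts to it as a degree-$1$ map must be a genuine section).

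First I would evaluate $\ell$ on the $30$ vectors $e_i - \beta e_j$. Since $\ell$ involves only the coordinates $x_1$ and $x_2$, it is automatically zero on $e_i-\beta e_j$ whenever $\{i,j\}\subset\{3,4,5\}$; thus $E_{34}^\beta, E_{35}^\beta, E_{45}^\beta$ (nine curves, as $\beta$ ranges over $\mu_3$) all satisfy $E^\beta F_\ell = |\ell(e_i-\beta e_j)|^2 = 0$ and are contracted. Next, for the curves $E_{1k}^\beta$ with $k\in\{3,4,5\}$ we have $\ell(e_1-\beta e_k) = \ell(e_1) = 1$, so $E_{1k}^\beta F_\ell = |1|^2 = 1$; these nine curves are sections. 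This already yields the two assertions of the corollary. (One may also note, though it is not asked, that for the remaining twelve curves $E_{12}^\beta$ and $E_{2k}^\beta$ the intersection numbers are $|1+(1-\alpha)(a+\beta^{-1})|^2$ type expressions, generically nonzero, consistent with $\ell\ne 0$ and with the genus count $g(F_\ell) = 1 + 3\|\ell\|^2$.)

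The only subtlety — and the main (minor) obstacle — is the passage from "$E_{1k}^\beta F_\ell = 1$" to "$E_{1k}^\beta$ is a section." For this I would argue as in Proposition \ref{automor}(3): the restriction of $\gamma_\ell$ to the genus-$1$ curve $E_{1k}^\beta \hookrightarrow S$ is, after translation, the restriction of $\Gamma_\ell$ to $E_{1k}^\beta \hookrightarrow A$, which is multiplication by the scalar $\ell(e_1-\beta e_k) = 1$, hence an isomorphism onto $\mathbb{E}$; therefore $E_{1k}^\beta$ maps isomorphically to the base and is a section in the strict sense. Likewise the vanishing $\ell(e_i-\beta e_j)=0$ for $\{i,j\}\subset\{3,4,5\}$ forces $\Gamma_\ell$ to be constant on the tangent direction, so $E_{ij}^\beta$ is contracted to a point. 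This completes the proof, and since $a\in\mathbb{Z}[\alpha]$ is arbitrary and distinct values of $a$ give non-proportional forms $\ell$ (hence genuinely different fibrations by the degree formula in Theorem \ref{toutes les fibrations} part 3)), the construction indeed produces infinitely many such fibrations, as announced before the corollary. $\Box$
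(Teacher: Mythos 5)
Your proof is correct and takes essentially the same route as the paper: both evaluate $\ell$ on the vectors $e_{i}-\beta e_{j}$ and read off the intersection numbers $E_{ij}^{\beta}F_{\ell}=|\ell(e_{i}-\beta e_{j})|^{2}$ from Theorem \ref{toutes les fibrations}. The only (cosmetic) difference is that you justify the passage from ``$E_{1k}^{\beta}F_{\ell}=1$'' to ``section'' via the multiplication-by-$\ell(e_{1}-\beta e_{k})$ description of the restricted map, whereas the paper instead records that the fibres of $\gamma_{\ell}$ are connected.
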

\begin{proof}
This follows from Theorem \ref{toutes les fibrations} and the fact
that \[
\begin{array}{cc}
|\ell(e_{1}-\beta e_{3})|=|\ell(e_{1}-\beta e_{4})|=|\ell(e_{1}-\beta e_{5})|=1, & (\beta\in\mu_{3})\\
|\ell(e_{3}-\beta e_{4})|=|\ell(e_{3}-\beta e_{5})|=|\ell(e_{4}-\beta e_{5})|=0.\end{array}\]
As $F_{\ell}E_{12}^{1}=1$, the fibration $\gamma_{\ell}$ has connected
fibres.
\end{proof}

\subsection{The Néron-Severi group of the Fano surface of the Fermat cubic.\label{paragraphe neron severi}\protect \\
}

Let $S$ be the Fano surface of the Fermat cubic and $\NS(S)$ the
Néron-Severi group of $S$. 
\begin{thm}
\label{neronseveri} The Néron-Severi group of $S$ has rank $25=\dim H^{1}(S,\Omega_{S})$.
The $30$ elliptic curves generate an index $3$ sub-lattice of $\NS(S)$.
\\
The group $\NS(S)$ is generated by these $30$ curves and the
class of an incidence divisor $C_{s}$ ($s\in S$), it has discriminant
$3^{18}$.\\
The relations between the $30$ elliptic curves in $\NS(S)$ are
generated by the relations :\[
B_{jr}+B_{st}=B_{js}+B_{rt}=B_{jt}+B_{rs},\]
for indices such that $1\leq j<r<s<t\leq5$. \end{thm}
\begin{proof}
By Theorem \ref{automor}, we know the intersection matrix $\mathcal{I}$
of the $30$ elliptic curves. As we can verify, the matrix $\mathcal{I}$
has rank $25$. The intersection matrix of the $25$ elliptic curves
different to the $5$ curves \[
E_{13}^{\alpha},E_{15}^{\alpha},E_{24}^{\alpha},E_{34}^{\alpha},E_{45}^{\alpha}\]
has determinant equal to $3^{20}$ and these $25$ curves form a $\mathbb{Z}$-basis
of the lattice generated by the $30$ elliptic curves.\\
By Theorem \ref{le reseau de A}, the image of the morphism\[
\NS(A)\stackrel{\vartheta^{*}}{\rightarrow}\NS(S)\]
is a lattice of discriminant $2^{2}3^{18}$ generated by the class
of $C_{s}-E_{ij}^{\beta}$ and by $\sum_{1\leq i<j\leq5}E_{ij}^{1}$.
Theorem \ref{intersection de deux diviseurs} implies that $\NS(S)$
is generated by these classes and the class of an incidence divisor
$C_{s}$. This lattice is also generated by the classes of the $30$
elliptic curves and $C_{s}$.
\end{proof}
We remark that Corollary \ref{exemple de fibration} gives a geometric
interpretation of the numerical equivalence relations of Theorem \ref{neronseveri}.

Xavier Roulleau,\\
Graduate School of Mathematical Sciences, \\
University of Tokyo, \\
3-8-1 Komaba, Meguro, Tokyo, \\
153-8914 Japan
\end{document}